\newcommand{\bk}{\Bbbk}
\newcommand{\Z}{\mathbb{Z}}
\newcommand{\F}{\mathbb{F}}
\renewcommand{\O}{\mathbb{O}}
\newcommand{\K}{\mathbb{K}}
\newcommand{\sA}{\mathscr{A}}
\newcommand{\bX}{\mathbf{X}}
\newcommand{\bXp}{\mathbf{X}^+}
\newcommand{\simto}{\overset{\sim}{\to}}
\newcommand{\id}{\mathrm{id}}
\newcommand{\op}{{\mathrm{op}}}
\newcommand{\Rep}{\mathrm{Rep}}
\DeclareMathOperator{\End}{End}
\DeclareMathOperator{\Hom}{Hom}
\DeclareMathOperator{\Ext}{Ext}
\DeclareMathOperator{\Irr}{Irr}
\DeclareMathOperator{\Ind}{Ind}
\newcommand{\ad}{\mathrm{ad}}
\newcommand{\uad}{\underline{\mathrm{ad}}}
\newcommand{\Spec}{\mathrm{Spec}}
\newcommand{\cO}{\mathcal{O}}
\def\lotimes{\@ifnextchar_{\@lotimessub}{\@lotimesnosub}}
\def\@lotimessub_#1{\mathchoice{\mathbin{\mathop{\otimes}^L}_{#1}}%
  {\otimes^L_{#1}}{\otimes^L_{#1}}{\otimes^L_{#1}}}
\def\@lotimesnosub{\mathbin{\mathop{\otimes}^L}}
\numberwithin{equation}{section}
\newtheorem{thm}{Theorem}[section]
\newtheorem{lem}[thm]{Lemma}
\newtheorem{prop}[thm]{Proposition}
\theoremstyle{definition}
\theoremstyle{remark}
\newtheorem{rmk}[thm]{Remark}
\title[Representation theory of disconnected reductive groups]{Representation theory of\\ disconnected reductive groups}
 \author{Pramod N. Achar}
 \address{Department of Mathematics\\
   Louisiana State University\\
   Baton Rouge, LA 70803\\
   U.S.A.}
 \email{pramod@math.lsu.edu}
  \author{William Hardesty}
   \address{Department of Mathematics\\
   Louisiana State University\\
   Baton Rouge, LA 70803\\
   U.S.A.}
  \email{whardesty@lsu.edu}
 \author{Simon Riche}
 \address{Universit\'e Clermont Auvergne, CNRS, LMBP, F-63000 Clermont-Ferrand, France.
 }
 \email{simon.riche@uca.fr}
\thanks{P.A. was supported by NSF Grant Nos.~DMS-1500890 and DMS-1802241. This project has received funding from
the European Research Council (ERC) under the European Union's Horizon 2020 research and
innovation programme (grant agreement No.~677147).}
\begin{document}
 
\begin{abstract}
We study three fundamental topics in the representation theory of disconnected algebraic groups whose identity component is reductive: (i)~the classification of irreducible representations; (ii)~the existence and properties of Weyl and dual Weyl modules; and  (iii)~the decomposition map relating representations in characteristic $0$ and those in characteristic $p$ (for groups defined over discrete valuation rings of mixed characteristic). For each of these topics, we obtain natural generalizations of the well-known results for connected reductive groups.
 \end{abstract}
 
\maketitle

\section{Introduction}
\label{sec:intro}

Let $G$ be a (possibly disconnected) affine algebraic group over an algebraically closed field $\bk$, and let $G^\circ$ be its identity component.  We call $G$ a \emph{(possibly) disconnected reductive group} if $G^\circ$ is reductive.  The goal of this paper is to extend a number of well-known foundational facts about connected reductive groups to the disconnected case.  

Such groups occur naturally, even when one is primarily interested in \emph{connected} reductive groups. Namely, for a connected reductive group $H$, the stabilizer $H^x$ of a nilpotent element in the Lie algebra of $H$ may be disconnected.  Let $H^x_{\mathrm{unip}}$ be its unipotent radical; then $H^x/H^x_{\mathrm{unip}}$ is a disconnected reductive group.  The study of (the derived category of) coherent sheaves on the nilpotent cone $\mathcal{N}$ of $H$, and in particular of \emph{perverse-coherent sheaves} on $\mathcal{N}$, leads naturally to questions about representations of $H^x/H^x_{\mathrm{unip}}$.  See~\cite{ahr} for some questions of this form, and for some applications of the results of this paper.

The present paper contains three main results:
\begin{enumerate}
\item
We classify the irreducible representations of $G$ in terms of those of $G^\circ$, via an adaptation of Clifford theory (Theorem~\ref{thm:disconn-class}).
\item
Assuming that the characteristic of $\bk$ does not divide $|G/G^\circ|$, we prove that the category of finite-dimensional $G$-modules has a natural structure of a highest-weight category (Theorem~\ref{thm:hw-structure}).  
\item
Starting from a disconnected reductive group scheme over a strictly Hen\-sel\-ian discrete valuation ring of mixed characteristic,
one obtains a ``decomposition map'' relating the Grothendieck groups of representations in characteristic $0$ and in characteristic $p$.  We prove that this map is an isomorphism.
\end{enumerate}
These results are certainly not surprising, and
some of them may be known to experts, but we are not aware of a reference that treats them in the detail and generality needed for the applications in~\cite{ahr}.

\subsection*{Acknowledgments}
We thank Jens Carsten Jantzen for a helpful conversation, and George Lusztig for sharing with us some unpublished notes on a topic related to the present paper.  We are also grateful to Torsten Wedhorn for explaining to us how to both simplify and extend the generality of Section~\ref{sec:Groth}.

\section{Classification of simple representations}
\label{sec:simples}

In this section we consider (affine) algebraic groups over an arbitrary
algebraically closed field $\bk$. Our goal is to describe the representation theory of a disconnected algebraic group $G$ whose neutral connected component $G^\circ$ is reductive in terms of the representation theory of $G^\circ$, via a kind of Clifford theory.

\subsection{Twist of a representation by an automorphism}
\label{app:twist}

Let $G$ be an algebraic group, $\varphi : G \simto G$ an automorphism, and let $\pi=(V,\varrho)$ be a representation of $G$. Then we define the representation ${}^\varphi \hspace{-1pt} \pi$ as the pair $(V,\varrho \circ \varphi^{-1})$. (Below, we will most of the time write $V$ for $\pi$, and ${}^\varphi \hspace{-1pt} V$ for ${}^\varphi \hspace{-1pt} \pi$.) It is straightforward to check that if $\psi : G \simto G$ is a second automorphism, then we have
\begin{equation}
\label{eqn:twist-composition}
{}^\psi \hspace{-1pt} \bigl( {}^\varphi \hspace{-1pt} \pi \bigr) = {}^{\psi \circ \varphi} \hspace{-1pt} \pi.
\end{equation}
If $f : \pi \to \pi'$ is a morphism of $G$-representations, then the same linear map defines a morphism of $G$-representations ${}^\varphi \hspace{-1pt} \pi \to {}^\varphi \hspace{-1pt} \pi'$, which will sometimes be denoted ${}^\varphi \hspace{-1pt} f$.

\begin{lem}
\label{lem:twist-Ind}
Let $H \subset G$ be a subgroup, and $(V,\varrho)$ be a representation of $H$. Then there exists a canonical isomorphism of $G$-modules
\[
{}^\varphi \hspace{-1pt} \Ind_H^G(V,\varrho) \cong \Ind_{\varphi(H)}^{G}(V, \varrho \circ \varphi^{-1}).
\]
\end{lem}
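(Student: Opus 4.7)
The plan is to construct the isomorphism explicitly at the level of functions. Writing the induced module in the usual function-theoretic form
\[
\Ind_H^G(V,\varrho) = \{ f : G \to V \mid f(hg) = \varrho(h) f(g) \text{ for all } h \in H,\ g \in G \},
\]
equipped with the right-translation $G$-action, the natural candidate for the isomorphism is
\[
\Phi : {}^\varphi \hspace{-1pt} \Ind_H^G(V,\varrho) \to \Ind_{\varphi(H)}^{G}(V, \varrho \circ \varphi^{-1}), \qquad \Phi(f) = f \circ \varphi^{-1}.
\]

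The verification then reduces to two routine steps. First, one checks that $\Phi(f)$ actually lies in the target induced module: writing a typical element of $\varphi(H)$ as $\varphi(h)$ with $h \in H$, pushing $\varphi^{-1}$ through $\varphi(h) g$ and applying the equivariance property of $f$ produces exactly the equivariance condition for the twisted representation $\varrho \circ \varphi^{-1}$ of $\varphi(H)$. Second, one checks $G$-equivariance. By the definition of twist in~\eqref{eqn:twist-composition} and the preceding paragraph, $g$ acts on the source by $f \mapsto \bigl(x \mapsto f(x\, \varphi^{-1}(g))\bigr)$, whereas $g$ acts on the target by the ordinary right translation $f' \mapsto \bigl( x \mapsto f'(xg) \bigr)$; these two actions are intertwined by $\Phi$ precisely because $\varphi^{-1}(xg) = \varphi^{-1}(x)\, \varphi^{-1}(g)$.

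Bijectivity is automatic since $\varphi$ is an automorphism, with two-sided inverse $f' \mapsto f' \circ \varphi$, and the construction involves no auxiliary choices, hence is canonical (and functorial in $V$). There is no real obstacle here: the only subtlety is the bookkeeping needed to see that the two twists (of the subgroup from $H$ to $\varphi(H)$, and of the representation from $\varrho$ to $\varrho \circ \varphi^{-1}$) cancel correctly in the target's equivariance condition. Once a convention for $\Ind$ is fixed, the argument is essentially forced.
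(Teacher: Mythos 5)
Your proof is correct and uses essentially the same map as the paper, namely $f \mapsto f \circ \varphi^{-1}$; the only difference is that you use the left-equivariance/right-translation convention for $\Ind_H^G$, whereas the paper (following Jantzen) uses right-equivariance with $G$ acting by left translation, but the same formula works in either convention.
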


\begin{proof}
By definition, we have
\begin{align*}
\Ind_H^G(V,\varrho) &= \{f : G \to V \mid \forall h \in H, \, f(gh)=\varrho(h^{-1})(f(g))\}, \\
\Ind_{\varphi(H)}^G(V,\varrho \circ \varphi^{-1}) &= \{f : G \to V \mid \forall h \in \varphi(H), \, f(gh)=\varrho \circ \varphi^{-1}(h^{-1})(f(g))\}.
\end{align*}
Here, in both cases the functions are assumed to be algebraic, and the $G$-action is defined by $(g \cdot f)(h)=f(g^{-1}h)$. We have a natural isomorphism of vector spaces
\[
\Ind_H^G(V,\varrho) \simto \Ind_{\varphi(H)}^G(V,\varrho \circ \varphi^{-1})
\]
sending $f$ to $f \circ \varphi^{-1}$. It is straightforward to check that this morphism is an isomorphism of $G$-modules from ${}^\varphi \hspace{-1pt} \Ind_H^G(V,\varrho)$ to $\Ind_{\varphi(H)}^{G}(V, \varrho \circ \varphi^{-1})$.
\end{proof}

\begin{rmk}
\label{rmk:twist-Ind}
More generally, if $G'$ is another algebraic group and $\varphi : G \simto G'$ is an isomorphism, for any $G$-module $\pi$ we can consider the $G'$ module ${}^\varphi \hspace{-1pt} \pi$ defined as above. Then the same arguments as for Lemma~\ref{lem:twist-Ind} show that we have ${}^\varphi \hspace{-1pt} \Ind_H^G(\pi) \cong \Ind_{\varphi(H)}^{G'}({}^\varphi \hspace{-1pt} \pi)$.
\end{rmk}

In particular, assume that we are given an algebraic group $G'$ and an embedding of $G$ as a normal subgroup of $G'$. Then for any $g \in G'$, we have an automorphism $\ad(g)$ of $G$ sending $h$ to $ghg^{-1}$. In this setting, we will write ${}^g \hspace{-1pt} V$ for ${}^{\ad(g)} \hspace{-1pt} V$, and ${}^g \hspace{-1pt} f$ for ${}^{\ad(g)} \hspace{-1pt} f$. Then for $g,h \in G'$, since $\ad(g) \circ \ad(h) = \ad(gh)$, \eqref{eqn:twist-composition} translates to ${}^g \hspace{-1pt} \bigl( {}^h \hspace{-1pt} V \bigr) = {}^{gh} \hspace{-1pt} V$.

The verification of the following lemma is straightforward.

\begin{lem}
\label{lem:twist-inner}
Let $(V,\varrho)$ be a representation of $G$. Then if $g \in G$, $\varrho(g^{-1})$ induces an isomorphism $V \simto {}^g \hspace{-1pt} V$. 
\end{lem}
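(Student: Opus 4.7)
The plan is to verify directly from the definitions that the linear automorphism $\varrho(g^{-1}) : V \to V$ intertwines the two $G$-actions involved. Since ${}^g V$ has underlying vector space $V$, and since $\varrho(g^{-1})$ is obviously a linear isomorphism (with inverse $\varrho(g)$), the only content of the lemma is the equivariance.

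First I would unpack the definitions. By the conventions fixed just before the statement, ${}^g V = {}^{\ad(g)} V = (V, \varrho \circ \ad(g)^{-1})$, so for $h \in G$ and $w \in V$ the action of $h$ on $w$ viewed as an element of ${}^g V$ is given by $\varrho(g^{-1}hg)(w)$.

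Then I would check equivariance of $\varrho(g^{-1}) : V \to {}^g V$: for every $h \in G$ and $v \in V$,
\[
\varrho(g^{-1}) \bigl( \varrho(h)(v) \bigr) = \varrho(g^{-1}h)(v) = \varrho(g^{-1}hg) \bigl( \varrho(g^{-1})(v) \bigr),
\]
where the left-hand side computes the action via $V$ followed by $\varrho(g^{-1})$, and the right-hand side computes $\varrho(g^{-1})$ followed by the action on ${}^g V$. Both sides coincide by associativity of $\varrho$, which is all that is needed.

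There is no real obstacle here: the statement is a formal consequence of the definition of ${}^g V$ and the fact that inner automorphisms of $G$ become inner automorphisms of the image in $\End(V)$ under any representation. It could even be phrased as the general principle that, for any representation $\pi = (V,\varrho)$ of a group $G$ and any $g \in G$, the element $\varrho(g^{-1})$ defines an isomorphism $\pi \simto {}^{\ad(g)} \hspace{-1pt} \pi$ in the category of $G$-representations. The authors presumably state it here to have a convenient reference for later use.
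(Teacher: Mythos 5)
Your proof is correct and is exactly the straightforward verification the paper alludes to without writing out (the paper simply states that the verification is straightforward). The computation $\varrho(g^{-1}h)(v) = \varrho(g^{-1}hg)\bigl(\varrho(g^{-1})(v)\bigr)$ correctly checks equivariance for the action $h \cdot w = \varrho(g^{-1}hg)(w)$ on ${}^g V$, using $\ad(g)^{-1}=\ad(g^{-1})$.
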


\subsection{Disconnected reductive groups}
\label{ss:disconn}

From now on we fix
an algebraic group $G$ whose identity component $G^\circ$ is reductive. 
We set $A := G/G^\circ$ (a finite group).  
The canonical quotient morphism $G \to A$ will be denoted $\varpi$.

Let $T$ be the ``universal maximal torus'' of $G^\circ$, i.e., the quotient $B/(B,B)$ for any Borel subgroup $B \subset G^\circ$. (Since all Borel subgroups in $G^\circ$ are $G^\circ$-conjugate, and since $B=N_{G^\circ}(B)$ acts trivially on $B/(B,B)$, the quotient $B/(B,B)$ 
does not depend on $B$, up to canonical isomorphism.)  Let $\bX = X^*(T)$ be its weight lattice.
If $T' \subset B$ is any maximal torus, then the composition $T' \hookrightarrow B \twoheadrightarrow T$ is an isomorphism, and this lets us identify $\bX$ with $X^*(T')$.  The image in $\bX$ under this identification of the roots of $(G,T')$, and of the subset of positive roots (chosen as the opposite of the $T'$-weights on the Lie algebra of $B$), do not depend on the choice of $T'$; so they define the canonical root system $\Phi \subset \bX$ and the subset $\Phi^+ \subset \Phi$ of positive roots. Similar comments apply to coroots, so that we can define the dominant weights $\bXp \subset \bX$. We denote by $W$ the Weyl group of $T$. (This group is well defined because $N_{B}(T')=T'$ for a maximal torus $T'$ contained in a Borel subgroup $B$.)
 
Given a weight $\lambda \in \bXp$, we denote by
\[
L(\lambda),\qquad \Delta(\lambda),\qquad \nabla(\lambda)
\]
the irreducible, Weyl, and dual Weyl $G^\circ$-modules, respectively, corresponding to $\lambda$. Here $\nabla(\lambda)$ is defined as the induced module $\Ind_{B}^{G^\circ}(\bk_B(\lambda))$ for some choice of Borel subgroup $B \subset G^\circ$, $L(\lambda)$ is the unique simple submodule of $\nabla(\lambda)$, and $\Delta(\lambda)$ is defined as $(\nabla(-w_0 \lambda))^*$, where $w_0 \in W$ is the longest element. (These modules do not depend on the choice of $B$ up to isomorphism thanks to Lemma~\ref{lem:twist-Ind} and Lemma~\ref{lem:twist-inner}.)

For any $g \in G$ and any Borel subgroup $B \subset G^\circ$, $\ad(g)$ induces an isomorphism $B/(B,B) \simto gBg^{-1}/(gBg^{-1}, gBg^{-1})$. Since $gBg^{-1}$ is also a Borel subgroup of $G^\circ$, this defines an automorphism $\uad(g)$ of $T$. Explicitly, we can choose an $h \in G^\circ$ such that $gBg^{-1} = hBh^{-1}$, and then for any element $b(B,B) \in B/(B,B) = T$, we set
\[
\uad(g)(b(B,B)) = h^{-1}gbg^{-1}h(B,B).
\]
It is straightforward to check that the right-hand side is independent of $h$. The fact that $T$ is well defined translates to the property that $\uad(g)=\id$ if $g \in G^\circ$, so that $\uad$ factors through a morphism $A \to \mathrm{Aut}(T)$, which we will also denote by $\uad$.

For $a \in A$ and $\lambda \in \bX$, we set 
\begin{equation}\label{eqn:combinatorial-action}
{}^a \hspace{-1pt} \lambda := \lambda \circ \uad(a^{-1}).
\end{equation}
This operation defines an action of $A$ on $\bX$.  Now let $g \in \varpi^{-1}(a) \subset G$, and let $T' \subset B$ be a maximal torus.  There is an $h \in G^\circ$ such that $gT'g^{-1} = hT'h^{-1}$ and $gBg^{-1} = hBh^{-1}$. Then $h^{-1}g$ normalizes $B$ and $(B,B)$.  If $x$ is a root vector for $T'$ in the Lie algebra of $(B,B)$, say with root $\lambda \in -\Phi^+$, then $\mathrm{Ad}(h^{-1}g)(x)$ is also a root vector with root ${}^a\hspace{-1pt}\lambda$.  This shows that the action of $A$ on $\bX$ preserves $\Phi^+$ and $\Phi$.  Similar reasoning shows that it preserves $\bXp$.
Moreover, Lemma~\ref{lem:twist-Ind} implies that for any $\lambda \in \bXp$ and $g \in G$, we have canonical isomorphisms
\begin{equation}
\label{eqn:twist-standard}
{}^g \hspace{-1pt} \Delta(\lambda) \cong \Delta({}^{\varpi(g)} \hspace{-1pt} \lambda), \quad
{}^g \hspace{-1pt} L(\lambda) \cong L({}^{\varpi(g)} \hspace{-1pt} \lambda), \quad
{}^g \hspace{-1pt} \nabla(\lambda) \cong \nabla({}^{\varpi(g)} \hspace{-1pt} \lambda).
\end{equation}

We will denote by $\Irr(G^\circ)$ the set of isomorphism classes of simple $G^\circ$-modules. This set admits an action of $G$, where $g$ acts via $[V] \mapsto [{}^g \hspace{-1pt} V]$. (Of course, this action factors through an action of $A$.) The constructions above provide a natural bijection $\bX^+ \simto \Irr(G^\circ)$ (sending $\lambda$ to the isomorphism class of $L(\lambda)$), which is $A$-equivariant in view of~\eqref{eqn:twist-standard}.

\begin{lem}
\label{lem:restrict-semis}
Let $V$ be an irreducible $G$-module.  Then $V$ is semisimple as a $G^\circ$-module.  All of its irreducible $G^\circ$-submodules lie in a single $G$-orbit in $\Irr(G^\circ)$.
\end{lem}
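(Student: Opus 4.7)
The plan is to produce natural $G$-stable subspaces of $V$ out of $G^\circ$-theoretic constructions, and then invoke irreducibility of $V$ as a $G$-module. The key preliminary observation is that for any $g \in G$ and any $G^\circ$-submodule $W \subset V$, the subspace $gW$ is again $G^\circ$-stable (because $g^{-1}hg \in G^\circ$ for $h \in G^\circ$, by normality of $G^\circ$), and the map $w \mapsto gw$ provides a $G^\circ$-module isomorphism ${}^g \hspace{-1pt} W \simto gW$. Consequently $gW$ is simple whenever $W$ is, and its isomorphism class in $\Irr(G^\circ)$ is determined by the action of $\varpi(g) \in A$ described after~\eqref{eqn:twist-standard}.

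For semisimplicity, I would let $V_0 \subseteq V$ denote the sum of all simple $G^\circ$-submodules of $V$. Any irreducible rational representation of an affine algebraic group is finite-dimensional, so $V$ is finite-dimensional and therefore contains at least one simple $G^\circ$-submodule; in particular $V_0 \neq 0$. The observation above implies that $gV_0$ is again a sum of simple $G^\circ$-submodules for every $g \in G$, hence $gV_0 \subseteq V_0$. Thus $V_0$ is a nonzero $G$-stable subspace of $V$, and irreducibility of $V$ as a $G$-module forces $V_0 = V$.

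For the orbit claim, I would pass to the isotypic decomposition $V = \bigoplus_{[L] \in \Irr(G^\circ)} V_{[L]}$ of $V$ regarded as a $G^\circ$-module. The same translation argument upgrades to $g \cdot V_{[L]} = V_{[{}^g \hspace{-1pt} L]}$, so $G$ permutes the isotypic components according to its action on $\Irr(G^\circ)$. For any $G$-orbit $\Omega \subset \Irr(G^\circ)$, the subspace $\bigoplus_{[L] \in \Omega} V_{[L]}$ is therefore a $G$-submodule of $V$, and irreducibility forces exactly one orbit $\Omega$ to contribute, which is precisely the conclusion. The only potentially delicate point is the identification $gW \cong {}^g \hspace{-1pt} W$ and the verification that the induced permutation of $\Irr(G^\circ)$ matches the $A$-action defined via $\uad$, but these are immediate from unraveling the definitions of ${}^\varphi \hspace{-1pt} \pi$ and of $\ad(g)$.
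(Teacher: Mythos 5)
Your proof is correct, and it follows a route that differs in its details from the one in the paper, although both are standard Clifford-theory arguments driven by the same two inputs: $G^\circ$-stability of $gW$ (normality) plus irreducibility of $V$ as a $G$-module. The paper picks a single simple $G^\circ$-submodule $M$, sums the finitely many translates $g_i M$ over coset representatives to produce a $G$-stable subspace that must equal $V$, and then observes that the resulting surjection $\bigoplus_i g_i M \twoheadrightarrow V$ simultaneously forces $V$ to be $G^\circ$-semisimple with constituents in the single orbit of $[M]$. You instead argue in two canonical steps: first that the $G^\circ$-socle of $V$ is $G$-stable (hence all of $V$), and then that $G$ permutes the $G^\circ$-isotypic components according to the $A$-action, so that each orbit-sum is a $G$-submodule and only one orbit can appear. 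Your version is slightly more conceptual and separates the two conclusions cleanly; the paper's version is more economical in that both conclusions drop out of a single surjection. Either works without modification in this setting, since $G/G^\circ$ is finite and $V$ is finite-dimensional.
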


\begin{proof}
Choose an irreducible $G^\circ$-submodule $M \subset V$, and choose a set of coset representatives $g_1, \ldots, g_r$ for $G^\circ$ in $G$.  The subspace
\[
\sum_{i=1}^r g_iM \subset V
\]
is stable under the action of $G$, so it must be all of $V$.  Each summand $g_iM$ is stable under $G^\circ$, so there is a surjective map of $G^\circ$-representations
\[
\bigoplus_{i=1}^r g_i M \to \sum_{i=1}^r g_iM = V.
\]
Now, $g_iM$ is isomorphic as a $G^\circ$-module to ${}^{g_i} \hspace{-1pt} M$; in particular, each $g_iM$ is an irreducible $G^\circ$-module, and $\bigoplus_i g_iM$ is semisimple.  Thus, as a $G^\circ$-module, $V$ is a quotient of a semisimple module, all of whose summands lie in a single $G$-orbit of $\Irr(G^\circ)$, so the same holds for $V$ itself.
\end{proof}

\subsection{The component group and induced representations}
\label{ss:component}

For each $a \in A = G/G^\circ$, let us choose, once and for all, a representative $\iota(a) \in G$.  In the special case $a = 1_A$, we require that
\[
\iota(1_A) = 1_G.
\]
Given $a,b \in A$, the representative $\iota(ab)$ need not be equal to $\iota(a)\iota(b)$; but these elements lie in the same coset of $G^\circ$.  Explicitly, there is a unique element $\gamma(a,b) \in G^\circ$ such that
\[
\iota(a)\iota(b) = \iota(ab)\gamma(a,b).
\]
Our assumption on $\iota(1_A)$ implies that for any $a \in A$, we have
\[
\gamma(1_A,a) = \gamma(a,1_A) = 1_G.
\]
By expanding $\iota(abc)$ in two ways, one finds that
\begin{equation}
\label{eqn:gamma-product}
\gamma(ab,c) \cdot \ad(\iota(c)^{-1})(\gamma(a,b)) = \gamma(a,bc)\gamma(b,c).
\end{equation}

Now let $V$ be a $G^\circ$-module.  By Lemma~\ref{lem:twist-inner}, for any $a,b \in A$ the action of $\gamma(a,b)$ defines an isomorphism of $G^\circ$-modules
\[
{}^{\gamma(a,b)} \hspace{-1pt} V \simto V.
\]
Twisting by $\iota(ab)$ we deduce an isomorphism
\[
\phi_{a,b} : {}^{\iota(a)\iota(b)} \hspace{-1pt} V \simto {}^{\iota(ab)} \hspace{-1pt} V.
\]

We can use the maps $\iota$ and $\gamma$ to explicitly describe representations of $G$ that are induced from $G^\circ$, as follows. Let us denote by $\bk[A]$ the group algebra of $A$ over $\bk$. Let $V$ be a $G^\circ$-module, and consider the vector space
\begin{equation}\label{eqn:induced-explicit}
\tilde V = \bk[A] \otimes V = \bigoplus_{f \in A} \bk f \otimes V.
\end{equation}
We now explain how to make $\tilde V$ into a $G$-module.  Note that every element of $G$ can be written uniquely as $\iota(a)g$ for some $a \in A$ and $g \in G^\circ$.  We put
\begin{equation}\label{eqn:induced-action}
\iota(a)g \cdot (f \otimes v) = af \otimes \gamma(a,f) \cdot \ad(\iota(f)^{-1})(g) \cdot v.
\end{equation}
Using~\eqref{eqn:gamma-product} one can
check that this does indeed define an action of $G$ on $\tilde V$.  

\begin{lem}
\label{lem:ind-tensor}
The map
\[
f \mapsto \sum_{a \in A} a \otimes f(\iota(a))
\]
defines an isomorphism of $G$-modules $\Ind_{G^\circ}^G(V) \simto \tilde{V}$.
\end{lem}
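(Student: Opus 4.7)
The plan is to first produce the inverse map as a vector-space isomorphism, then check $G$-equivariance separately on the two types of elements $\iota(b) \in G$ ($b \in A$) and $h \in G^\circ$, which together generate $G$.

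For the first step, I would note that $G = \bigsqcup_{a \in A} \iota(a) G^\circ$, so any $f \in \Ind_{G^\circ}^G(V)$ is determined by its values $f(\iota(a))$ via the induction condition $f(\iota(a) g) = \varrho(g^{-1}) f(\iota(a))$, and conversely any family $(v_a)_{a \in A} \in V^A$ extends uniquely to such an (algebraic) function. Identifying $V^A$ with $\bk[A] \otimes V$ by $(v_a)_a \leftrightarrow \sum_a a \otimes v_a$ then shows that the map $\Phi : f \mapsto \sum_{a \in A} a \otimes f(\iota(a))$ is a linear isomorphism onto $\tilde V$.

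For the second step, I would first treat $h \in G^\circ$: since $h^{-1}\iota(a) = \iota(a) \cdot \ad(\iota(a)^{-1})(h^{-1})$, the induction property gives $(h \cdot f)(\iota(a)) = \ad(\iota(a)^{-1})(h) \cdot f(\iota(a))$, so
\[
\Phi(h \cdot f) = \sum_{a \in A} a \otimes \ad(\iota(a)^{-1})(h) \cdot f(\iota(a)),
\]
and this matches $h \cdot \Phi(f)$ upon specializing~\eqref{eqn:induced-action} to $a = 1_A$, $g = h$ (using $\gamma(1_A, f) = 1_G$). Next I would treat $\iota(b)$ for $b \in A$: the defining relation $\iota(b)\iota(b^{-1}a) = \iota(a)\gamma(b, b^{-1}a)$ rearranges to $\iota(b)^{-1} \iota(a) = \iota(b^{-1}a) \gamma(b, b^{-1}a)^{-1}$, so
\[
(\iota(b) \cdot f)(\iota(a)) = f\bigl(\iota(b^{-1}a)\gamma(b, b^{-1}a)^{-1}\bigr) = \gamma(b, b^{-1}a) \cdot f(\iota(b^{-1}a)).
\]
Reindexing by $c = b^{-1}a$ yields $\Phi(\iota(b) \cdot f) = \sum_{c \in A} bc \otimes \gamma(b, c) \cdot f(\iota(c))$, which is exactly $\iota(b) \cdot \Phi(f)$ by~\eqref{eqn:induced-action} specialized to $g = 1_G$.

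Since every element of $G$ has the form $\iota(b) h$, the two computations combine to give full $G$-equivariance, completing the proof. The only nontrivial point is cocycle bookkeeping in the $\iota(b)$ case, which I expect to be the main (but still purely mechanical) obstacle; once the identity $\iota(b)^{-1}\iota(a) = \iota(b^{-1}a)\gamma(b, b^{-1}a)^{-1}$ is in hand, the verification reduces to a clean reindexing.
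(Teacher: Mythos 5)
Your proof is correct and takes the same approach as the paper: the paper identifies the inverse map and leaves the equivariance check "not difficult to the reader," while you verify it directly on the forward map $\Phi$, splitting into the cases $h \in G^\circ$ (via $a = 1_A$ in~\eqref{eqn:induced-action}) and $\iota(b)$ (via $g = 1_G$). Your cocycle bookkeeping $\iota(b)^{-1}\iota(a) = \iota(b^{-1}a)\gamma(b,b^{-1}a)^{-1}$ and the reindexing are exactly what is needed, so this is just a fleshed-out version of the paper's brief argument.
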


\begin{proof}
It is clear that our map is an isomorphism of vector spaces, and that its inverse sends $a \otimes v$ to the function $f : G \to V$ such that $f(\iota(a)g) = g^{-1} \cdot v$ for $g \in G^\circ$ and $f(\iota(b)g) = 0$ for $g \in G^\circ$ and $b \in A \smallsetminus \{a\}$. It is not difficult to check that this inverse map respects the $G$-actions, proving the proposition.
\end{proof}

In view of Lemma~\ref{lem:ind-tensor}, it is clear
that as $G^\circ$-modules, we have
\begin{equation}\label{eqn:induce-restrict}
\Ind_{G^\circ}^G(V) \cong \bigoplus_{f \in A} {}^{\iota(f)} \hspace{-1pt} V,
\end{equation}
as expected.

\subsection{A twisted group algebra of a stabilizer}
\label{ss:twisted}

Let $\lambda \in \bXp$, and let $A^\lambda=\{a \in A \mid {}^a \hspace{-1pt} \lambda = \lambda\}$ be its stabilizer. We also set $G^\lambda := \varpi^{-1}(A^\lambda)$. In view of~\eqref{eqn:twist-standard}, we have
\begin{equation}\label{eqn:stab-subgroup}
G^\lambda = \{g \in G \mid {}^g \hspace{-1pt} L(\lambda) \cong L(\lambda)\}.
\end{equation}

We fix a representative for the simple $G^\circ$-module $L(\lambda)$ and,
for each $a \in A^\lambda$, an isomorphism of $G^\circ$-modules
\[
\theta_a: L(\lambda) \simto {}^{\iota(a)} \hspace{-1pt} L(\lambda).
\]
In the special case that $a = 1_A$, we require that
\[
\theta_{1_A} = \id_{L(\lambda)}.
\]
Explicitly, these maps have the property that for any $g \in G^\circ$ and $v \in L(\lambda)$, we have
\begin{equation}
\label{eqn:def-thetaa}
\theta_a(g \cdot v) = \ad(\iota(a)^{-1})(g) \cdot \theta_a(v),
\end{equation}
where on the right-hand side we consider the given action of $G^\circ$ on $L(\lambda)$.

Now let $a,b \in A^\lambda$, and consider the diagram
\[
\begin{tikzcd}
L(\lambda) \ar[r, "\theta_a"] \ar[rrr, bend left=30, "\theta_{ab}"] &
{}^{\iota(a)} \hspace{-1pt} L(\lambda) \ar[r, "{}^{\iota(a)} \hspace{-1pt} \theta_b"] &
{}^{\iota(a)\iota(b)} \hspace{-1pt} L(\lambda) \ar[r, "{\phi_{a,b}}"] &
{}^{\iota(ab)} \hspace{-1pt} L(\lambda).
\end{tikzcd}
\]
This is \emph{not} a commutative diagram.  Rather, both $\theta_{ab}$ and $\phi_{a,b} \circ {}^{\iota(a)}\theta_b \circ \theta_a$ are isomorphisms of simple $G^\circ$-modules, so they must be scalar multiples of one another.  Let $\alpha(a,b) \in \bk^\times$ be the scalar such that
\[
\phi_{a,b} {}^{\iota(a)}\theta_b \theta_a = \alpha(a,b) \cdot \theta_{ab}.
\]
Our assumptions on $\iota(1_A)$ and $\theta_{1_A}$ imply that for all $a \in A$, we have
\[
\alpha(1_A,a) = \alpha(a,1_A) = 1.
\]

\begin{figure}
\[
\begin{tikzcd}[column sep=11pt]
&&& {}^{\iota(ab)} \hspace{-1pt} L(\lambda) \ar[r, "\theta_c"] &
 {}^{\iota(ab)\iota(c)} \hspace{-1pt} L(\lambda) \ar[dr, "{\phi_{ab,c}}" description] \\
L(\lambda) \ar[r, "\theta_a"] \ar[urrr, bend left=20, "\theta_{ab}"] \ar[rrrrr, bend left=50, "\theta_{abc}"] &
{}^{\iota(a)} \hspace{-1pt} L(\lambda) \ar[r, "\theta_b"] \ar[drrr, bend right=40, "\theta_{bc}"] &
{}^{\iota(a)\iota(b)} \hspace{-1pt} L(\lambda) \ar[ur, "{\phi_{a,b}}"] \ar[dr, "\theta_c"] &&&
{}^{\iota(abc)} \hspace{-1pt} L(\lambda) \\
&&& {}^{\iota(a)\iota(b)\iota(c)} \hspace{-1pt} L(\lambda) \ar[r, "{\phi_{b,c}}"] \ar[uur, "{\ad(\iota(c)^{-1})(\gamma(a,b)) \cdot (-)}" description] &
  {}^{\iota(a)\iota(bc)} \hspace{-1pt} L(\lambda) \ar[ur, "{\phi_{a,bc}}"]
\end{tikzcd}
\]
\caption{Isomorphisms of $L(\lambda)$ with ${}^{\iota(abc)} \hspace{-1pt} L(\lambda)$}\label{fig:cocycle}
\end{figure}
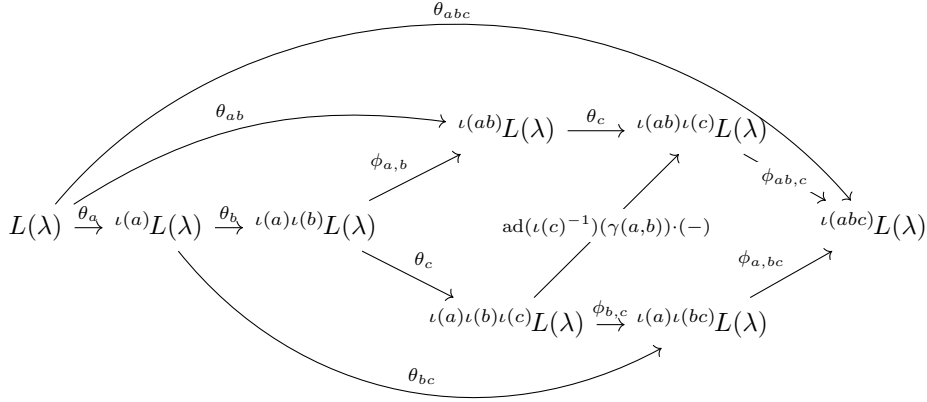

Given three elements $a,b,c \in A^\lambda$, we can form the diagram shown in Figure~\ref{fig:cocycle}.  The subdiagram consisting of straight arrows is commutative (by~\eqref{eqn:gamma-product},~\eqref{eqn:def-thetaa} and the definitions), whereas each curved arrow introduces a scalar factor.  Comparing the different scalars shows that
\[
\alpha(a,b)\alpha(ab,c) = \alpha(a,bc) \alpha(b,c).
\]
In other words, $\alpha: A^\lambda \times A^\lambda \to \bk^\times$ is a $2$-cocycle.

Let $\sA^\lambda$ be the twisted group algebra of $A^\lambda$ determined by this cocycle.  Explicitly, we define $\sA^\lambda$ to be the $\bk$-vector space spanned by symbols $\{ \rho_a : a \in A^\lambda \}$ with multiplication given by
\[
\rho_a \rho_b = \alpha(a,b) \rho_{ab}.
\]
This is a unital $\bk$-algebra, with unit $\rho_{1_A}$. 

The algebra $\sA^\lambda$ can be described in more canonical terms as follows.

\begin{prop}
\label{prop:end-induced}
There exists a canonical isomorphism of $\bk$-algebras
\[
\End_{G^\lambda} \bigl( \Ind_{G^\circ}^{G^\lambda} (L(\lambda)) \bigr) \cong (\sA^\lambda)^\op.
\]
\end{prop}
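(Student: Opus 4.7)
The plan is to exhibit an explicit basis of $\End_{G^\lambda}(\Ind_{G^\circ}^{G^\lambda}(L(\lambda)))$ indexed by $A^\lambda$, compute its multiplication table, and verify that it matches the defining relations of $\sA^\lambda$ but in the opposite order.

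First, Lemma~\ref{lem:ind-tensor} lets me identify $\Ind_{G^\circ}^{G^\lambda}(L(\lambda))$ with $\widetilde{L(\lambda)} := \bigoplus_{a \in A^\lambda} \bk a \otimes L(\lambda)$, with $G^\lambda$-action given by~\eqref{eqn:induced-action}. By~\eqref{eqn:induce-restrict} its restriction to $G^\circ$ is $\bigoplus_{a \in A^\lambda} {}^{\iota(a)} \hspace{-1pt} L(\lambda)$, and since $a \in A^\lambda$ each summand is isomorphic to $L(\lambda)$ via $\theta_a$. Schur's lemma combined with Frobenius reciprocity then gives $\dim \End_{G^\lambda}(\widetilde{L(\lambda)}) = |A^\lambda|$. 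For each $b \in A^\lambda$, I would define a linear endomorphism
\[
T_b(f \otimes v) := fb \otimes \gamma(f,b) \cdot \theta_b(v).
\]
Its $G^\circ$-equivariance is immediate from~\eqref{eqn:def-thetaa}, and commutation with the left action of each $\iota(c)$ (for $c \in A^\lambda$) is exactly the content of~\eqref{eqn:gamma-product} after the substitution $(a,b,c) \mapsto (c,f,b)$. Since $T_b$ sends the $f$-summand into the $fb$-summand, linear independence is clear, and the dimension count shows that $\{T_b\}_{b \in A^\lambda}$ is a basis of $\End_{G^\lambda}(\widetilde{L(\lambda)})$.

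Finally, I would compute the composition $T_d \circ T_e$ directly. Applying the two operators in succession to $f \otimes v$ produces a $\gamma$-factor of the form $\gamma(fe,d) \cdot \ad(\iota(d)^{-1})(\gamma(f,e))$, which collapses via~\eqref{eqn:gamma-product} (now with $(a,b,c) \mapsto (f,e,d)$) to $\gamma(f,ed)\gamma(e,d)$; the remaining piece $\varrho(\gamma(e,d))\theta_d\theta_e$ is then rewritten as $\alpha(e,d)\theta_{ed}$ via the defining identity of the $2$-cocycle $\alpha$. The outcome is $T_d T_e = \alpha(e,d)\, T_{ed}$. Since $\rho_a \rho_b = \alpha(a,b) \rho_{ab}$ in $\sA^\lambda$, the assignment $\rho_b \mapsto T_b$ is an anti-homomorphism and therefore extends to the desired isomorphism $(\sA^\lambda)^\op \simto \End_{G^\lambda}(\widetilde{L(\lambda)})$. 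The main difficulty is purely organizational: chaining~\eqref{eqn:gamma-product} with the identity defining $\alpha$ in the correct order, and recognizing that the transposition of arguments forced by these identities is precisely what produces the \emph{opposite} algebra rather than $\sA^\lambda$ itself.
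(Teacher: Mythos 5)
Your proof is correct and follows essentially the same route as the paper's: the operators $T_b$ you define are exactly the right-multiplication operators $\rho_b$ from the paper's right $\sA^\lambda$-module structure on $\bk[A^\lambda]\otimes L(\lambda)$, and both arguments verify commutation with the $G^\lambda$-action via \eqref{eqn:gamma-product} and \eqref{eqn:def-thetaa}, obtain injectivity from the permutation of summands, and conclude by the $|A^\lambda|$-dimension count via Frobenius reciprocity. The only (cosmetic) difference is that the paper packages the $T_b$'s as a right module structure and extracts the map $(\sA^\lambda)^\op\to\End$ from that, whereas you compute the multiplication table $T_dT_e=\alpha(e,d)T_{ed}$ directly and read off the anti-homomorphism.
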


\begin{proof}
We will work with the description of $\Ind_{G^\circ}^{G^\lambda} (L(\lambda))$ from Lemma~\ref{lem:ind-tensor} (applied to the group $G^\lambda$):
we identify it with $\bk[A^\lambda] \otimes L(\lambda)$, where the action of $G^\lambda$ is given by~\eqref{eqn:induced-action}.

We begin by equipping $\bk[A^\lambda] \otimes L(\lambda)$ with the structure of a right $\sA^\lambda$-module as follows: given $a,f \in A^\lambda$ and $v \in L(\lambda)$, we put
\begin{equation}\label{eqn:sA-action}
(f \otimes v) \cdot \rho_a := (fa) \otimes \gamma(f,a) \cdot \theta_a(v).
\end{equation}
Let us check that this is indeed a right $\sA^\lambda$-module structure:
\begin{align*}
((f \otimes v) \cdot \rho_a) \cdot \rho_b
&= ((fa) \otimes \gamma(f,a) \cdot \theta_a(v)) \cdot \rho_b \\
&= (fab) \otimes \gamma(fa,b) \cdot \theta_b(\gamma(f,a) \cdot \theta_a(v)) \\
&= (fab) \otimes \gamma(fa,b) \ad(\iota(b)^{-1})(\gamma(f,a)) \cdot \theta_b(\theta_a(v)) \\
&= (fab) \otimes \gamma(f,ab) \gamma(a,b) \cdot \theta_b(\theta_a(v)) \\
&= (fab) \otimes \alpha(a,b)(\gamma(f,ab) \cdot \theta_{ab}(v)) \\
&= (f \otimes v) \cdot (\alpha(a,b)\rho_{ab}).
\end{align*}
(Here, the third equality relies on~\eqref{eqn:def-thetaa}, and the fourth one on~\eqref{eqn:gamma-product}.)
Next, we check that the right action of $\sA^\lambda$ commutes with the left action of $G$:
\begin{align*}
\iota(a)g \cdot ((f \otimes v) \cdot \rho_b) 
&= \iota(a)g \cdot ((fb) \otimes \gamma(f,b) \cdot \theta_b(v)) \\
&= (afb) \otimes \gamma(a,fb)\ad(\iota(fb)^{-1})(g)\gamma(f,b) \cdot \theta_b(v) \\
&= (afb) \otimes \gamma(a,fb) \gamma(f,b) \ad((\iota(fb)\gamma(f,b))^{-1})(g) \cdot \theta_b(v) \\
&= (afb) \otimes \gamma(af,b) \ad(\iota(b)^{-1})(\gamma(a,f)) \ad((\iota(f)\iota(b))^{-1})(g) \cdot \theta_b(v) \\
&= (afb) \otimes \gamma(af,b) \theta_b(\gamma(a,f) \ad(\iota(f)^{-1})(g) \cdot v) \\
&= ((af) \otimes \gamma(a,f) \ad(\iota(f)^{-1})(g) \cdot v) \cdot \rho_b \\
&= (\iota(a)g \cdot (f \otimes v)) \cdot \rho_b.
\end{align*}

As a consequence, the right $\sA^\lambda$-action gives rise to an algebra homomorphism
\[
\varphi: (\sA^\lambda)^\op \to \End_{G^\lambda}(\bk[A^\lambda] \otimes L(\lambda)).
\]
For each $a \in A^\lambda$, the operator $\varphi(\rho_a)$ permutes the direct summands $\bk f \otimes L(\lambda) \subset \bk[A^\lambda] \otimes L(\lambda)$, as $f$ runs over elements of $A^\lambda$.  Moreover, distinct $a$'s give rise to distinct permutations.  It follows from this that the collection of linear operators $\{ \varphi(\rho_a) : a \in A^\lambda \}$ is linearly independent.  In other words, $\varphi$ is injective.

On the other hand, by adjunction, we have
\begin{equation}
\label{eqn:dim-End}
\dim \End_{G^\lambda} \bigl( \Ind_{G^\circ}^{G^\lambda} (L(\lambda)) \bigr)
= \dim \Hom_{G^\circ} \bigl( \Ind_{G^\circ}^{G^\lambda} (L(\lambda)), L(\lambda) \bigr).
\end{equation}
Now,~\eqref{eqn:induce-restrict} implies that as a $G^\circ$-module, $\Ind_{G^\circ}^{G^\lambda} (L(\lambda))$ is isomorphic to a direct sum of $|A^\lambda|$ copies of $L(\lambda)$.  So~\eqref{eqn:dim-End} shows that
\[
\dim \End_{G^\lambda} \bigl( \Ind_{G^\circ}^{G^\lambda} (L(\lambda)) \bigr) = |A^\lambda| = \dim \sA^\lambda.
\]
Since $\varphi$ is an injective map between $\bk$-vector spaces of the same dimension, it is also surjective, and hence an isomorphism.
\end{proof}

\begin{rmk}\phantomsection
\label{rmk:indep-1}
\begin{enumerate}
\item
The $G^\circ$-module $L(\lambda)$ is defined only up to isomorphism. But if $L'(\lambda)$ is another choice for this module, then an isomorphism $L(\lambda) \simto L'(\lambda)$ is unique up to scalar (and exists). Hence the induced isomorphism $\End_{G^\lambda} \bigl( \Ind_{G^\circ}^{G^\lambda} (L(\lambda)) \bigr) \simto \End_{G^\lambda} \bigl( \Ind_{G^\circ}^{G^\lambda} (L'(\lambda)) \bigr)$ does not depend on the choice of isomorphism. In other words, the algebra $\End_{G^\lambda} \bigl( \Ind_{G^\circ}^{G^\lambda} (L(\lambda)) \bigr)$ is completely canonical, i.e.~does not depend on any choice.
\item
Once the $G^\circ$-module $L(\lambda)$ is fixed,
our description of the $\bk$-algebra $\sA^\lambda$, and of its identification with $\End_{G^\lambda}(\Ind_{G^\circ}^{G^\lambda}(L(\lambda)))^\op$ in Proposition~\ref{prop:end-induced}, depend on the choice of the isomorphisms $\theta_a$ for $a \in A \smallsetminus \{1\}$. However, if $\{\theta'_a : a \in A \smallsetminus \{1\}\}$ is another choice of such isomorphisms, and $\{\rho'_a : a \in A\}$ is the basis of the corresponding algebra $(\sA')^\lambda$, then for any $a \in A$ there exists a unique $t_a \in \bk^\times$ such that $\theta'_a = t_a \theta_a$. It is easy to check that the assignment $\rho'_a \mapsto t_a \rho_a$ defines an algebra isomorphism $(\sA')^\lambda \simto \sA^\lambda$ which commutes with the identifications provided by Proposition~\ref{prop:end-induced}.
\item
\label{it:action-A-Ind}
If, instead of using Lemma~\ref{lem:ind-tensor} to describe the $G^\lambda$-module $\Ind_{G^\circ}^{G^\lambda} (L(\lambda))$, we describe it in terms of algebraic functions $\phi : G^\lambda \to L(\lambda)$ satisfying $\phi(gh)=h^{-1} \cdot \phi(g)$ for $h \in G^\circ$, then the right action of $\sA^\lambda$ on this module satisfies $(\phi \cdot \rho_a)(g) = \theta_a \circ \phi(g\iota(a)^{-1})$.
\end{enumerate}
\end{rmk}

\subsection{Simple \texorpdfstring{$G^\lambda$}{Glambda}-modules whose restriction to \texorpdfstring{$G^\circ$}{Go} is a direct sum of copies of \texorpdfstring{$L(\lambda)$}{Llambda}}\label{ss:lambda-E}

We continue with the setting of~\S\ref{ss:twisted}, and in particular with our fixed $\lambda \in \bX^+$.
If $E$ is a finite-dimensional left $\sA^\lambda$-module,
we define a $G^\lambda$-action on the $\bk$-vector space $E \otimes L(\lambda)$
by
\begin{equation}
\label{eqn:L-module}
\iota(a)g \cdot (u \otimes v) = \rho_au \otimes \theta_a^{-1}(gv)
\qquad\text{for $a \in A^\lambda$ and $g \in G^\circ$.}
\end{equation}

\begin{lem}
\label{lem:L-module}
The rule~\eqref{eqn:L-module} defines a structure of $G^\lambda$-module on $E \otimes L(\lambda)$.
\end{lem}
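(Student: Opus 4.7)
The plan is to verify the two group-action axioms for the rule~\eqref{eqn:L-module}. Every element of $G^\lambda$ has a unique decomposition $\iota(a)g$ with $a \in A^\lambda$ and $g \in G^\circ$, so~\eqref{eqn:L-module}, extended bilinearly in $(u,v)$, defines an unambiguous set-theoretic map $G^\lambda \times (E \otimes L(\lambda)) \to E \otimes L(\lambda)$, and algebraicity on each coset $\iota(a)G^\circ$ is immediate. The identity axiom is automatic: $\iota(1_A) = 1_G$, $\rho_{1_A}$ is the unit of $\sA^\lambda$, and $\theta_{1_A} = \id_{L(\lambda)}$, so $1_G \cdot (u \otimes v) = u \otimes v$.

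For multiplicativity, I would take $\iota(a)g, \iota(b)h \in G^\lambda$ with $a,b \in A^\lambda$ and $g,h \in G^\circ$. Using $\iota(a)\iota(b) = \iota(ab)\gamma(a,b)$ from \S\ref{ss:component}, the product decomposes as $(\iota(a)g)(\iota(b)h) = \iota(ab) \cdot \bigl(\gamma(a,b) \ad(\iota(b)^{-1})(g) h\bigr)$, so applying~\eqref{eqn:L-module} to this decomposition yields
\[
\bigl((\iota(a)g)(\iota(b)h)\bigr) \cdot (u \otimes v) = \rho_{ab} u \otimes \theta_{ab}^{-1}\!\bigl(\gamma(a,b) \ad(\iota(b)^{-1})(g) h \cdot v\bigr),
\]
while iterating~\eqref{eqn:L-module} on the other side gives $\rho_a \rho_b u \otimes \theta_a^{-1}\bigl(g \cdot \theta_b^{-1}(hv)\bigr)$.

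These two expressions are matched using three ingredients: (i) the multiplication rule $\rho_a \rho_b = \alpha(a,b) \rho_{ab}$ in $\sA^\lambda$; (ii) the intertwining property~\eqref{eqn:def-thetaa} for $\theta_b$, which upon inversion reads $g \cdot \theta_b^{-1}(y) = \theta_b^{-1}(\ad(\iota(b)^{-1})(g) \cdot y)$; and (iii) the definition of $\alpha$, which, once $\phi_{a,b}$ is recognized as the linear operator $\gamma(a,b)\cdot$ and ${}^{\iota(a)}\theta_b$ as the same linear map as $\theta_b$, reads $\gamma(a,b) \cdot \theta_b \theta_a = \alpha(a,b) \theta_{ab}$ and inverts to $\theta_a^{-1} \theta_b^{-1} = \alpha(a,b)^{-1} \theta_{ab}^{-1} \circ (\gamma(a,b)\cdot)$. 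Applying (ii) to commute $g$ past $\theta_b^{-1}$ and then (iii) to collapse $\theta_a^{-1}\theta_b^{-1}$ produces a factor $\alpha(a,b)^{-1}$ in the second tensor slot which cancels the $\alpha(a,b)$ arising from (i), leaving exactly the expression for $\bigl((\iota(a)g)(\iota(b)h)\bigr) \cdot (u \otimes v)$. The main obstacle is purely clerical: one has to keep straight which $\gamma(a,b)$ (the one from the product in $G^\lambda$ versus the one hidden inside $\phi_{a,b}$) is doing what, and which $\alpha(a,b)$ (from (i) versus (iii)) cancels which; the cocycle identity~\eqref{eqn:gamma-product} and the $2$-cocycle property of $\alpha$ itself are not needed here — they are already built into the associativity of $\sA^\lambda$ used implicitly when writing $\rho_a(\rho_b u) = (\rho_a\rho_b)u$.
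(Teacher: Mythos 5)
Your proposal is correct and matches the paper's proof: it uses the same decomposition $(\iota(a)g)(\iota(b)h) = \iota(ab)\bigl(\gamma(a,b)\,\ad(\iota(b)^{-1})(g)\,h\bigr)$, the same interpretation of $\phi_{a,b}$ as the operator $\gamma(a,b)\cdot(-)$ and of ${}^{\iota(a)}\theta_b$ as $\theta_b$, and the same cancellation of the two $\alpha(a,b)$ factors. The only additions are the (correct) explicit check of the identity axiom and the observation that the cocycle relation~\eqref{eqn:gamma-product} is not needed, both of which the paper leaves implicit.
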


\begin{proof}
Note that
\[
\iota(a)g\iota(b)h = \iota(a)\iota(b) \ad(\iota(b)^{-1})(g) h = \iota(ab) (\gamma(a,b) \ad(\iota(b)^{-1})(g) h).
\]
We now have
\begin{align*}
\iota(a)g\cdot (\iota(b)h \cdot (u \otimes v))
&= \iota(a)g \cdot (\rho_bu \otimes \theta_b^{-1}(hv)) \\
&= \rho_a\rho_b u \otimes \theta_a^{-1}(g \theta_b^{-1}(hv)) \\
&= \alpha(a,b) \rho_{ab} u \otimes (\theta_b \circ \theta_a)^{-1}(\ad(\iota(b)^{-1})(g)hv) \\
&= \rho_{ab} u \otimes \theta_{ab}^{-1}(\gamma(a,b) \ad(\iota(b)^{-1})(g) hv) \\
&= (\iota(a)g\iota(b)h) \cdot (u \otimes v),
\end{align*}
proving the desired formula.
\end{proof}

\begin{prop}
\label{prop:Glambda}
The assignment $E \mapsto E \otimes L(\lambda)$ defines a bijection between the set of isomorphism classes of simple $\sA^\lambda$-modules and the set of isomorphism classes of simple $G^\lambda$-modules whose restriction to $G^\circ$ is a direct sum of copies of $L(\lambda)$.
\end{prop}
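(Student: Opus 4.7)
The plan is to exhibit an explicit quasi-inverse to the assignment $E \mapsto E \otimes L(\lambda)$, namely the functor $V \mapsto \Hom_{G^\circ}(L(\lambda), V)$. Given a $G^\lambda$-module $V$ whose restriction to $G^\circ$ is a direct sum of copies of $L(\lambda)$, I would equip $E(V) := \Hom_{G^\circ}(L(\lambda), V)$ with a left $\sA^\lambda$-action by the formula
\[
(\rho_a \cdot \psi)(w) := \iota(a) \cdot \psi(\theta_a(w)), \qquad a \in A^\lambda,\ \psi \in E(V),\ w \in L(\lambda).
\]
Equation~\eqref{eqn:def-thetaa} ensures $\rho_a \cdot \psi$ is again $G^\circ$-equivariant, so the formula is well defined. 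To verify the multiplicative relation $\rho_a \cdot (\rho_b \cdot \psi) = \alpha(a,b)\rho_{ab} \cdot \psi$, I would use $\iota(a)\iota(b) = \iota(ab)\gamma(a,b)$ together with the defining identity $\phi_{a,b} \circ {}^{\iota(a)}\theta_b \circ \theta_a = \alpha(a,b)\theta_{ab}$; since $\phi_{a,b}$ is the action of $\gamma(a,b)$ by Lemma~\ref{lem:twist-inner}, this reads $\gamma(a,b) \cdot (\theta_b \circ \theta_a)(w) = \alpha(a,b)\theta_{ab}(w)$ as linear maps on $L(\lambda)$, after which the element $\gamma(a,b)$ can be pulled through $\psi$ by $G^\circ$-equivariance to produce exactly the required cocycle factor.

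With this $\sA^\lambda$-structure in hand, I would show the two constructions are mutually inverse. The evaluation map $E(V) \otimes L(\lambda) \to V$, $\psi \otimes v \mapsto \psi(v)$, is evidently $G^\circ$-equivariant, and — by the standard theory of isotypic decomposition together with $\End_{G^\circ}(L(\lambda)) = \bk$ — is an isomorphism of vector spaces; a direct unwrapping of~\eqref{eqn:L-module} shows that it commutes with the action of each $\iota(a)$, so it is a $G^\lambda$-isomorphism. Conversely, for an $\sA^\lambda$-module $E$, the natural map $E \to \Hom_{G^\circ}(L(\lambda), E \otimes L(\lambda))$ sending $u$ to $v \mapsto u \otimes v$ is an isomorphism by the same Schur-type reasoning, and is $\sA^\lambda$-equivariant by a short computation using~\eqref{eqn:L-module} at $g = 1_G$.

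Finally, to upgrade this essentially-formal equivalence to the bijection on simples, I would observe that $\sA^\lambda$-submodules of $E$ correspond bijectively to $G^\lambda$-submodules of $E \otimes L(\lambda)$: any $G^\circ$-submodule of $E \otimes L(\lambda)$ has the form $F \otimes L(\lambda)$ for a unique subspace $F \subset E$ (again by Schur), and reading~\eqref{eqn:L-module} at $g = 1_G$ shows that such $F \otimes L(\lambda)$ is $G^\lambda$-stable if and only if $F$ is $\sA^\lambda$-stable. Thus simplicity is both preserved and reflected, and the assignment $E \mapsto E \otimes L(\lambda)$ descends to the claimed bijection on isomorphism classes. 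The only real obstacle is the first paragraph — the bookkeeping that ensures the twists $\theta_a, \theta_b, \theta_{ab}$ and the cocycles $\gamma, \alpha$ combine so that the defining relation of $\sA^\lambda$ emerges cleanly; everything after that is an essentially formal unwrapping of~\eqref{eqn:L-module} combined with Schur's lemma.
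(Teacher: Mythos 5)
Your proposal is correct and follows essentially the same route as the paper: both define the quasi-inverse $V \mapsto \Hom_{G^\circ}(L(\lambda),V)$ with the identical $\sA^\lambda$-action $(\rho_a\cdot\psi)(w)=\iota(a)\cdot\psi(\theta_a(w))$, verify the cocycle relation via~\eqref{eqn:gamma-product} and~\eqref{eqn:def-thetaa}, and show the evaluation map is a $G^\lambda$-isomorphism, concluding that the two functors are quasi-inverse equivalences. Your third paragraph, which re-derives preservation of simples from an explicit submodule correspondence, is a harmless redundancy since an equivalence of categories already preserves and reflects simple objects.
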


\begin{proof}
We will show that if $V$ is a finite dimensional $G^\lambda$-module whose restriction to $G^\circ$ is a direct sum of copies of $L(\lambda)$, and if we set $E:=\Hom_{G^\circ}(L(\lambda),V)$, then $E$ has a natural structure of a left $\sA^\lambda$-module, and there exists an isomorphism of $G^\lambda$-modules
\[
\eta_{\lambda,E} : E \otimes L(\lambda) \simto V.
\]

We define the $\sA^\lambda$-action on $E$ by
\[
(\rho_a \cdot f)(x) = \iota(a) \cdot f(\theta_a(x)) 
\]
for $f \in E=\Hom_{G^\circ}(L(\lambda),V)$ and $x \in L(\lambda)$. (We leave it to the reader to check that $\rho_a \cdot f$ is a morphism of $G^\circ$-modules.) To justify that this defines an $\sA^\lambda$-module structure, we simply compute:
\begin{align*}
(\rho_a \cdot (\rho_b \cdot f))(x) &= \iota(a) \cdot (\rho_b \cdot f)(\theta_a(x)) \\
&= \iota(a) \cdot \iota(b) \cdot f(\theta_b \circ \theta_a(x)) \\
&= \iota(ab) \cdot \gamma(a,b) \cdot f(\theta_b \circ \theta_a(x)) \\
&= \iota(ab) \cdot f(\gamma(a,b) \cdot \theta_b \circ \theta_a(x)) \\
&= \alpha(a,b) \cdot \iota(ab) \cdot f(\theta_{ab}(x)) \\
&= ((\alpha(a,b) \rho_{ab}) \cdot f)(x).
\end{align*}
Now there exists a canonical isomorphism of $G^\circ$-modules
\[
\eta_{\lambda,E} :
E \otimes L(\lambda) = \Hom_{G^\circ}(L(\lambda),V) \otimes L(\lambda) \simto V,
\]
defined by $\eta_{\lambda,E}(f \otimes v) = f(v)$. Let us check that this morphism also commutes with the action of $\iota(A)$.
By definition we have
\[
\iota(a) \cdot (f \otimes v) = (\rho_a \cdot f) \otimes \theta_a^{-1}(v) = \sigma(\iota(a)) \circ f \circ \theta_a \otimes \theta_a^{-1}(v),
\]
where $\sigma : G^\lambda \to \mathrm{GL}(V)$ is the morphism defining the $G^\lambda$-action. Hence
\[
\eta_{\lambda,E} \bigl( \iota(a) \cdot (f \otimes v) \bigr) = \iota(a) \cdot f(v) = \iota(a) \cdot \eta_{\lambda,E}(f \otimes v),
\]
proving that $\eta_{\lambda,E}$ is an isomorphism of $G^\lambda$-modules.

It is clear that the assignments
\[
- \otimes L(\lambda) : E \mapsto E \otimes L(\lambda) \quad \text{and} \quad \Hom_{G^\circ}(L(\lambda),-) : V \mapsto \Hom_{G^\circ}(L(\lambda),V)
\]
define functors from the category of finite-dimensional $\sA^\lambda$-modules to the category of finite-dimensional $G^\lambda$-modules whose restriction to $G^\circ$ are isomorphic to a direct sum of copies of $L(\lambda)$, and from the category of finite-dimensional $G^\lambda$-modules whose restriction to $G^\circ$ are isomorphic to a direct sum of copies of $L(\lambda)$ to the category of finite-dimensional $\sA^\lambda$-modules respectively. It is straightforward to construct an isomorphism of functors $\Hom_{G^\circ}(L(\lambda),-) \circ (- \otimes L(\lambda)) \simto \id$, 
as well as an isomorphism $(- \otimes L(\lambda)) \circ \Hom_{G^\circ}(L(\lambda),-) \simto \id$ defined by $\eta_{\lambda,-}$. Our functors are thus equivalences of categories, quasi-inverse to each other; hence they define bijections between the sets of isomorphism classes of simple objects in these categories.
\end{proof}

\begin{rmk}
\label{rmk:indep-2}
As in Remark~\ref{rmk:indep-1}, it can be easily checked that the assignment $E \mapsto E \otimes L(\lambda)$ does not depend on the choice of the isomorphisms $\{\theta_a : a \in A\}$, in the sense that if $\{\theta'_a : a \in A\}$ is another choice of such isomorphisms, and if $(\sA')^\lambda$ is the corresponding algebra, then the identification $(\sA')^\lambda \simto \sA^\lambda$ considered in Remark~\ref{rmk:indep-1} defines a bijection between isomorphism classes of simple $(\sA')^\lambda$-modules and $\sA^\lambda$-modules, which commutes with the operations $- \otimes L(\lambda)$. Of course, these constructions do not depend on the choice of $L(\lambda)$ in its isomorphism class either.
\end{rmk}

\subsection{Induction from \texorpdfstring{$G^\lambda$}{Glambda} to \texorpdfstring{$G$}{G}}
\label{ss:induction-Glambda}

We continue with the setting of~\S\S\ref{ss:twisted}--\ref{ss:lambda-E}. If $E$ is a finite-dimensional $\mathscr{A}^\lambda$-module, we now consider the $G$-module
\[
L(\lambda,E) := \Ind_{G^\lambda}^G(E \otimes L(\lambda)).
\]

\begin{lem}
\label{lem:Ind-simple}
If $E$ is a simple $\sA^\lambda$-module, then $L(\lambda, E)$ is a simple $G$-module.
\end{lem}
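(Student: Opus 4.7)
The plan is to decompose $L(\lambda, E)$ into $G^\circ$-isotypic components and then apply a Clifford-type argument. Let $a_1 = 1_A, a_2, \dots, a_n$ be coset representatives of $A^\lambda$ in $A$; then the $\iota(a_i)$'s form a set of coset representatives for $G^\lambda$ in $G$. The same evaluation argument as in Lemma~\ref{lem:ind-tensor} (applied to the intermediate subgroup $G^\lambda$ rather than to $G^\circ$) provides a decomposition
\[
L(\lambda, E) \cong \bigoplus_{i=1}^n U_i, \qquad U_i := {}^{\iota(a_i)}(E \otimes L(\lambda)),
\]
valid in the category of $G^\circ$-modules. Since the restriction of $E \otimes L(\lambda)$ to $G^\circ$ is isomorphic to $L(\lambda)^{\oplus \dim E}$ (by~\eqref{eqn:L-module} applied with $a = 1_A$), the twist formula~\eqref{eqn:twist-standard} gives $U_i \cong L({}^{a_i} \lambda)^{\oplus \dim E}$ as $G^\circ$-modules. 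As the weights ${}^{a_i} \lambda$ are pairwise distinct (by definition of $A^\lambda$), the $U_i$ are the pairwise non-isomorphic $G^\circ$-isotypic components of $L(\lambda, E)$, and any $G^\circ$-submodule $W \subseteq L(\lambda, E)$ satisfies $W = \bigoplus_i (W \cap U_i)$.

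I next single out the summand $U_1 = E \otimes L(\lambda)$ corresponding to the trivial coset. It is stable under $G^\lambda$ (being the $L(\lambda)$-isotypic component of $L(\lambda, E)$, in view of~\eqref{eqn:stab-subgroup}), and as a $G^\lambda$-module it coincides with the simple module $E \otimes L(\lambda)$ supplied by Proposition~\ref{prop:Glambda}. From the explicit formula~\eqref{eqn:induced-action} it is immediate that $\iota(a_j)$ acts on $L(\lambda, E)$ by sending the trivial-coset summand bijectively onto the $a_j$-th coset summand, so $\iota(a_j) \cdot U_1 = U_j$, and equivalently $\iota(a_j)^{-1} \cdot U_j = U_1$, for every $j$.

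Now let $W \subseteq L(\lambda, E)$ be a nonzero $G$-submodule. Then $W \cap U_1$ is a $G^\lambda$-submodule of the simple $G^\lambda$-module $U_1$, so it is either $0$ or all of $U_1$. If $W \cap U_1 = 0$, then the $G$-stability of $W$ yields $\iota(a_i)^{-1}(W \cap U_i) \subseteq W \cap U_1 = 0$ for every $i$, hence $W \cap U_i = 0$, contradicting $W \neq 0$. Therefore $W \supseteq U_1$, and $G$-stability then gives $W \supseteq \iota(a_j) U_1 = U_j$ for each $j$, so $W = L(\lambda, E)$. The only real subtlety is verifying how the elements $\iota(a_j)$ permute the summands $U_i$, but this is a routine computation in the function model of induction.
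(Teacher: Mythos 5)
Your proof is correct, and while it is the same Clifford-theoretic idea as the paper's, the execution is genuinely different. The paper picks a simple $G$-submodule $V$, uses Frobenius reciprocity to produce a \emph{surjection} $V \twoheadrightarrow E \otimes L(\lambda)$ of $G^\lambda$-modules, deduces the lower bound $[V:L(\lambda)]_{G^\circ} \ge \dim E$, propagates this bound to all twists $[V:{}^{g_i}\hspace{-1pt} L(\lambda)]_{G^\circ}$ via $G$-stability (Lemma~\ref{lem:twist-inner}), and concludes by a dimension count that $V = L(\lambda,E)$. You instead take an arbitrary nonzero $G$-submodule $W$, decompose $L(\lambda,E)$ into pairwise non-isomorphic $G^\circ$-isotypic components $U_i$ (using semisimplicity over $G^\circ$), identify $U_1$ with the simple $G^\lambda$-module $E \otimes L(\lambda)$ from Proposition~\ref{prop:Glambda}, observe that the $\iota(a_j)$ permute the $U_i$, and deduce $W \cap U_1 = U_1$ (else $W = 0$) and hence $W = L(\lambda,E)$ by pushing around with $\iota(a_j)$. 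In effect the paper works with the \emph{projection} onto the trivial-coset summand supplied by the counit of adjunction, while you work with the \emph{intersection} with that summand; these are dual moves, and both rely on the simplicity of $E \otimes L(\lambda)$. Your version avoids the composition-factor bookkeeping and the dimension count, at the mild cost of spelling out the coset-support decomposition of the induced module and verifying how $\iota(a_j)$ acts on it, which you handle correctly. Both arguments are sound; yours is arguably more self-contained and structural, while the paper's is shorter once Frobenius reciprocity is invoked.
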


\begin{proof}
Let $V \subset L(\lambda,E)$ be a simple $G$-submodule. For any simple $G^\circ$-module $L$, let $[V:L]_{G^\circ}$ denote the multiplicity of $L$ as a composition factor of $V$, regarded as a $G^\circ$-module.  The image of the embedding $V \hookrightarrow L(\lambda,E)$ under the isomorphism
\[
\Hom_G \bigl( V,  L(\lambda,E) \bigr) = \Hom_G \bigl( V,  \Ind_{G^\lambda}^G(E \otimes L(\lambda)) \bigr) \cong \Hom_{G^\lambda}(V,E \otimes L(\lambda))
\]
given by Frobenius reciprocity provides a nonzero morphism of $G^\lambda$-modules $V \to E \otimes L(\lambda)$, which must be surjective since $E \otimes L(\lambda)$ is simple by Proposition~\ref{prop:Glambda}. It follows that $[V: L(\lambda)]_{G^\circ} \ge \dim(E)$.
Now, as in~\eqref{eqn:induce-restrict}, if $g_1, \ldots, g_r$ are representatives in $G$ of the cosets in $G/G^\lambda$, then as $G^\circ$-modules we have
\[
L(\lambda,E) = \Ind_{G^\lambda}^G(E \otimes L(\lambda)) \cong \bigoplus_{i=1}^r {}^{g_i} \hspace{-1pt} L(\lambda)^{\oplus \dim(E)}.
\]
Since $V$ is stable under the $G$-action, we have $[V: L(\lambda)]_{G^\circ} = [V: {}^{g_i}\hspace{-1pt} L(\lambda)]_{G^\circ}$ for all $i$
(see Lemma~\ref{lem:twist-inner}), and hence $[V : {}^{g_i} \hspace{-1pt} L(\lambda)]_{G^\circ} \ge \dim(E)$ for all $i$.  This implies that $\dim(V) \ge \dim(\Ind_{G^\lambda}^G(E \otimes L(\lambda)))$, so in fact $V = \Ind_{G^\lambda}^G(E \otimes L(\lambda))$, as desired.
\end{proof}

\subsection{Simple \texorpdfstring{$G$}{G}-modules}

We come back to the general setting of~\S\ref{ss:disconn}. (In particular, the dominant weight $\lambda$ is not fixed anymore.)
We can now prove that the procedure explained in~\S\S\ref{ss:twisted}--\ref{ss:induction-Glambda} allows us to construct \emph{all} simple $G$-modules (up to isomorphism).

\begin{lem}
\label{lem:simple-Ind}
Let $V$ be a simple $G$-module. Then there exists $\lambda \in \bXp$, a simple $\sA^\lambda$-module $E$, and an isomorphism of $G$-modules
\[
V \simto L(\lambda,E).
\]
\end{lem}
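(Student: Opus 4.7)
The plan is to construct, inside $V$, the $L(\lambda)$-isotypic component for some well-chosen $\lambda$, show that it carries a natural $G^\lambda$-module structure of the form $E \otimes L(\lambda)$ via Proposition~\ref{prop:Glambda}, and then use Frobenius reciprocity to recover $V$ as $\Ind_{G^\lambda}^G$ of this piece.

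First I would invoke Lemma~\ref{lem:restrict-semis}: $V$ is semisimple as a $G^\circ$-module, with all irreducible constituents lying in a single $G$-orbit in $\Irr(G^\circ)$. Pick any $\lambda \in \bXp$ such that $L(\lambda)$ appears as a $G^\circ$-summand of $V$, and let $V_\lambda \subset V$ be the $L(\lambda)$-isotypic component. For $g \in G$, the isomorphisms~\eqref{eqn:twist-standard} combined with Lemma~\ref{lem:twist-inner} imply that $g$ sends the $L(\mu)$-isotypic component to the $L({}^{\varpi(g)}\mu)$-isotypic component; in particular, $V_\lambda$ is stable under $G^\lambda = \varpi^{-1}(A^\lambda)$. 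Thus $V_\lambda$ is a $G^\lambda$-submodule of $V$ whose restriction to $G^\circ$ is a sum of copies of $L(\lambda)$, so by (the category-equivalence underlying) Proposition~\ref{prop:Glambda} there is a finite-dimensional $\sA^\lambda$-module $E$ and an isomorphism $V_\lambda \simto E \otimes L(\lambda)$ of $G^\lambda$-modules.

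Next, choose coset representatives $g_1=1, g_2, \ldots, g_r$ of $G/G^\lambda$; then $g_i V_\lambda$ is the $L({}^{\varpi(g_i)}\lambda)$-isotypic component of $V$, and the $\varpi(g_i) \cdot \lambda$ exhaust the $A$-orbit of $\lambda$. Hence $V = \bigoplus_{i=1}^r g_i V_\lambda$ as $G^\circ$-modules, giving $\dim V = [G:G^\lambda] \cdot \dim V_\lambda$. Applying Frobenius reciprocity to the $G^\lambda$-equivariant inclusion $V_\lambda \hookrightarrow V$ produces a $G$-module map
\[
\Phi : L(\lambda,E) = \Ind_{G^\lambda}^G(E \otimes L(\lambda)) \cong \Ind_{G^\lambda}^G V_\lambda \longrightarrow V.
\]
The image of $\Phi$ contains $V_\lambda$ (from the trivial coset), is $G$-stable, and so contains every $g_i V_\lambda$; hence $\Phi$ is surjective. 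Using~\eqref{eqn:induce-restrict} for $G^\lambda \subset G$, the source also has dimension $[G:G^\lambda] \cdot \dim V_\lambda$, so $\Phi$ is an isomorphism.

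It remains to check that $E$ is simple. If $E' \subsetneq E$ were a nonzero proper $\sA^\lambda$-submodule, then under the equivalence of Proposition~\ref{prop:Glambda} it would correspond to a nonzero proper $G^\lambda$-submodule $E' \otimes L(\lambda)$ of $V_\lambda$, and since $G/G^\lambda$ is finite the functor $\Ind_{G^\lambda}^G$ is exact; therefore $\Ind_{G^\lambda}^G(E' \otimes L(\lambda))$ would be a nonzero proper $G$-submodule of $L(\lambda,E) \cong V$, contradicting the simplicity of $V$. I expect the only delicate point to be the $G^\lambda$-stability of $V_\lambda$ and the exactness of $\Ind_{G^\lambda}^G$ in the last step, both of which reduce to the finiteness of $A$ and the transformation rules~\eqref{eqn:twist-standard}; the rest is bookkeeping with Proposition~\ref{prop:Glambda} and Frobenius reciprocity.
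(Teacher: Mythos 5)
Your argument is correct, but it follows a genuinely different route from the paper's. The paper's proof starts from a surjection $V \twoheadrightarrow L(\lambda)$ of $G^\circ$-modules, uses Frobenius reciprocity to embed $V$ into $\Ind_{G^\circ}^G L(\lambda)$, and then analyzes the composition factors of that induced module via transitivity of induction and Proposition~\ref{prop:Glambda}, appealing to Lemma~\ref{lem:Ind-simple} to identify them as modules $L(\lambda,E)$. Your proof instead decomposes $V$ itself into $G^\circ$-isotypic components (using Lemma~\ref{lem:restrict-semis}), shows the $L(\lambda)$-isotypic piece $V_\lambda$ is a $G^\lambda$-submodule, applies the equivalence underlying Proposition~\ref{prop:Glambda} to write $V_\lambda \cong E \otimes L(\lambda)$, and reconstructs $V$ directly as $\Ind_{G^\lambda}^G V_\lambda$ by a surjectivity-plus-dimension-count argument. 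Your approach is more constructive (it exhibits $\lambda$, $E$, and the isomorphism explicitly) and avoids Lemma~\ref{lem:Ind-simple}, while the paper's argument is slightly shorter because it only needs to know that \emph{all} composition factors of $\Ind_{G^\circ}^G L(\lambda)$ have the desired form, rather than identifying $V$ exactly.

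One small imprecision worth fixing: the ``Frobenius reciprocity'' used in this paper (and in Jantzen) makes $\Ind$ the \emph{right} adjoint of restriction, so it converts a $G^\lambda$-map $V \to V_\lambda$ into a $G$-map $V \to \Ind_{G^\lambda}^G V_\lambda$ --- not the direction you wrote. As stated, applying Frobenius reciprocity ``to the inclusion $V_\lambda \hookrightarrow V$'' to get a map \emph{out of} the induced module uses the left-adjunction $\Hom_G(\Ind_{G^\lambda}^G N, M) \cong \Hom_{G^\lambda}(N, M)$, which is indeed valid when $G^\lambda$ has finite index (induction and coinduction coincide), but this needs a word of justification since the paper never records it. Two clean fixes: either note explicitly that the map $\Ind_{G^\lambda}^G V_\lambda \to V$ can be written down directly in the model of Lemma~\ref{lem:ind-tensor} as $a \otimes v \mapsto \iota(a)\cdot v$, and checked to be $G$-equivariant using~\eqref{eqn:induced-action}; or, more economically, replace your map $\Phi$ by the map $\Psi : V \to \Ind_{G^\lambda}^G V_\lambda$ coming from the $G^\lambda$-equivariant \emph{projection} $V \twoheadrightarrow V_\lambda$ (which exists because the non-$\lambda$ isotypic pieces are permuted by $G^\lambda$) via the paper's version of Frobenius reciprocity; $\Psi$ is nonzero, hence injective since $V$ is simple, and your dimension count then shows it is an isomorphism. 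Everything else in your argument --- the $G^\lambda$-stability of $V_\lambda$, the decomposition $V = \bigoplus_i g_i V_\lambda$, the dimension count, and the simplicity of $E$ --- is correct.
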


\begin{proof}
Certainly there exists $\lambda \in \bXp$ and a surjection of $G^\circ$-modules $V \twoheadrightarrow L(\lambda)$. By Frobenius reciprocity we deduce a nonzero (hence injective) morphism of $G$-modules $V \hookrightarrow \Ind_{G^\circ}^{G}(L(\lambda))$. So to conclude, it suffices to prove that all composition factors of $\Ind_{G^\circ}^{G}(L(\lambda))$ are of the form $L(\lambda,E)$ (with $E$ a simple $\sA^\lambda$-module). However, we have
\[
\Ind_{G^\circ}^{G}(L(\lambda)) \cong \Ind_{G^\lambda}^{G} \bigl( \Ind_{G^\circ}^{G^\lambda}(L(\lambda)) \bigr).
\]
The restriction of $\Ind_{G^\circ}^{G^\lambda}(L(\lambda))$ to $G^\circ$ is a direct sum of copies of $L(\lambda)$ by~\eqref{eqn:induce-restrict} applied to $G^\lambda$. Therefore, all of its composition factors are of the form $E \otimes L(\lambda)$ with $E$ a simple $\sA^\lambda$-module by Proposition~\ref{prop:Glambda}. Since the functor $\Ind_{G^\lambda}^{G}$ is exact (by Lemma~\ref{lem:ind-tensor}, or by~\cite[Corollary~I.5.13]{jantzen}) and sends simple $G^\lambda$-modules of the form $E \otimes L(\lambda)$ to simple $G$-modules by Lemma~\ref{lem:Ind-simple}, the claim follows.
\end{proof}

\subsection{Conjugation}
\label{ss:conj}

It now remains to understand when two modules of the form $L(\lambda,E)$ are isomorphic. For this, we need to analyze the relation between this construction applied to a dominant weight, and to a twist of this dominant weight by an element of $A$.

So, let $\lambda \in \bXp$, and $a \in A$. Then we have
\[
A^{{}^a \hspace{-1pt} \lambda} = a A^\lambda a^{-1}, \quad G^{{}^a \hspace{-1pt} \lambda} = \iota(a) G^\lambda \iota(a)^{-1},
\]
and we can choose as $L({}^a \hspace{-1pt} \lambda)$ the module ${}^{\iota(a)} \hspace{-1pt} L(\lambda)$, cf.~\eqref{eqn:twist-standard}.

Let us choose isomorphisms $\theta_b : L(\lambda) \simto {}^{\iota(b)} \hspace{-1pt} L(\lambda)$ for all $b \in A^\lambda$. Again for $b \in A^\lambda$, we can consider the isomorphism
\begin{multline*}
\tilde{\theta}_{aba^{-1}} : L({}^a \hspace{-1pt} \lambda) 
=
{}^{\iota(a)} \hspace{-1pt} L(\lambda) \xrightarrow{\theta_b} {}^{\iota(a) \iota(b)} \hspace{-1pt} L(\lambda) = {}^{\iota(a) \iota(b) \iota(a)^{-1}} \hspace{-1pt} \bigl({}^{\iota(a)} \hspace{-1pt} L(\lambda) \bigr) \\
= {}^{\iota(a)\iota(b)\iota(a)^{-1}} \hspace{-1pt} \bigl(L({}^{a} \hspace{-1pt} \lambda) \bigr)
\xrightarrow[\sim]{\iota(aba^{-1})^{-1} \iota(a) \iota(b) \iota(a)^{-1} \cdot (-)} {}^{\iota(aba^{-1})} \hspace{-1pt} L({}^a \hspace{-1pt} \lambda).
\end{multline*}
(Here, the last isomorphism means the action of $\iota(aba^{-1})^{-1} \iota(a) \iota(b) \iota(a)^{-1}$ on $L({}^a \hspace{-1pt} \lambda)$, or in other words the action of $\iota(a)^{-1} \iota(aba^{-1})^{-1} \iota(a) \iota(b)$ on $L(\lambda)$.) 

The following claim can be checked directly from the definitions.

\begin{lem}
\label{lem:cocycle-conjugation}
For any $b,c \in A^\lambda$, we have
\[
\gamma(aca^{-1},aba^{-1}) \circ \tilde{\theta}_{aba^{-1}} \circ \tilde{\theta}_{aca^{-1}} = \alpha(c,b) \cdot \tilde{\theta}_{acba^{-1}}
\]
(where here $\gamma(aca^{-1},aba^{-1})$ means the action of this element on $L({}^a \hspace{-1pt} \lambda)$).
\end{lem}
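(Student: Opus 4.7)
The plan is to unpack each of the maps $\tilde\theta_{aba^{-1}}$, $\tilde\theta_{aca^{-1}}$, and $\tilde\theta_{acba^{-1}}$ as explicit linear endomorphisms of the common underlying vector space $L(\lambda)$, and then verify the identity by a direct computation using the cocycle relation defining $\alpha$ together with the relation $\iota(a)\iota(b)=\iota(ab)\gamma(a,b)$.

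First, I would check that for any $b\in A^\lambda$, the composition of steps in the definition of $\tilde\theta_{aba^{-1}}$ collapses, as a linear map on $L(\lambda)$, to
\[
\tilde\theta_{aba^{-1}}(v) = \iota(a)^{-1}\iota(aba^{-1})^{-1}\iota(a)\iota(b)\cdot \theta_b(v).
\]
Indeed, the first arrow applies $\theta_b$; the middle ``$=$'' is just the identity of the underlying vector space (using ${}^g({}^hV)={}^{gh}V$); and the last arrow acts, on $L({}^a\lambda)={}^{\iota(a)}L(\lambda)$, by $\iota(aba^{-1})^{-1}\iota(a)\iota(b)\iota(a)^{-1}$, which on the underlying $L(\lambda)$ is the action of its conjugate by $\iota(a)$, giving the displayed formula. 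The analogous formula for $\tilde\theta_{aca^{-1}}$ and for $\tilde\theta_{acba^{-1}}$ follows by replacing $b$ by $c$ and by $cb$ respectively.

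Next I would compute the composite $\tilde\theta_{aba^{-1}}\circ\tilde\theta_{aca^{-1}}$, moving the element of $G^\circ$ produced by $\tilde\theta_{aca^{-1}}$ across $\theta_b$ by means of the equivariance identity $\theta_b(g\cdot w)=\ad(\iota(b)^{-1})(g)\cdot\theta_b(w)$ from~\eqref{eqn:def-thetaa}. After this the $\iota(b)^{\pm 1}$ factors cancel and one is left with
\[
\iota(a)^{-1}\iota(aba^{-1})^{-1}\iota(aca^{-1})^{-1}\iota(a)\iota(c)\iota(b)\cdot \theta_b\bigl(\theta_c(v)\bigr).
\]
Then the defining cocycle identity $\phi_{c,b}\circ {}^{\iota(c)}\theta_b\circ\theta_c=\alpha(c,b)\cdot\theta_{cb}$, which on the level of linear maps reads $\gamma(c,b)\cdot \theta_b(\theta_c(v)) = \alpha(c,b)\cdot \theta_{cb}(v)$, lets one replace $\theta_b\theta_c(v)$ by $\alpha(c,b)\gamma(c,b)^{-1}\theta_{cb}(v)$. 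The factor $\iota(c)\iota(b)\gamma(c,b)^{-1}=\iota(cb)$ simplifies the string on the left.

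Finally, since $\gamma(aca^{-1},aba^{-1})$ is to be understood as acting on $L({}^a\lambda)$, on the underlying vector space it acts by $\iota(a)^{-1}\gamma(aca^{-1},aba^{-1})\iota(a)$. Post-composing, the identity
\[
\iota(aca^{-1})\iota(aba^{-1})=\iota(acba^{-1})\gamma(aca^{-1},aba^{-1})
\]
reduces the product $\gamma(aca^{-1},aba^{-1})\iota(aba^{-1})^{-1}\iota(aca^{-1})^{-1}$ to $\iota(acba^{-1})^{-1}$, and the remaining expression matches $\alpha(c,b)\cdot\tilde\theta_{acba^{-1}}(v)$ exactly, by Step~1 applied to $cb$. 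The main obstacle is purely bookkeeping: one must consistently track that each ``action by $g\in G^\circ$'' is the action on the appropriate twist of $L(\lambda)$, converting to an action on $L(\lambda)$ itself via conjugation by $\iota(a)$ whenever $L({}^a\lambda)={}^{\iota(a)}L(\lambda)$ is involved. No further input beyond~\eqref{eqn:gamma-product}, \eqref{eqn:def-thetaa}, and the definition of $\alpha$ is needed.
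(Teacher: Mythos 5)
Your proposal is correct; the paper itself omits the proof, stating only that the lemma ``can be checked directly from the definitions,'' and your argument is precisely that direct verification. In particular, the closed-form expression
\[
\tilde\theta_{aba^{-1}}(v)=\iota(a)^{-1}\iota(aba^{-1})^{-1}\iota(a)\iota(b)\cdot\theta_b(v)
\]
(as an endomorphism of the underlying vector space $L(\lambda)$), the transport of the $G^\circ$-element across $\theta_b$ via~\eqref{eqn:def-thetaa}, the substitution $\gamma(c,b)\cdot\theta_b\theta_c=\alpha(c,b)\theta_{cb}$, and the final simplification $\gamma(aca^{-1},aba^{-1})\iota(aba^{-1})^{-1}\iota(aca^{-1})^{-1}=\iota(acba^{-1})^{-1}$ are all exactly right.
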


If $\sA^\lambda$ and its basis $\{\rho_b : b \in A^\lambda\}$ are defined in terms of the isomorphisms $\{\theta_b : b \in A^\lambda\}$ and if $\sA^{{}^a \hspace{-1pt} \lambda}$ and its basis $\{\tilde{\rho}_b : b \in A^{{}^a \hspace{-1pt} \lambda}\}$ are defined in terms of the isomorphisms $\{\tilde{\theta}_a : a \in A^{{}^a \hspace{-1pt} \lambda}\}$, then Lemma~\ref{lem:cocycle-conjugation} allows us to compare the cocycles that arise in the definitions of $\sA^\lambda$ and $\sA^{{}^a \hspace{-1pt} \lambda}$. More precisely, this lemma shows that the assignment $\rho_b \mapsto \tilde{\rho}_{aba^{-1}}$ defines an algebra isomorphism $\xi_\lambda^a : \sA^\lambda \simto \sA^{{}^a \hspace{-1pt} \lambda}$. 

The isomorphism $\xi_\lambda^a$ can be described more canonically as follows. Recall that Proposition~\ref{prop:end-induced} provides canonical identifications
\[
(\sA^\lambda)^\op \simto \End_{G^\lambda} \bigl( \Ind_{G^\circ}^{G^\lambda} (L(\lambda)) \bigr), \quad (\sA^{{}^a \hspace{-1pt} \lambda})^\op \simto \End_{G^\lambda} \bigl( \Ind_{G^\circ}^{G^\lambda} (L({}^a \hspace{-1pt} \lambda)) \bigr).
\]
One can check that
under these identifications, the automorphism $\xi^a_\lambda$ is given by the isomorphism
\[
\End_{G^\lambda} \bigl( \Ind_{G^\circ}^{G^\lambda} (L(\lambda)) \bigr) = \End_{G^{ {}^a \hspace{-1pt} \lambda}} \bigl( {}^{\iota(a)} \hspace{-1pt} \Ind_{G^\circ}^{G^\lambda} (L(\lambda)) \bigr) \simto \End_{G^{ {}^a \hspace{-1pt} \lambda}} \bigl( \Ind_{G^\circ}^{G^{{}^a \hspace{-1pt} \lambda}} (L({}^a \hspace{-1pt} \lambda)) \bigr)
\]
(where we use the notation of Remark~{\rm \ref{rmk:twist-Ind}}).

The properties of these isomorphisms that we will need below are summarized in the following lemma.

\begin{lem}
\label{lem:properties-xi}
Let $\lambda \in \bX^+$.
\begin{enumerate}
\item
\label{it:isom-product}
If $a,b \in A$, then we have $\xi_\lambda^{ab} = \xi_{{}^b \hspace{-1pt} \lambda}^a \circ \xi^b_\lambda$.
\item
\label{it:isom-inner}
If $a \in A^\lambda$, then $\xi_\lambda^a$ is an inner automorphism of $\sA^\lambda$.
\end{enumerate}
\end{lem}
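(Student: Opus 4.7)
The plan is to use the canonical description of $\xi^a_\lambda$ in terms of endomorphism algebras of induced modules given in the paragraph following Proposition~\ref{prop:end-induced}; this keeps the argument functorial and avoids cocycle bookkeeping. Recall that this description identifies $\xi^a_\lambda$ (after passing to opposite algebras via Proposition~\ref{prop:end-induced}) with the composition
\[
\End_{G^\lambda} \bigl( \Ind_{G^\circ}^{G^\lambda} (L(\lambda)) \bigr) = \End_{G^{ {}^a \hspace{-1pt} \lambda}} \bigl( {}^{\iota(a)} \hspace{-1pt} \Ind_{G^\circ}^{G^\lambda} (L(\lambda)) \bigr) \simto \End_{G^{ {}^a \hspace{-1pt} \lambda}} \bigl( \Ind_{G^\circ}^{G^{{}^a \hspace{-1pt} \lambda}} (L({}^a \hspace{-1pt} \lambda)) \bigr),
\]
where the first step is tautological (a linear endomorphism commutes with $G^\lambda$ iff it commutes with its $\ad(\iota(a))$-conjugate $G^{{}^a \hspace{-1pt} \lambda}$), and the second is induced by Remark~\ref{rmk:twist-Ind}, combined with the convention $L({}^a \hspace{-1pt} \lambda) := {}^{\iota(a)} \hspace{-1pt} L(\lambda)$ from~\eqref{eqn:twist-standard}.

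For part~\eqref{it:isom-product}, I would compare the two routes from $\End_{G^\lambda}(\Ind_{G^\circ}^{G^\lambda}(L(\lambda)))$ to $\End_{G^{{}^{ab} \hspace{-1pt} \lambda}}(\Ind_{G^\circ}^{G^{{}^{ab} \hspace{-1pt} \lambda}}(L({}^{ab} \hspace{-1pt} \lambda)))$. The composition $\xi^a_{{}^b \hspace{-1pt} \lambda} \circ \xi^b_\lambda$ twists successively by $\iota(b)$ and then by $\iota(a)$, so it passes through the module ${}^{\iota(a)\iota(b)} \hspace{-1pt} \Ind_{G^\circ}^{G^\lambda}(L(\lambda))$, whereas $\xi^{ab}_\lambda$ uses the twist by $\iota(ab)$. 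Because $\iota(a)\iota(b) = \iota(ab)\gamma(a,b)$ with $\gamma(a,b) \in G^\circ$, Lemma~\ref{lem:twist-inner} supplies a $G^\circ$-module isomorphism ${}^{\iota(a)\iota(b)} \hspace{-1pt} L(\lambda) \simto {}^{\iota(ab)} \hspace{-1pt} L(\lambda)$ given by the action of $\gamma(a,b)^{-1}$; applying $\Ind_{G^\circ}^{G^{{}^{ab} \hspace{-1pt} \lambda}}$ turns this into a $G^{{}^{ab} \hspace{-1pt} \lambda}$-equivariant isomorphism of the two induced modules. The crucial point is that this identification is realized by the linear action of an element of $G^{{}^{ab} \hspace{-1pt} \lambda}$ (namely $\gamma(a,b)^{-1}$), which \emph{a fortiori} commutes with every $G^{{}^{ab} \hspace{-1pt} \lambda}$-equivariant endomorphism, so the induced conjugation on endomorphism algebras is the identity. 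Consequently the two routes produce identical algebra isomorphisms, proving $\xi^{ab}_\lambda = \xi^a_{{}^b \hspace{-1pt} \lambda} \circ \xi^b_\lambda$.

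For part~\eqref{it:isom-inner}, assume $a \in A^\lambda$, so ${}^a \hspace{-1pt} \lambda = \lambda$, $G^{{}^a \hspace{-1pt} \lambda} = G^\lambda$, and $L({}^a \hspace{-1pt} \lambda) = {}^{\iota(a)} \hspace{-1pt} L(\lambda)$ is identified with $L(\lambda)$ via $\theta_a^{-1}$. Then $\xi^a_\lambda$ is an automorphism of $\End_{G^\lambda}(\Ind_{G^\circ}^{G^\lambda}(L(\lambda)))$. Applying Lemma~\ref{lem:twist-inner} to the element $\iota(a) \in G^\lambda$, the action of $\iota(a)^{-1}$ yields a $G^\lambda$-module isomorphism $\Ind_{G^\circ}^{G^\lambda}(L(\lambda)) \simto {}^{\iota(a)} \hspace{-1pt} \Ind_{G^\circ}^{G^\lambda}(L(\lambda))$. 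Composing this with the Remark~\ref{rmk:twist-Ind} isomorphism and with the identification induced by $\theta_a^{-1}$ produces an honest automorphism $u$ of $\Ind_{G^\circ}^{G^\lambda}(L(\lambda))$ as a $G^\lambda$-module, i.e.\ an invertible element of $\End_{G^\lambda}(\Ind_{G^\circ}^{G^\lambda}(L(\lambda)))$. Unwinding the canonical description of $\xi^a_\lambda$ shows that it acts on this endomorphism algebra as conjugation by $u$; since this conjugation is inner in $\End_{G^\lambda}(\Ind_{G^\circ}^{G^\lambda}(L(\lambda)))$ and inner-ness is preserved by passage to the opposite algebra, we conclude that $\xi^a_\lambda$ is inner in $\sA^\lambda$.

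The main obstacle is the bookkeeping in part~\eqref{it:isom-product}: one must carefully verify that the ``correction'' isomorphism bridging ${}^{\iota(a)\iota(b)} \hspace{-1pt} L(\lambda)$ and ${}^{\iota(ab)} \hspace{-1pt} L(\lambda)$, which is the only discrepancy between the two routes, is realized by the action of an element of $G^\lambda$ and therefore induces the identity on $G^\lambda$-equivariant endomorphisms. Once this observation is in place, both parts reduce to a straightforward unwinding of the canonical picture, and the combinatorial formula $\xi^a_\lambda(\rho_b) = \tilde\rho_{aba^{-1}}$ from the main text is not needed.
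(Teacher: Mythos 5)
Your argument follows the same strategy as the paper: realize each $\xi^a_\lambda$ as conjugation by the canonical isomorphism of induced modules, observe that the two routes to $\sA^{{}^{ab}\hspace{-1pt}\lambda}$ differ only because $\iota(a)\iota(b)$ and $\iota(ab)$ differ by $\gamma(a,b) \in G^\circ$, and conclude that since the resulting correction is realized by the action of $\gamma(a,b)$ on the induced module, conjugation is unaffected. One small caution: the ``correction'' at the level of induced modules is \emph{not} literally the linear action of $\gamma(a,b)^{-1}$ as an element of $G^{{}^{ab}\hspace{-1pt}\lambda}$ on the twisted module (that would involve conjugating $\gamma(a,b)$ by $\iota(ab)$); it is the $G^\lambda$-action of $\gamma(a,b)^{-1}$ on $\Ind_{G^\circ}^{G^\lambda}(L(\lambda))$, as the paper's explicit ``function-space'' computation makes precise — but since $\gamma(a,b) \in G^\circ \subset G^\lambda$ this still commutes with $\End_{G^\lambda}(\Ind_{G^\circ}^{G^\lambda}(L(\lambda)))$, so your conclusion (and your argument for part~\eqref{it:isom-inner}) is correct.
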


\begin{proof}
\eqref{it:isom-product}
To simplify notation, we set $\mu := {}^{ab} \hspace{-1pt} \lambda$.
Note that the simple $G^\circ$-modules of highest weight $\mu$ used in the definitions of $\xi_\lambda^{ab}$ and $\xi_{{}^b \hspace{-1pt} \lambda}^a \circ \xi^b_\lambda$ are different: for the former we use the module $L_1(\mu) := {}^{\iota(ab)} \hspace{-1pt} L(\lambda)$, while for the latter we use the module $L_2(\mu) := {}^{\iota(a) \iota(b)} \hspace{-1pt} L(\lambda)$. There exists a canonical isomorphism
\begin{equation}
\label{eqn:isom-algebras-product}
L_1(\mu) \simto L_2(\mu),
\end{equation}
given by the action of $\gamma(a,b)^{-1}$ on $L(\lambda)$ (i.e.~the inverse of the isomorphism denoted $\phi_{a,b}$ in~\S\ref{ss:component}).

Our algebras are all defined as endomorphisms of some induced module, which can be described in terms of functions with values in the vector space underlying the representation $L(\lambda)$. From this point of view, $\xi_{{}^b \hspace{-1pt} \lambda}^a \circ \xi^b_\lambda$ is conjugation by the isomorphism of vector spaces $\Ind_{G^\circ}^{G^\lambda}(L(\lambda)) \simto \Ind_{G^\circ}^{G^\mu}(L_2(\mu))$ sending functions $G^\lambda \to L(\lambda)$ to functions $G^\mu \to L(\lambda)$ and given by $\phi \mapsto \phi(\iota(b)^{-1} \iota(a)^{-1} (-) \iota(a) \iota(b))$, while $\xi_\lambda^{ab}$ is conjugation by the isomorphism $\Ind_{G^\circ}^{G^\lambda}(L(\lambda)) \simto \Ind_{G^\circ}^{G^\mu}(L_1(\mu))$ given by $\phi \mapsto \phi(\iota(ab) (-) \iota(ab)^{-1})$. Taking into account the isomorphism~\eqref{eqn:isom-algebras-product}, we have to check that conjugation by the isomorphism given by
\begin{equation}\label{eqn:isom-algebras-conj1}
\phi \mapsto \gamma(a,b) \cdot \phi(\iota(b)^{-1} \iota(a)^{-1} (-) \iota(a) \iota(b))
\end{equation}
(where $\gamma(a,b) \cdot (-)$ means the action of $\gamma(a,b) \in G^\circ$ on $L(\lambda)$) coincides with conjugation by the isomorphism given by
\begin{equation}\label{eqn:isom-algebras-conj2}
\phi \mapsto \phi(\iota(ab) (-) \iota(ab)^{-1}).
\end{equation}
However, since $\gamma(a,b)$ belongs to $G^\circ$, the functions $\phi$ we consider satisfy
\begin{multline*}
\gamma(a,b) \cdot \phi(\iota(b)^{-1} \iota(a)^{-1} (-) \iota(a) \iota(b)) = \phi(\iota(b)^{-1} \iota(a)^{-1} (-) \iota(a) \iota(b) \gamma(a,b)^{-1}) \\
= \phi(\iota(b)^{-1} \iota(a)^{-1} (-) \iota(ab)) = \phi(\gamma(a,b)^{-1} \iota(ab)^{-1} (-) \iota(ab)).
\end{multline*}
Thus, the isomorphisms~\eqref{eqn:isom-algebras-conj1} and~\eqref{eqn:isom-algebras-conj2} do \emph{not} coincide, but they differ only by the action of an element of $G^\lambda$ (which, in fact, even belongs to $G^\circ$) on $\Ind_{G^\circ}^{G^\lambda} (L(\lambda))$. Therefore, conjugation by either~\eqref{eqn:isom-algebras-conj1} or~\eqref{eqn:isom-algebras-conj2} induces the \emph{same} isomorphism of algebras $\End_{G^\lambda}(\Ind_{G^\circ}^{G^\lambda}(L(\lambda))) \simto \End_{G^\mu} ( \Ind_{G^\circ}^{G^\mu} (L_1(\mu)) )$.

\eqref{it:isom-inner}
By the comments preceding the statement, $\xi^a_\lambda$ is conjugation by an isomorphism $\Ind_{G^\circ}^{G^\lambda} (L(\lambda)) \simto \Ind_{G^\circ}^{G^{{}^a \hspace{-1pt} \lambda}} (L({}^a \hspace{-1pt} \lambda))$. If $a \in A^\lambda$ then this isomorphism defines an invertible element of $\sA^\lambda$, so that $\xi^a_\lambda$ is indeed an inner automorphism.
\end{proof}

Given $a \in A$ and $\lambda \in \bXp$, the isomorphism $\xi^a_\lambda$ defines a bijection between the set of isomorphism classes of simple $\sA^\lambda$-modules and the set of isomorphism classes of simple $\sA^{{}^a \hspace{-1pt} \lambda}$-modules.
From Lemma~\ref{lem:properties-xi}\eqref{it:isom-product} we see
that this operation
defines an action of the group $A$ on the set of pairs $(\lambda, E)$ where $\lambda \in \bXp$ and $E$ is a simple $\sA^\lambda$-module.
Moreover,
it follows from Lemma~\ref{lem:properties-xi}\eqref{it:isom-inner} that the induced
action of $A^\lambda$ on the set of isomorphism classes of simple $\sA^\lambda$-modules is trivial.

\begin{lem}
\label{lem:conjugation-isom}
Let $\lambda \in \bXp$, and let $E$ be a simple $\sA^\lambda$-module. Let $a \in A$, and let $E'$ be the simple $\sA^{{}^a \hspace{-1pt} \lambda}$-module deduced from $E$ via the isomorphism $\xi^a_\lambda : \sA^\lambda \simto \sA^{{}^a \hspace{-1pt} \lambda}$. Then there exists an isomorphism of $G$-modules
\[
L(\lambda,E) \simto L({}^a \hspace{-1pt} \lambda, E').
\]
\end{lem}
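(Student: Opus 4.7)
The strategy is to realize $L({}^a\hspace{-1pt}\lambda, E')$ as an inner twist of $L(\lambda,E)$ by $\iota(a) \in G$, and then invoke Lemma~\ref{lem:twist-inner} to conclude.

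\textbf{Reduction to the inducing modules.} Since $\ad(\iota(a))(G^\lambda) = G^{{}^a \hspace{-1pt} \lambda}$, applying Remark~\ref{rmk:twist-Ind} to the automorphism $\ad(\iota(a))$ of $G$ yields a canonical isomorphism of $G$-modules
\[
{}^{\iota(a)} \hspace{-1pt} \Ind_{G^\lambda}^G(E \otimes L(\lambda)) \simto \Ind_{G^{{}^a\hspace{-1pt}\lambda}}^G\bigl({}^{\iota(a)} \hspace{-1pt} (E \otimes L(\lambda))\bigr).
\]
On the other hand, because $\iota(a) \in G$, Lemma~\ref{lem:twist-inner} gives ${}^{\iota(a)} \hspace{-1pt} L(\lambda,E) \cong L(\lambda,E)$ as $G$-modules. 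Thus the proof reduces to constructing an isomorphism of $G^{{}^a \hspace{-1pt} \lambda}$-modules
\[
{}^{\iota(a)} \hspace{-1pt} (E \otimes L(\lambda)) \cong E' \otimes L({}^a\hspace{-1pt}\lambda).
\]

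\textbf{Comparison via Proposition~\ref{prop:Glambda}.} Both sides are $G^{{}^a\hspace{-1pt}\lambda}$-modules whose restriction to $G^\circ$ is a direct sum of copies of $L({}^a\hspace{-1pt}\lambda) = {}^{\iota(a)} \hspace{-1pt} L(\lambda)$: for the right-hand side this is by definition, while for the left-hand side it follows from the fact that $E \otimes L(\lambda)$ restricts over $G^\circ$ to copies of $L(\lambda)$, and twisting by $\ad(\iota(a))$ sends $L(\lambda)$ to ${}^{\iota(a)} \hspace{-1pt} L(\lambda)$. By Proposition~\ref{prop:Glambda}, it therefore suffices to verify that $\Hom_{G^\circ}(L({}^a\hspace{-1pt}\lambda), -)$ sends the two modules to isomorphic $\sA^{{}^a\hspace{-1pt}\lambda}$-modules. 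The right-hand side gives $E'$ by construction. The left-hand side comes with a natural vector-space identification
\[
\Hom_{G^\circ}\bigl(L({}^a\hspace{-1pt}\lambda),\, {}^{\iota(a)} \hspace{-1pt} (E \otimes L(\lambda))\bigr) = \Hom_{G^\circ}\bigl({}^{\iota(a)} \hspace{-1pt} L(\lambda),\, {}^{\iota(a)} \hspace{-1pt} (E \otimes L(\lambda))\bigr) \cong \Hom_{G^\circ}(L(\lambda), E \otimes L(\lambda)) \cong E,
\]
where the middle isomorphism is induced by the twist operation on morphisms.

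\textbf{Matching actions, and the main obstacle.} The remaining task—and the principal technical point—is to verify that under the identification above, the $\sA^{{}^a\hspace{-1pt}\lambda}$-action on the twisted Hom space coincides with the $\sA^{{}^a\hspace{-1pt}\lambda}$-action on $E'$, namely the $\sA^\lambda$-action on $E$ transported along $\xi^a_\lambda : \sA^\lambda \simto \sA^{{}^a\hspace{-1pt}\lambda}$. This is a direct but delicate bookkeeping computation: substituting the definition~\eqref{eqn:L-module} of the $G^\lambda$-action on $E \otimes L(\lambda)$, the explicit formula for $\tilde\theta_{aba^{-1}}$ preceding Lemma~\ref{lem:cocycle-conjugation}, and the description of $\xi^a_\lambda$ recalled after Proposition~\ref{prop:end-induced}, one checks that the action of $\tilde\rho_{aba^{-1}}$ on the Hom space agrees with the action of $\rho_b$ on $E$. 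Granted this, Proposition~\ref{prop:Glambda} delivers the required $G^{{}^a\hspace{-1pt}\lambda}$-module isomorphism, and inducing to $G$ yields the asserted isomorphism $L(\lambda,E) \simto L({}^a\hspace{-1pt}\lambda, E')$.
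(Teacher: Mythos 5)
Your proposal follows essentially the same route as the paper: twist the induced module by $\iota(a)$, invoke Lemma~\ref{lem:twist-inner} to identify ${}^{\iota(a)}L(\lambda,E)\cong L(\lambda,E)$, and use Remark~\ref{rmk:twist-Ind} to reduce to comparing the inducing $G^{{}^a\hspace{-1pt}\lambda}$-modules ${}^{\iota(a)}(E\otimes L(\lambda))$ and $E'\otimes L({}^a\hspace{-1pt}\lambda)$. That much is exactly right, and it is the heart of the paper's argument.

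Where you diverge is in the last step: you propose to compare these two $G^{{}^a\hspace{-1pt}\lambda}$-modules via Proposition~\ref{prop:Glambda}, i.e.\ via an identification of the associated Hom-spaces as $\sA^{{}^a\hspace{-1pt}\lambda}$-modules. That detour is more machinery than is needed, and you then declare the decisive step (matching the $\sA^{{}^a\hspace{-1pt}\lambda}$-actions) to be a ``direct but delicate bookkeeping computation'' without carrying it out, which is the one non-routine point of the whole lemma. The paper's proof avoids this by exploiting the choices already made: once one takes $L({}^a\hspace{-1pt}\lambda)={}^{\iota(a)}L(\lambda)$ and defines the isomorphisms $\tilde\theta_{aba^{-1}}$ and the algebra isomorphism $\xi^a_\lambda$ as in~\S\ref{ss:conj}, the two $G^{{}^a\hspace{-1pt}\lambda}$-module structures on the \emph{same} vector space $E\otimes L(\lambda)$ literally coincide, so one has an equality ${}^{\iota(a)}(E\otimes L(\lambda))=E'\otimes L({}^a\hspace{-1pt}\lambda)$ rather than merely an isomorphism to be hunted for. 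Unwinding: writing $\iota(a)^{-1}\iota(aba^{-1})\iota(a)=\iota(b)\delta$ with $\delta\in G^\circ$, the formula for $\tilde\theta_{aba^{-1}}$ preceding Lemma~\ref{lem:cocycle-conjugation} gives $\tilde\theta_{aba^{-1}}^{-1}=\theta_b^{-1}\circ(\delta\cdot_{L(\lambda)}-)$, and plugging this into~\eqref{eqn:L-module} for both actions shows they agree term-by-term. I recommend you either do this short direct check in place of the Hom-space comparison, or at least perform the computation you've deferred, since as written the proposal identifies the right claim but does not actually establish it.
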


\begin{proof}
As above we choose for our simple $G^\circ$-module of highest weight ${}^a \hspace{-1pt} \lambda$ the module ${}^{\iota(a)} \hspace{-1pt} L(\lambda)$. Then conjugation by $\iota(a)$ induces an isomorphism $G^\lambda \simto G^{{}^a \hspace{-1pt} \lambda}$, and using the notation of Remark~\ref{rmk:twist-Ind} we have as $G^{{}^a \hspace{-1pt} \lambda}$-modules
\[
{}^{\iota(a)} \hspace{-1pt} (E \otimes L(\lambda)) = E' \otimes L({}^a \hspace{-1pt} \lambda).
\]
In view of Lemma~\ref{lem:twist-Ind} we deduce an isomorphism of $G$-modules
\[
{}^{\iota(a)} \hspace{-1pt} \Ind_{G^\lambda}^G(E \otimes L(\lambda)) \simto \Ind_{G^{{}^a \hspace{-1pt} \lambda}}^G(E' \otimes L({}^a \hspace{-1pt} \lambda)).
\]
Now by Lemma~\ref{lem:twist-inner} the left-hand side is isomorphic to $L(\lambda,E)$, and the claim follows.
\end{proof}

\subsection{Classification of simple \texorpdfstring{$G$}{G}-modules}

We denote by $\Irr(G)$ the set of isomorphism classes of simple $G$-modules. Now we can finally state the main result of this section.

\begin{thm}
\label{thm:disconn-class}
The assignment $(\lambda,E) \mapsto L(\lambda,E)$ induces a bijection
\[
\left.
\left\{
(\lambda, E) \, \left|\,
\begin{array}{c}
\text{$\lambda \in \bXp$ and $E$ an isom. class} \\
\text{of simple left $\sA^\lambda$-modules}
\end{array}
\right\} \right.
\right/ A \longleftrightarrow \Irr(G).
\]
\end{thm}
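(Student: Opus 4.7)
The theorem contains three assertions: that the assignment $(\lambda,E) \mapsto L(\lambda,E)$ descends to $A$-orbits, that the induced map to $\Irr(G)$ is surjective, and that it is injective. The first is Lemma~\ref{lem:conjugation-isom} (together with the fact, noted after Lemma~\ref{lem:properties-xi}, that $A^\lambda$ acts trivially on isomorphism classes of simple $\sA^\lambda$-modules, so the $A$-action on pairs is well-defined on isomorphism classes), and surjectivity is Lemma~\ref{lem:simple-Ind}. So the real task is injectivity.

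Suppose $L(\lambda,E) \cong L(\mu,F)$. First I would pin down the $A$-orbit of $\lambda$. The computation in the proof of Lemma~\ref{lem:Ind-simple} already gives the decomposition
\[
L(\lambda,E)|_{G^\circ} \;\cong\; \bigoplus_{a \in A/A^\lambda} L({}^a \hspace{-1pt} \lambda)^{\oplus \dim E},
\]
so the multiset of $G^\circ$-composition factors of $L(\lambda,E)$ determines exactly the $A$-orbit $A\cdot\lambda$ (with multiplicity $\dim E$). Consequently $\mu \in A\cdot\lambda$, and by Lemma~\ref{lem:conjugation-isom} I may replace $(\mu,F)$ by a representative of its $A$-orbit and assume $\mu = \lambda$ (which in turn forces $\dim E = \dim F$).

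The plan for the second step is to reconstruct the $\sA^\lambda$-module $E$ intrinsically from $L(\lambda,E)$ by isolating the $L(\lambda)$-isotypic component. By~\eqref{eqn:stab-subgroup}, the $L(\lambda)$-isotypic component $V_\lambda$ of $V := L(\lambda,F)|_{G^\circ}$ is stable under $G^\lambda$; by the display above, $\dim V_\lambda = \dim F \cdot \dim L(\lambda)$. On the other hand, the unit of the adjunction $(\Ind_{G^\lambda}^G, \mathrm{Res})$ applied to $F \otimes L(\lambda)$ yields a canonical $G^\lambda$-morphism
\[
F \otimes L(\lambda) \;\longrightarrow\; L(\lambda,F)|_{G^\lambda},
\]
which is injective by simplicity of $F \otimes L(\lambda)$ (Proposition~\ref{prop:Glambda}) and whose image is contained in $V_\lambda$ (the $G^\circ$-restriction of the source is $L(\lambda)$-isotypic). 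A dimension count then forces this map to be an isomorphism $F \otimes L(\lambda) \simto V_\lambda$ of $G^\lambda$-modules. The same argument gives $E \otimes L(\lambda) \cong L(\lambda,E)_\lambda$, so any isomorphism $L(\lambda,E) \simto L(\lambda,F)$ restricts to an isomorphism $E \otimes L(\lambda) \simto F \otimes L(\lambda)$ of $G^\lambda$-modules; Proposition~\ref{prop:Glambda} then yields $E \cong F$ as $\sA^\lambda$-modules.

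The main obstacle is this last step, specifically the identification of $V_\lambda$ with $F \otimes L(\lambda)$ as a $G^\lambda$-module (not merely as a $G^\circ$-module). The argument hinges on three ingredients working together: the $G^\lambda$-stability of the isotypic component coming from~\eqref{eqn:stab-subgroup}, the Frobenius adjunction providing a canonical $G^\lambda$-map in the right direction, and the dimension count from Step~1 forcing this map to be bijective.
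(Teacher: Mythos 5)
Your proof is correct and follows essentially the same strategy as the paper's: both reduce injectivity to identifying the $G^\lambda$-module $E \otimes L(\lambda)$ inside $\mathrm{Res}_{G^\lambda}\,L(\lambda,E)$ and then invoking Proposition~\ref{prop:Glambda}. The paper phrases this by composing the Frobenius-reciprocity map $f : V \to E' \otimes L(\lambda)$ with the canonical inclusion $E \otimes L(\lambda) \hookrightarrow \Ind_{G^\lambda}^G(E \otimes L(\lambda))$ and checking the composite is nonzero, whereas you characterize the image intrinsically as the $L(\lambda)$-isotypic component and close the argument by a dimension count; these are two renderings of the same idea.
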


\begin{proof}
From Lemma~\ref{lem:Ind-simple}, we see that the assignment $(\lambda,E) \mapsto L(\lambda,E)$ defines a map from the set of pairs $(\lambda,E)$ as in the statement to the set $\Irr(G)$. By Lemma~\ref{lem:conjugation-isom} this map factors through a map
\[
\left.
\left\{
(\lambda, E) \, \left|\,
\begin{array}{c}
\text{$\lambda \in \bXp$ and $E$ an isom. class} \\
\text{of simple left $\sA^\lambda$-modules}
\end{array}
\right\} \right.
\right/ A \to \Irr(G).
\]
By Lemma~\ref{lem:simple-Ind}, this latter map is surjective. Hence, all that remains is to prove that it is injective.

Let $(\lambda,E)$ and $(\lambda',E')$ be pairs as above.
Let $V = L(\lambda,E)$ and $V' = L(\lambda',E')$, and assume that $V \cong V'$. As a $G^\circ$-representation, $V$ is isomorphic to a direct sum of twists of $L(\lambda)$, and $V'$ is isomorphic to a direct sum of twists of $L(\lambda')$ (see the proof of Lemma~\ref{lem:Ind-simple}). Hence $L(\lambda)$ and $L(\lambda')$ are twists of each other, which implies that $\lambda$ and $\lambda'$ are in the same $A$-orbit. Therefore, we can (and shall) assume that $\lambda=\lambda'$. Fix some isomorphism $V \simto V'$, and consider the morphism of $G^\lambda$-modules $f : V \to E' \otimes L(\lambda)$ deduced by Frobenius reciprocity. 
If $g_1, \ldots, g_r$ are representatives of the cosets in $G/G^\lambda$, with $g_1=1_G$, then we have an isomorphism of $G^\circ$-modules
\[
 \Ind_{G^\lambda}^G (E \otimes L(\lambda)) \cong \bigoplus_{i=1}^r {}^{g_i} \hspace{-1pt} L(\lambda) \otimes E.
\]
If $i \neq 1$, then ${}^{g_i} \hspace{-1pt} L(\lambda)$ is not isomorphic to $L(\lambda)$. Hence $f$ is zero on the corresponding summand of $\Ind_{G^\lambda}^G (E \otimes L(\lambda))$. We deduce that the composition
\[
E \otimes L(\lambda) \hookrightarrow \Ind_{G^\lambda}^G (E \otimes L(\lambda)) \xrightarrow{f} E' \otimes L(\lambda),
\]
where the first morphism is again deduced from Frobenius reciprocity,
is nonzero. But this morphism is a morphism of $G^\lambda$-modules. Since $L(\lambda,E)$ and $L(\lambda,E')$ are simple, it must be an isomorphism, and by Proposition~\ref{prop:Glambda} this implies that $E \cong E'$ as $\sA^\lambda$-modules.
\end{proof}

\begin{rmk}\phantomsection
\label{rmk:classification-simples}
\begin{enumerate}
\item
\label{it:rmks-representatives}
As explained above Lemma~\ref{lem:conjugation-isom}, for any $\lambda \in \bXp$ the action of $A^\lambda$ on the set of isomorphism classes of irreducible $\sA^\lambda$-modules is trivial. Hence if $\Lambda \subset \bXp$ is a set of representatives of the $A$-orbits in $\bXp$, the quotient considered in the statement of Theorem~\ref{thm:disconn-class} can be described more explicitly as the set of pairs $(\lambda,E)$ where $\lambda \in \Lambda$ and $E$ is an isomorphism class of simple $\sA^\lambda$-modules.
\item
Assume that $\iota$ is a group morphism (so that $G$ is isomorphic to the semi-direct product $A \ltimes G^\circ$) and that moreover there exists a Borel subgroup $B \subset G^\circ$ such that $\iota(a)B\iota(a)^{-1}=B$ for any $a \in A$. Then if we define the standard and costandard $G^\circ$-modules using this Borel subgroup, the isomorphisms
\[
{}^{\iota(a)} \hspace{-1pt} \Delta(\lambda) \cong \Delta({}^a \hspace{-1pt} \lambda), \quad {}^{\iota(a)} \hspace{-1pt} \nabla(\lambda) \cong \nabla({}^a \hspace{-1pt} \lambda)
\]
(see~\eqref{eqn:twist-standard}) can be chosen in a canonical way. In fact, our assumptions imply that there exist unique $B$-stable lines in ${}^{\iota(a)} \hspace{-1pt} \Delta(\lambda)$ and $\Delta({}^a \hspace{-1pt} \lambda)$, and moreover that these lines coincide. Hence there exists a unique isomorphism of $G$-modules ${}^{\iota(a)} \hspace{-1pt} \Delta(\lambda) \simto \Delta({}^a \hspace{-1pt} \lambda)$ which restricts to the identity on these $B$-stable lines. Similar comments apply to ${}^{\iota(a)} \hspace{-1pt} \nabla(\lambda)$ and $\nabla({}^a \hspace{-1pt} \lambda)$.

In particular, the isomorphisms $\theta_a$ of~\S\ref{ss:twisted} can be chosen in a canonical way. Then the cocycle $\alpha$ will be trivial, so that in this case $\sA^\lambda$ is canonically isomorphic to the group algebra $\bk[A^\lambda]$.
\end{enumerate}
\end{rmk}

\subsection{Semisimplicity}

We finish this section with a criterion ensuring that the algebra $\sA^\lambda$ is semisimple unless $p$ is small.

\begin{lem}\label{lem:induced-semisimple}
Assume that $p \nmid |A|$.
If $V$ is a simple $G^\circ$-module, then $\Ind_{G^\circ}^G (V)$ is a semisimple $G$-module.
\end{lem}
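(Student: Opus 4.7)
The plan is to run a Maschke-type averaging argument exploiting the hypothesis $p \nmid |A|$. Set $M := \Ind_{G^\circ}^G(V)$. First I would observe that the restriction $M|_{G^\circ}$ is \emph{already} semisimple: by~\eqref{eqn:induce-restrict}, we have $M|_{G^\circ} \cong \bigoplus_{a \in A} {}^{\iota(a)} \hspace{-1pt} V$, and each summand is a simple $G^\circ$-module (twisting by an automorphism of $G^\circ$ is an equivalence of categories, so preserves simplicity). Consequently, for any $G$-submodule $N \subset M$, there exists a $G^\circ$-equivariant linear projection $\pi : M \to N$ with $\pi|_N = \id_N$.

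Next, I would define the averaged operator
\[
\bar\pi := \frac{1}{|A|} \sum_{a \in A} \iota(a) \circ \pi \circ \iota(a)^{-1} : M \to M,
\]
which makes sense because $|A|$ is invertible in $\bk$. A short calculation, using the $G^\circ$-equivariance of $\pi$ together with the normality of $G^\circ$, shows that each summand $\iota(a) \circ \pi \circ \iota(a)^{-1}$ is independent of the representative $\iota(a)$ chosen within its coset, and is itself $G^\circ$-equivariant. To check equivariance of $\bar\pi$ under $\iota(b)$ for $b \in A$, I would use the cocycle identity $\iota(b)\iota(a) = \iota(ba)\gamma(b,a)$ with $\gamma(b,a) \in G^\circ$: since $\pi$ commutes with the action of $\gamma(b,a)$, conjugation by $\iota(b)$ simply reindexes the sum via $a \mapsto ba$, leaving $\bar\pi$ invariant. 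Combined with the $G^\circ$-equivariance already established, this gives $G$-equivariance.

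Finally, since $N$ is $G$-stable, for $v \in N$ we have $\iota(a)^{-1} v \in N$, hence $\pi(\iota(a)^{-1} v) = \iota(a)^{-1} v$, and so each summand of $\bar\pi$ acts as the identity on $N$; thus $\bar\pi|_N = \id_N$. Therefore $\bar\pi$ is a $G$-equivariant projection of $M$ onto $N$, and $\ker(\bar\pi)$ is a $G$-stable complement. As $M$ is finite-dimensional, this proves that $M$ is a semisimple $G$-module. The argument is entirely standard; the only bookkeeping issue of any substance is the $\iota(b)$-equivariance check, handled by the cocycle relation above, so I anticipate no serious obstacle.
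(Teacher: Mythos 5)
Your proof is correct and takes a genuinely different route from the paper's. You run the classical Maschke averaging at the level of linear operators: start from a $G^\circ$-equivariant projection $\pi$ onto a $G$-submodule (available because $\Ind_{G^\circ}^G(V)$ restricted to $G^\circ$ is semisimple by~\eqref{eqn:induce-restrict}), average the conjugates $\iota(a)\pi\iota(a)^{-1}$ over $a \in A$ using $|A|^{-1} \in \bk$, and verify $G$-equivariance via the cocycle relation $\iota(b)\iota(a) = \iota(ba)\gamma(b,a)$ together with $G^\circ$-equivariance of $\pi$. All the checks you sketch go through (well-definedness of each summand on cosets, $G^\circ$-equivariance of each summand via normality, $\iota(b)$-equivariance by reindexing, and $\bar\pi|_N = \id_N$ using $G$-stability of $N$), and the conclusion follows since every $G$-submodule then has a $G$-stable complement. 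The paper instead argues homologically: it establishes the isomorphism $\Ext^n_G(X,Y) \cong \bigl(\Ext^n_{G^\circ}(X,Y)\bigr)^A$ for all $n$ (using exactness of $(-)^A$ when $p \nmid |A|$, plus the fact that injective $G$-modules restrict to injective $G^\circ$-modules), deduces an injection $\Ext^1_G(N,M) \hookrightarrow \Ext^1_{G^\circ}(N,M)$, and kills the extension class there using $G^\circ$-semisimplicity. Both methods use $p \nmid |A|$ at exactly one place; your approach is more elementary and self-contained, while the paper's buys the reusable $\Ext^n$ isomorphism, which is then cited again in the proofs of Lemma~\ref{lem:Ext1-Delta} and Lemma~\ref{lem:Ext-vanishing}, so the investment pays off later in the paper.
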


\begin{proof}
Let $M$ be a $G$-submodule of $\Ind_{G^\circ}^G (V)$, and let $N = \Ind_{G^\circ}^G (V)/M$. We will show that the image $c$ of the exact sequence $M \hookrightarrow \Ind_{G^\circ}^G (V) \twoheadrightarrow N$ in $\Ext^1_G(N,M)$ vanishes.

First we remark that for any two algebraic $G$-modules $X,Y$, the forgetful functor from $\Rep(G)$ to $\Rep(G^\circ)$ induces an isomorphism
\[
\Hom_G(X,Y) \simto \bigl( \Hom_{G^\circ}(X,Y) \bigr)^A.
\]
Under our assumptions the functor $(-)^A$ is exact. On the other hand, it is easily checked that the restriction of any injective $G$-module to $G^\circ$ is injective. Hence this isomorphism induces an isomorphism
\[
\Ext^n_G(X,Y) \simto \bigl( \Ext^n_{G^\circ}(X,Y) \bigr)^A
\]
for any $n \geq 0$. We deduce in particular that the forgetful functor induces an injection
\[
\Ext^1_G(N,M) \hookrightarrow \Ext^1_{G^\circ}(N,M).
\]
Hence to prove that $c=0$ it suffices to prove that the sequence $M \hookrightarrow \Ind_{G^\circ}^G (V) \twoheadrightarrow N$, considered as an exact sequence of $G^\circ$-modules, splits. This fact is clear since $\Ind_{G^\circ}^G (V)$ is semisimple as a $G^\circ$-module, see~\eqref{eqn:induce-restrict}.
\end{proof}

From this lemma (applied to the group $A^\lambda$) and Proposition~\ref{prop:end-induced} we deduce the following.

\begin{lem}\label{lem:semisimple-algebra}
If $p \nmid | A^\lambda|$, then
the algebra $\sA^\lambda$ is semisimple (and in fact isomorphic to a product of matrix algebras).
\end{lem}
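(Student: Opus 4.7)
The plan is to follow exactly the hint given just before the statement: combine Lemma~\ref{lem:induced-semisimple} (applied to the group $G^\lambda$, which has component group $A^\lambda$) with Proposition~\ref{prop:end-induced}. Concretely, since $p \nmid |A^\lambda|$, Lemma~\ref{lem:induced-semisimple} applies to the disconnected reductive group $G^\lambda$ and the simple $G^\circ$-module $L(\lambda)$, yielding that the induced module
\[
M := \Ind_{G^\circ}^{G^\lambda}(L(\lambda))
\]
is semisimple as a $G^\lambda$-module. Proposition~\ref{prop:end-induced} then gives a canonical identification $(\sA^\lambda)^{\op} \cong \End_{G^\lambda}(M)$, so it suffices to prove that $\End_{G^\lambda}(M)$ is a product of matrix algebras over $\bk$.

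For the structural statement, I would argue as follows. Decompose $M$ into its isotypic components, $M \cong \bigoplus_i S_i^{\oplus n_i}$, where the $S_i$ are pairwise non-isomorphic simple $G^\lambda$-modules (these exist by Lemma~\ref{lem:Ind-simple} and Proposition~\ref{prop:Glambda}, though I only need the abstract semisimple decomposition here). Since $\bk$ is algebraically closed and each $S_i$ is simple and finite-dimensional, Schur's lemma gives $\End_{G^\lambda}(S_i) = \bk$. Combining with $\Hom_{G^\lambda}(S_i, S_j) = 0$ for $i \neq j$, one obtains
\[
\End_{G^\lambda}(M) \cong \prod_i M_{n_i}(\bk),
\]
which is a product of matrix algebras. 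Taking the opposite algebra preserves this form (each $M_{n_i}(\bk)^{\op} \cong M_{n_i}(\bk)$ via transpose), so $\sA^\lambda$ is also a product of matrix algebras, and in particular semisimple.

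There is no real obstacle: the only substantive input is the hypothesis $p \nmid |A^\lambda|$, which is exactly what is needed to invoke Lemma~\ref{lem:induced-semisimple}. The remaining steps are purely formal consequences of Schur's lemma over an algebraically closed field together with the canonical identification of $\sA^\lambda$ already established in Proposition~\ref{prop:end-induced}.
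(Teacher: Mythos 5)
Your proof is correct and follows exactly the approach the paper intends: apply Lemma~\ref{lem:induced-semisimple} to $G^\lambda$ (whose component group is $A^\lambda$) to see that $\Ind_{G^\circ}^{G^\lambda}(L(\lambda))$ is a semisimple $G^\lambda$-module, then use Proposition~\ref{prop:end-induced} together with Schur's lemma over the algebraically closed field $\bk$ to identify $(\sA^\lambda)^{\op}$, and hence $\sA^\lambda$, with a product of matrix algebras. The paper only gives a one-line hint before the statement, and your write-up supplies precisely the expected details with no gaps.
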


\section{Highest weight structure}
\label{sec:disconn-hwt}

Our goal in this section is to prove that if 
$p \nmid |A|$, then the category $\Rep(G)$ of finite-dimensional $G$-modules admits a natural structure of a highest weight category.

For the beginning of the section, we continue with the setting of~\S\ref{ss:disconn} (not imposing any further assumption).

\subsection{The order}

If $(\lambda,E)$ is a pair as in Theorem~\ref{thm:disconn-class}, we denote by $[\lambda,E]$ the corresponding $A$-orbit.
We define a relation $<$ on the set of such orbits as follows:
\begin{equation}
\label{eqn:def-order}
[\lambda,E] < [\lambda',E']
\qquad\text{if}\qquad
\text{for some $a \in A$, we have ${}^a \hspace{-1pt} \lambda < \lambda'$.}
\end{equation}
(Here, the order on $\bX$ is the standard one, where $\lambda \leq \mu$ iff $\mu - \lambda$ is a sum of positive roots.)

\begin{lem}
The relation $<$ is a partial order.
\end{lem}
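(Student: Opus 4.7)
The plan is to verify the three properties of a (strict) partial order: well-definedness on $A$-orbits, irreflexivity, and transitivity. (Asymmetry then follows formally from irreflexivity and transitivity.)

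First I would check that the relation is well-defined on $A$-orbits, and independent of the modules $E, E'$ (which do not even appear in~\eqref{eqn:def-order}). The key point is that the $A$-action on $\bX$ preserves $\Phi^+$, as noted in~\S\ref{ss:disconn}. Therefore, for any $b \in A$ and $\mu, \nu \in \bX$, we have $\mu \leq \nu$ iff ${}^b \hspace{-1pt} \mu \leq {}^b \hspace{-1pt} \nu$, since ${}^b \hspace{-1pt}(\nu - \mu)$ is a sum of positive roots iff $\nu - \mu$ is. Given this, if we replace $\lambda$ by ${}^b \hspace{-1pt}\lambda$ the condition ``$\exists a$ with ${}^a \hspace{-1pt} \lambda < \lambda'$'' is equivalent after reparametrizing $a \mapsto ab^{-1}$; if we replace $\lambda'$ by ${}^c \hspace{-1pt}\lambda'$ the condition is equivalent after applying ${}^{c^{-1}}$ to both sides (using the $A$-invariance of~$\leq$).

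Next I would prove irreflexivity. Suppose for contradiction that $[\lambda,E] < [\lambda,E]$, so there is some $a \in A$ with ${}^a \hspace{-1pt}\lambda < \lambda$. Since $A$ is finite, $a$ has finite order $n$. Iteratively applying ${}^a$ and using its order-preserving property gives the descending chain
\[
\lambda > {}^a \hspace{-1pt}\lambda > {}^{a^2} \hspace{-1pt}\lambda > \cdots > {}^{a^n} \hspace{-1pt}\lambda = \lambda,
\]
which is a contradiction. So the relation is irreflexive.

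Finally I would verify transitivity. Suppose $[\lambda,E] < [\lambda',E']$ and $[\lambda',E'] < [\lambda'',E'']$, so there exist $a,b \in A$ with ${}^a \hspace{-1pt}\lambda < \lambda'$ and ${}^b \hspace{-1pt}\lambda' < \lambda''$. Applying ${}^b$ to the first inequality (using again that ${}^b$ preserves $\leq$) yields ${}^{ba} \hspace{-1pt}\lambda < {}^b \hspace{-1pt}\lambda' < \lambda''$, so $[\lambda,E] < [\lambda'',E'']$ via the element $ba \in A$. There is no real obstacle here — the whole argument hinges on the single ingredient that $A$ permutes $\Phi^+$, which has already been established. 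The main point worth flagging is the invariance of the order under $A$; once that is recorded, each of the three verifications is immediate.
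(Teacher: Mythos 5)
Your proof is correct, and your transitivity argument is the same as the paper's (apply the $A$-action to one of the inequalities, using that $A$ permutes $\Phi^+$). You also add an explicit check that the relation is well-defined on $A$-orbits, which the paper leaves implicit; this is harmless and arguably worth recording.

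Where you genuinely diverge is the irreflexivity/antisymmetry step. The paper argues as follows: if ${}^a\lambda < \lambda$, then since $A$ permutes the positive coroots, the element $2\rho^\vee$ (the sum of positive coroots) is $A$-invariant, so $\langle \lambda - {}^a\lambda, 2\rho^\vee\rangle = 0$; but $\lambda - {}^a\lambda$ is a nonzero sum of positive roots, so this pairing must be strictly positive, a contradiction. You instead exploit the fact that $a$ has finite order $n$: iterating the order-preserving map ${}^a$ produces a strictly descending chain $\lambda > {}^a\lambda > \cdots > {}^{a^n}\lambda = \lambda$, contradicting that $<$ on $\bX$ is a strict order. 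Both are sound; yours is more elementary (it uses only the finiteness of $A$ and that ${}^a$ preserves the order on $\bX$, with no reference to coroots), while the paper's pairing argument is a one-shot computation that avoids any iteration. Either is a perfectly good way to close the lemma.
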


\begin{proof}
Using the fact that for $a \in A$ and $\lambda, \mu \in \bX$ such that $\lambda \leq \mu$ we have ${}^a \hspace{-1pt} \lambda \leq {}^a \hspace{-1pt} \mu$ (because the $A$-action is linear and preserves positive roots), one can easily check that this relation is transitive. What remains to be seen is that there cannot exist classes $[\lambda,E]$, $[\lambda',E']$ such that
\[
[\lambda,E] < [\lambda',E'] < [\lambda,E].
\]
However, in this case we have ${}^a \hspace{-1pt} \lambda < \lambda$ for some $a \in A$. Since $a$ permutes the positive coroots of $G^\circ$, then
 if we denote by $2\rho^\vee$ the sum of these coroots we must have $\langle {}^a \hspace{-1pt} \lambda, 2\rho^\vee \rangle = \langle \lambda, 2\rho^\vee \rangle$, hence $\langle \lambda - {}^a \hspace{-1pt} \lambda, 2\rho^\vee \rangle=0$. On the other hand, by assumption $\lambda - {}^a \hspace{-1pt} \lambda$ is a nonzero sum of positive roots, so that its pairing with $2\rho^\vee$ cannot vanish. This provides the desired contradiction.
\end{proof}

\subsection{Standard \texorpdfstring{$G$}{G}-modules}
\label{ss:standard}

Let $\lambda \in \bXp$.  We will work in the setting of \S\S\ref{ss:component}--\ref{ss:twisted}, including, in particular, fixing a $G^\circ$-module $L(\lambda)$, and notation such as $\iota$, $\gamma$, $\theta$, and $\alpha$. We also fix a representative $\Delta(\lambda)$ for the Weyl module surjecting to $L(\lambda)$, and a surjection $\pi^\lambda : \Delta(\lambda) \to L(\lambda)$.

Since $\End_{G^\circ}(\Delta(\lambda))=\bk \cdot \id$, from~\eqref{eqn:twist-standard} we see that
for each $a \in A^\lambda$, there exists a unique isomorphism $\theta^\Delta_a: \Delta(\lambda) \simto {}^{\iota(a)} \hspace{-1pt} \Delta(\lambda)$ such that the following diagram commutes:
\[
\begin{tikzcd}
\Delta(\lambda) \ar[r, "\theta^\Delta_a"] \ar[d, two heads] &
{}^{\iota(a)} \hspace{-1pt} \Delta(\lambda) \ar[d, two heads] \\
L(\lambda) \ar[r, "\theta_a"] & {}^{\iota(a)} \hspace{-1pt} L(\lambda).
\end{tikzcd}
\]
Moreover, this uniqueness implies that
for any $a, b \in A^\lambda$, if we define $\phi^\Delta_{a,b} : \Delta(\lambda) \to \Delta(\lambda)$ as the action of $\gamma(a,b)$, then we have
\begin{equation}
\label{eqn:delta-cocyle}
\phi^\Delta_{a,b}\theta^\Delta_b \theta^\Delta_a = \alpha(a,b) \theta^\Delta_{ab}.
\end{equation}

\begin{rmk}
\label{rmk:Alambda-standards}
These considerations show that the subgroup $A^\lambda \subset A$ can be equivalently defined as consisting of the elements $a \in A$ such that  ${}^{\iota(a)} \hspace{-1pt} \Delta(\lambda) \cong \Delta(\lambda)$. The twisted group algebra $\mathscr{A}^\lambda$ can also be defined in terms of a choice of isomorphisms $(\theta^\Delta_a : a \in A^\lambda)$ instead of isomorphisms $(\theta_a : a \in A^\lambda)$.
\end{rmk}

\begin{lem}
\label{lem:Delta-disconn}
Let $E$ be a finite-dimensional left $\sA^\lambda$-module. 
The following rule defines the structure of a $G^\lambda$-module on the vector space $E \otimes \Delta(\lambda)$:
\[
\iota(a)g \cdot (u \otimes v) = \rho_au \otimes (\theta_a^\Delta)^{-1}(gv)
\qquad\text{for any $a \in A^\lambda$ and $g \in G^\circ$.}
\]
If $E$ is simple, this $G^\lambda$-module has $E \otimes L(\lambda)$ as its unique irreducible quotient.  Moreover, all the $G^\circ$-composition factors of the kernel of the quotient map $E \otimes \Delta(\lambda) \to E \otimes L(\lambda)$ are of the form $L(\mu)$ with $\mu < \lambda$.
\end{lem}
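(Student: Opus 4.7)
The argument splits into three steps. First I would verify that the displayed formula defines a $G^\lambda$-action on $E \otimes \Delta(\lambda)$. This is a direct transcription of the proof of Lemma~\ref{lem:L-module}: the only properties of the $\theta_a$ used there are that each $\theta_a$ is a $G^\circ$-module isomorphism to the corresponding twist, together with the cocycle relation expressing $\theta_{ab}$ in terms of $\theta_a$, $\theta_b$, and $\phi_{a,b}$. Both properties have direct analogues for $\theta^\Delta_a$, with the cocycle relation provided by~\eqref{eqn:delta-cocyle} (where the role of $\phi_{a,b}$ is played by $\phi^\Delta_{a,b}$, the action of $\gamma(a,b)$ on $\Delta(\lambda)$).

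Second, I would construct the surjection onto $E \otimes L(\lambda)$ and analyze its kernel. The commutative square above~\eqref{eqn:delta-cocyle} says precisely that $\pi^\lambda \colon \Delta(\lambda) \twoheadrightarrow L(\lambda)$ intertwines $\theta^\Delta_a$ with $\theta_a$; combined with the $G^\circ$-equivariance of $\pi^\lambda$, this shows that $\id_E \otimes \pi^\lambda$ is a $G^\lambda$-equivariant surjection $E \otimes \Delta(\lambda) \twoheadrightarrow E \otimes L(\lambda)$. Its kernel is $E \otimes K_0$, where $K_0 := \ker(\pi^\lambda)$. As a $G^\circ$-module, $E \otimes K_0$ is a direct sum of $\dim(E)$ copies of $K_0$, whose composition factors are of the form $L(\mu)$ with $\mu < \lambda$ by the classical theory of Weyl modules, giving the composition factor claim.

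Third, and crucially, I would prove uniqueness of the simple quotient. Let $Q$ be any simple $G^\lambda$-quotient of $E \otimes \Delta(\lambda)$. Then $\Hom_{G^\circ}(E \otimes \Delta(\lambda), Q) \neq 0$, and since $E \otimes \Delta(\lambda)$ is isomorphic as a $G^\circ$-module to $\dim(E)$ copies of $\Delta(\lambda)$, this forces $\Hom_{G^\circ}(\Delta(\lambda), Q) \neq 0$; hence $L(\lambda)$ is a $G^\circ$-composition factor of $Q$. Applying Lemma~\ref{lem:restrict-semis} to the $G^\lambda$-module $Q$, and observing that the $A^\lambda$-orbit of $L(\lambda)$ in $\Irr(G^\circ)$ is $\{L(\lambda)\}$ by the very definition of $A^\lambda$, we conclude that the restriction of $Q$ to $G^\circ$ is a direct sum of copies of $L(\lambda)$. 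By Proposition~\ref{prop:Glambda}, $Q \cong F \otimes L(\lambda)$ for some simple $\sA^\lambda$-module $F$. A short computation, writing any $G^\circ$-map $E \otimes \Delta(\lambda) \to F \otimes L(\lambda)$ in the form $f \otimes \pi^\lambda$ for some linear $f : E \to F$ and then translating $G^\lambda$-equivariance into $\sA^\lambda$-linearity of $f$, yields
\[
\Hom_{G^\lambda}(E \otimes \Delta(\lambda), F \otimes L(\lambda)) \cong \Hom_{\sA^\lambda}(E, F).
\]
By Schur's lemma this is nonzero only when $E \cong F$, so $Q \cong E \otimes L(\lambda)$, as desired.

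The main obstacle is this last $\Hom$-space identification, which requires carefully tracking how the $\sA^\lambda$-module structures on $E$ and $F$ interact with the twists $\theta^\Delta_a$ and $\theta_a$ via the compatibility square. The other two steps are routine extensions of constructions already performed in \S\ref{ss:twisted} and~\S\ref{ss:lambda-E}.
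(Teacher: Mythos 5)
Your first two steps are identical to the paper's. The uniqueness argument is correct but takes a different route: you first invoke Lemma~\ref{lem:restrict-semis} and Proposition~\ref{prop:Glambda} to show that any simple $G^\lambda$-quotient must have the form $F \otimes L(\lambda)$, and then compute $\Hom_{G^\lambda}(E \otimes \Delta(\lambda), F \otimes L(\lambda)) \cong \Hom_{\sA^\lambda}(E,F)$ to conclude $F \cong E$ by Schur's lemma. The paper instead argues more directly at the level of the head: the $G^\lambda$-head $M$ of $E \otimes \Delta(\lambda)$ is semisimple over $G^\circ$ (again by Lemma~\ref{lem:restrict-semis}), hence is a quotient of the maximal $G^\circ$-semisimple quotient, namely $E \otimes L(\lambda)$; since $E \otimes L(\lambda)$ is $G^\lambda$-simple and appears as a quotient of $E \otimes \Delta(\lambda)$, it must coincide with $M$. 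The paper's route is shorter and avoids the explicit $\Hom$-space computation; yours has the small bonus of showing along the way that $\Hom_{G^\lambda}(E \otimes \Delta(\lambda), E \otimes L(\lambda))$ is one-dimensional, which is useful for verifying axiom \eqref{it:Delta-nabla-proj-inj} of the highest-weight structure later on. Both are sound.
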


\begin{proof}
We begin by noting that thanks to~\eqref{eqn:delta-cocyle},
the calculation from Lemma~\ref{lem:L-module} can be repeated to show that the formula above does, indeed, define the structure of a $G^\lambda$-module on $E \otimes \Delta(\lambda)$.  Moreover, the quotient map $\pi^\lambda: \Delta(\lambda) \to L(\lambda)$ induces a surjective map of $G^\lambda$-modules
\[
\pi^\lambda_E := \id_E \otimes \pi: E \otimes \Delta(\lambda) \to E \otimes L(\lambda).
\]

Now, assume that $E$ is simple.
If we forget the $G^\lambda$-module structure and regard $E \otimes \Delta(\lambda)$ as just a $G^\circ$-module, then it is clear that its unique maximal semisimple quotient can be identified with $E \otimes L(\lambda)$, and that the highest weights of the kernel of $\pi^\lambda_E$ are${}< \lambda$.
Let $M$ be the head of $E \otimes \Delta(\lambda)$ as a $G^\lambda$-module.  Since $M$ must remain semisimple as a $G^\circ$-module (by Lemma~\ref{lem:restrict-semis}), it cannot be larger than $E \otimes L(\lambda)$.  In other words, $E \otimes L(\lambda)$ is the unique simple quotient of $E \otimes \Delta(\lambda)$.
\end{proof}

\begin{prop}
\label{prop:standard-disconn}
Let $E$ be a simple $\sA^\lambda$-module. The $G$-module
\[
\Delta(\lambda,E) :=
\Ind_{G^\lambda}^G(E \otimes \Delta(\lambda))
\]
admits $L(\lambda,E)$ as its unique irreducible quotient. Moreover, all the composition factors of the kernel of the quotient map $\Delta(\lambda,E) \to L(\lambda,E)$ are of the form $L(\mu,E')$ with $[\mu,E'] < [\lambda,E]$.
\end{prop}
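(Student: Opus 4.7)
My plan is as follows. First, apply the exact induction functor $\Ind_{G^\lambda}^G$ (exactness was recorded in the proof of Lemma~\ref{lem:simple-Ind}) to the surjection $E \otimes \Delta(\lambda) \twoheadrightarrow E \otimes L(\lambda)$ of Lemma~\ref{lem:Delta-disconn}. This immediately produces a surjection $\Delta(\lambda,E) \twoheadrightarrow L(\lambda,E)$, exhibiting $L(\lambda,E)$ as a simple quotient.

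Next, I analyze $\Delta(\lambda,E)|_{G^\circ}$. Combining~\eqref{eqn:induce-restrict} applied to the inclusion $G^\lambda \hookrightarrow G$ with~\eqref{eqn:twist-standard} gives
\[
\Delta(\lambda,E)|_{G^\circ} \cong \bigoplus_{a} \Delta({}^a \hspace{-1pt} \lambda)^{\oplus \dim E},
\]
summed over representatives of $A/A^\lambda$. The key combinatorial point is that the finite $A$-action on $\bX$ preserves $\leq$, so iterating shows that ${}^a \hspace{-1pt} \lambda \geq \lambda$ forces $a \in A^\lambda$. Hence $L(\lambda)$ appears as a $G^\circ$-composition factor of $\Delta(\lambda,E)$ only through the trivial-coset summand $\Delta(\lambda)^{\oplus \dim E}$, and with total multiplicity exactly $\dim E$ (since $L(\lambda)$ is the head of $\Delta(\lambda)$ and does not appear lower in its composition series). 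The same analysis applied to $L(\nu,F)$ yields $L(\nu,F)|_{G^\circ} \cong \bigoplus_a L({}^a \hspace{-1pt} \nu)^{\oplus \dim F}$, from which $L(\lambda)$ appears in $L(\nu,F)|_{G^\circ}$ iff $\nu \in A \cdot \lambda$.

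To pin down the head and bound the composition factors, I compute $\Hom_G(\Delta(\lambda,E), L(\nu,F))$ via Frobenius reciprocity as $\Hom_{G^\lambda}(E \otimes \Delta(\lambda), L(\nu,F)|_{G^\lambda})$. Any such map, restricted to $G^\circ$, must factor through the $L(\lambda)$-isotypic part of the target (since $\Delta(\lambda)$ has unique simple $G^\circ$-quotient $L(\lambda)$); this isotypic part vanishes unless $\nu \in A\cdot \lambda$, in which case (taking $\nu = \lambda$) it coincides with the trivial-coset piece $F \otimes L(\lambda) \subset L(\lambda,F)|_{G^\lambda}$, which is $G^\lambda$-stable by~\eqref{eqn:stab-subgroup}. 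Applying Lemma~\ref{lem:Delta-disconn} once more, the map factors through $E \otimes L(\lambda)$, and Proposition~\ref{prop:Glambda} identifies $\Hom_{G^\lambda}(E \otimes L(\lambda), F \otimes L(\lambda))$ with $\Hom_{\sA^\lambda}(E,F)$, which by Schur is $\bk$ if $E \cong F$ and $0$ otherwise. Hence $L(\lambda,E)$ is the unique simple quotient, appearing with multiplicity one in the head. For the kernel, any composition factor $L(\nu,F)$ of $\Delta(\lambda,E)$ must contribute $G^\circ$-composition factors among those of $\Delta(\lambda,E)|_{G^\circ}$, so some element of $A\cdot\nu$ lies $\leq$ some element of $A\cdot\lambda$, giving $[\nu,F] \leq [\lambda,E]$; in the equality case (WLOG $\nu = \lambda$) the multiplicity identity $\sum_F m_F \dim F = \dim E$ matching the $L(\lambda)$-counts on both sides, together with $m_E \geq 1$, forces $m_E = 1$ and $m_F = 0$ for all $F \not\cong E$. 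The main technical point to be careful about is the identification of the $L(\lambda)$-isotypic piece of $L(\lambda,F)|_{G^\circ}$ with the $G^\lambda$-stable subspace $F \otimes L(\lambda)$ coming from the trivial coset, so that the Frobenius reciprocity calculation reduces cleanly to $\Hom_{\sA^\lambda}(E,F)$.
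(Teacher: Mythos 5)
Your proof is correct, and the overall strategy matches the paper's in its essentials: apply exact induction to the surjection from Lemma~\ref{lem:Delta-disconn}, and analyze the restriction to $G^\circ$. The difference is in how you pin down the head and the kernel. The paper argues more directly: since the $G^\circ$-head of $\Delta(\lambda,E)$ is $\bigoplus_i E \otimes {}^{g_i}L(\lambda) = L(\lambda,E)|_{G^\circ}$, and any semisimple $G$-quotient is automatically semisimple over $G^\circ$ (Lemma~\ref{lem:restrict-semis}), the $G$-head can be at most $L(\lambda,E)$; and for the kernel bound, it notes that the kernel, restricted to $G^\circ$, is the direct sum of the kernels of ${}^{g_i}\Delta(\lambda) \to {}^{g_i}L(\lambda)$, whose $G^\circ$-composition factors are already \emph{strictly} below the twists of $\lambda$, so the desired strict inequality $[\mu,E'] < [\lambda,E]$ drops out immediately. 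You instead compute $\Hom_G(\Delta(\lambda,E), L(\nu,F))$ explicitly via Frobenius reciprocity, identify the $L(\lambda)$-isotypic piece with the trivial-coset summand $F \otimes L(\lambda)$, and reduce to $\Hom_{\sA^\lambda}(E,F)$; then you handle the possible $\nu \in A\lambda$ composition factors with a multiplicity count $\sum_F m_F \dim F = \dim E$. This is more work, but it does give the sharper statement that $L(\lambda,E)$ occurs in $\Delta(\lambda,E)$ with multiplicity exactly one, which the paper's proof obtains only implicitly. Both approaches ultimately rest on the same combinatorial observation (essentially the antisymmetry of the order on orbits, i.e., ${}^a\lambda \geq \lambda$ forces $a \in A^\lambda$), which you rederive by iteration where the paper uses the $2\rho^\vee$-pairing.
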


\begin{proof}
The surjection $E \otimes \Delta(\lambda) \to E \otimes L(\lambda)$ from Lemma~\ref{lem:Delta-disconn} induces a surjection $\Delta(\lambda,E) \to L(\lambda,E)$ since the functor $\Ind_{G^\lambda}^G$ is exact (see the proof of Lemma~\ref{lem:simple-Ind}). If $g_1, \cdots, g_r$ are representatives of the cosets in $G/G^\lambda$, then as $G^\circ$-modules we have
\begin{equation}
\label{eqn:isom-Delta-G0}
\Delta(\lambda,E) \cong \bigoplus_{i=1}^r E \otimes {}^{g_i} \hspace{-1pt} \Delta(\lambda), \quad L(\lambda,E) \cong \bigoplus_{i=1}^r E \otimes {}^{g_i} \hspace{-1pt} L(\lambda).
\end{equation}
Therefore, as in the proof of Lemma~\ref{lem:simple-Ind}, $L(\lambda,E)$ is the head of $\Delta(\lambda,E)$ as a $G^\circ$-module, hence also as a $G$-module.

If $L(\mu,E')$ is a $G$-composition factor of the kernel of the surjection $\Delta(\lambda,E) \to L(\lambda,E)$, then some twist of $L(\mu)$ must be a $G^\circ$-composition factor of the surjection $ {}^{g_i} \hspace{-1pt} \Delta(\lambda) \to  {}^{g_i} \hspace{-1pt} L(\lambda)$ for some $i$. Therefore $\mu$ is smaller than some twist of $\lambda$, and we deduce that $[\mu,E'] < [\lambda,E]$.
\end{proof}

\subsection{\texorpdfstring{$\Ext^1$}{Ext1}-vanishing}
\label{ss:vanishing}

The same 
proof as for Lemma~\ref{lem:conjugation-isom}
shows that, up to isomorphism, $\Delta(\lambda,E)$ only depends on the orbit $[\lambda,E]$. The following lemma shows that this module is a ``partial projective cover'' of $L(\lambda,E)$ (under the assumption that $p \nmid |A|$).

\begin{lem}
\label{lem:Ext1-Delta}
Assume that $p \nmid |A|$.
For any two pairs $(\lambda,E)$ and $(\mu,E')$, we have
\[
\Ext^1_G \bigl( \Delta(\lambda,E), L(\mu,E') \bigr) \neq 0 \quad \Rightarrow \quad [\mu, E'] > [\lambda,E].
\]
\end{lem}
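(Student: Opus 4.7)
The plan is to pull the question back to the well-known $\Ext^1$-vanishing in the highest weight structure on $\Rep(G^\circ)$ by restricting to $G^\circ$; the hypothesis $p \nmid |A|$ ensures that this restriction is homologically faithful.

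First, I would invoke the $A$-fixed-point description of $G$-Ext groups established in the proof of Lemma~\ref{lem:induced-semisimple}: since $(-)^A$ is exact (because $p \nmid |A|$) and restriction sends injective $G$-modules to injective $G^\circ$-modules, there is an isomorphism
\[
\Ext^1_G(X,Y) \simto \bigl(\Ext^1_{G^\circ}(X,Y)\bigr)^A
\]
for any $G$-modules $X,Y$. Applied to $X = \Delta(\lambda,E)$ and $Y = L(\mu,E')$, the hypothesis forces $\Ext^1_{G^\circ}(\Delta(\lambda,E), L(\mu,E')) \neq 0$.

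Next, I would decompose both modules as $G^\circ$-modules. Choosing coset representatives $g_1,\dots,g_r$ for $G/G^\lambda$ and $h_1,\dots,h_s$ for $G/G^\mu$, the explicit formula~\eqref{eqn:induce-restrict} together with~\eqref{eqn:twist-standard} (and its standard-module analogue, cf.~\S\ref{ss:standard}) gives
\[
\Delta(\lambda,E)|_{G^\circ} \cong \bigoplus_{i=1}^r \Delta({}^{\varpi(g_i)}\lambda)^{\oplus \dim E}, \qquad L(\mu,E')|_{G^\circ} \cong \bigoplus_{j=1}^s L({}^{\varpi(h_j)}\mu)^{\oplus \dim E'}.
\]
Hence $\Ext^1_{G^\circ}(\Delta(\lambda,E), L(\mu,E'))$ splits as a direct sum of terms of the shape $\Ext^1_{G^\circ}(\Delta({}^{\varpi(g_i)}\lambda), L({}^{\varpi(h_j)}\mu))$, and so at least one such term is nonzero.

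Finally, I would appeal to the classical $\Ext^1$-vanishing for the highest weight category $\Rep(G^\circ)$: namely, $\Ext^1_{G^\circ}(\Delta(\nu), L(\nu')) \neq 0$ implies $\nu' > \nu$ in the dominance order on $\bX$ (this follows from the long exact sequence applied to $0 \to L(\nu') \to \nabla(\nu') \to \nabla(\nu')/L(\nu') \to 0$ together with $\Ext^1_{G^\circ}(\Delta(\nu), \nabla(\nu')) = 0$ and the fact that $\nabla(\nu')/L(\nu')$ has composition factors of highest weight strictly below $\nu'$). This yields indices $i, j$ with ${}^{\varpi(h_j)}\mu > {}^{\varpi(g_i)}\lambda$. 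Since the $A$-action on $\bX$ preserves $\Phi^+$, it preserves the dominance order; applying ${}^{\varpi(h_j)^{-1}}$ to the inequality produces ${}^a \lambda < \mu$ for $a := \varpi(h_j^{-1} g_i) \in A$, which is precisely the relation $[\mu,E'] > [\lambda,E]$ as defined in~\eqref{eqn:def-order}. I do not anticipate any serious obstacle: the argument is essentially a bookkeeping exercise transporting the known $G^\circ$-result through the direct-sum decompositions of the induced modules, with $p \nmid |A|$ cleanly handling the passage to $A$-invariants.
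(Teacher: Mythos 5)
Your proposal is correct and follows essentially the same route as the paper's proof: both pass to $G^\circ$-$\Ext$ via the $A$-invariants isomorphism from the proof of Lemma~\ref{lem:induced-semisimple}, decompose $\Delta(\lambda,E)$ and $L(\mu,E')$ as $G^\circ$-modules using~\eqref{eqn:isom-Delta-G0} (equivalently,~\eqref{eqn:induce-restrict} with~\eqref{eqn:twist-standard}), and invoke the classical $\Ext^1$-vanishing for the highest weight structure on $\Rep(G^\circ)$. The only difference is presentational: you spell out the long-exact-sequence argument behind $\Ext^1_{G^\circ}(\Delta(\nu),L(\nu'))\neq 0 \Rightarrow \nu' > \nu$, which the paper treats as known.
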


\begin{proof}
As in the proof of Lemma~\ref{lem:induced-semisimple}, we have a canonical isomorphism
\[
\Ext^1_G \bigl( \Delta(\lambda,E), L(\mu,E') \bigr) \cong
\Bigl( \Ext^1_{G^\circ} \bigl( \Delta(\lambda,E), L(\mu,E') \bigr) \Bigr)^A.
\]
If we assume that $\Ext^1_G \bigl(\Delta(\lambda,E), L(\mu,E') \bigr) \neq 0$, then this isomorphism shows that we must also have $\Ext^1_{G^\circ} \bigl( \Delta(\lambda,E), L(\mu,E') \bigr) \neq 0$. Using~\eqref{eqn:isom-Delta-G0}, we deduce that for some $g,h \in G$ we have
\[
\Ext^1_{G^\circ}({}^g \hspace{-1pt} \Delta(\lambda), {}^h \hspace{-1pt} L(\mu)) \neq 0.
\]
This implies that ${}^{\varpi(h)} \hspace{-1pt} \mu > {}^{\varpi(g)} \hspace{-1pt} \lambda$, hence that $[\mu, E'] > [\lambda,E]$.
\end{proof}

\subsection{Costandard \texorpdfstring{$G$}{G}-modules}
\label{ss:costandard}

Fix again $\lambda \in \bXp$ and a simple $\sA^\lambda$-module $E$. Then after fixing a costandard module $\nabla(\lambda)$ with socle $L(\lambda)$ and an embedding $L(\lambda) \hookrightarrow \nabla(\lambda)$, as in~\S\ref{ss:standard} the isomorphisms $\theta_a$ can be ``lifted'' to isomorphisms $\theta_a^\nabla : \nabla(\lambda) \simto {}^{\iota(a)} \hspace{-1pt} \nabla(\lambda)$, which satisfy the appropriate analogue of~\eqref{eqn:delta-cocyle}. Using these isomorphisms one can define a $G^\lambda$-module structure on $E \otimes \nabla(\lambda)$ by the same procedure as in Lemma~\ref{lem:Delta-disconn}. Then the same arguments as for Proposition~\ref{prop:standard-disconn} show that $\nabla(\lambda,E):=\Ind_{G^\lambda}^G(E \otimes \nabla(\lambda))$ admits $L(\lambda,E)$ as its unique simple submodule, and that all the composition factors of the injection $L(\lambda,E) \hookrightarrow \nabla(\lambda,E)$ are of the form $L(\mu,E')$ with $[\mu,E'] < [\lambda,E]$. Moreover, as in Lemma~\ref{lem:Ext1-Delta}, if $p \nmid |A|$ we have
\[
\Ext^1_G \bigl( L(\mu,E'), \nabla(\lambda,E) \bigr) \neq 0 \quad \Rightarrow \quad [\mu, E'] > [\lambda,E].
\]

\begin{lem}
\label{lem:Ext-vanishing}
Assume that $p \nmid |A|$, and
let $(\lambda,E)$ and $(\mu, E')$ be pairs as above. Then for any $i>0$ we have
\[
\Ext^i_G(\Delta(\lambda,E), \nabla(\mu,E'))=0.
\]
Moreover
\[
\Hom_G(\Delta(\lambda,E), \nabla(\mu,E'))=0
\]
unless $[\lambda,E]=[\mu,E']$, in which case this space is $1$-dimensional.
\end{lem}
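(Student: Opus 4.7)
The plan is to combine two approaches: the isomorphism $\Ext^i_G(X,Y) \cong (\Ext^i_{G^\circ}(X,Y))^A$ established in the proof of Lemma~\ref{lem:induced-semisimple} handles the vanishing of $\Ext^i$ for $i > 0$ as well as the vanishing of $\Hom$ when $[\lambda,E] \neq [\mu,E']$, while Frobenius reciprocity handles the dimension count when $[\lambda,E] = [\mu,E']$.

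For the first part, using~\eqref{eqn:isom-Delta-G0} and its $\nabla$-analogue, together with the fact that $E$ and $E'$ are trivial as $G^\circ$-modules, I obtain
\[
\Ext^i_{G^\circ} \bigl( \Delta(\lambda,E), \nabla(\mu,E') \bigr) \cong \bigoplus_{s,t} \Hom_\bk(E, E') \otimes \Ext^i_{G^\circ} \bigl( \Delta({}^{\varpi(g_s)} \lambda), \nabla({}^{\varpi(h_t)} \mu) \bigr),
\]
where $\{g_s\}$ and $\{h_t\}$ are coset representatives of $G/G^\lambda$ and $G/G^\mu$. By the classical $\Ext$-orthogonality for connected reductive groups (see~\cite[II.4.13]{jantzen}), each summand vanishes for $i > 0$, and for $i = 0$ equals $\Hom_\bk(E,E')$ precisely when ${}^{\varpi(g_s)} \lambda = {}^{\varpi(h_t)} \mu$. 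Taking $A$-invariants therefore yields $\Ext^i_G = 0$ for $i > 0$, and $\Hom_G = 0$ unless $\lambda$ and $\mu$ lie in the same $A$-orbit.

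When $[\lambda,E] = [\mu,E']$, I first reduce to $\lambda = \mu$ and $E = E'$ via the $\Delta$- and $\nabla$-analogues of Lemma~\ref{lem:conjugation-isom} (mentioned just before Lemma~\ref{lem:Ext1-Delta}). Then Frobenius reciprocity applied to $\nabla(\lambda,E) = \Ind_{G^\lambda}^G(E \otimes \nabla(\lambda))$ gives
\[
\Hom_G \bigl( \Delta(\lambda,E), \nabla(\lambda,E) \bigr) \cong \Hom_{G^\lambda} \bigl( \Delta(\lambda,E)|_{G^\lambda}, E \otimes \nabla(\lambda) \bigr).
\]
Arguing as in the proof of Lemma~\ref{lem:ind-tensor} but now for the pair $G^\lambda \subset G$, the subspace of functions in $\Ind_{G^\lambda}^G(E \otimes \Delta(\lambda))$ supported on $G^\lambda$ is a $G^\lambda$-direct summand of $\Delta(\lambda,E)|_{G^\lambda}$ canonically isomorphic to $E \otimes \Delta(\lambda)$. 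The complementary summand is, as a $G^\circ$-module, a direct sum of modules $E \otimes \Delta({}^a \lambda)$ with $a \in A \setminus A^\lambda$; since $\lambda$ and ${}^a \lambda$ are incomparable for such $a$ (by the $\langle -, 2\rho^\vee \rangle$ argument used to establish that $<$ is a partial order), none of these has $L(\lambda)$ as a $G^\circ$-composition factor. Consequently no nonzero $G^\lambda$-morphism from the complement to $E \otimes \nabla(\lambda)$ (whose $G^\circ$-socle is $E \otimes L(\lambda)$) can exist, and the Hom reduces further to $\Hom_{G^\lambda}(E \otimes \Delta(\lambda), E \otimes \nabla(\lambda))$.

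Finally, any such morphism factors through the unique simple quotient $E \otimes L(\lambda)$ of $E \otimes \Delta(\lambda)$ (Lemma~\ref{lem:Delta-disconn}) and lies inside the unique simple submodule $E \otimes L(\lambda)$ of $E \otimes \nabla(\lambda)$ (the costandard analogue from~\S\ref{ss:costandard}), so it is determined by a $G^\lambda$-endomorphism of $E \otimes L(\lambda)$; by Proposition~\ref{prop:Glambda} and Schur's lemma these endomorphisms form $\End_{\sA^\lambda}(E) \cong \bk$. Existence of a nonzero such morphism is witnessed by tensoring the canonical $G^\circ$-map $\Delta(\lambda) \twoheadrightarrow L(\lambda) \hookrightarrow \nabla(\lambda)$ with $\mathrm{id}_E$, with $G^\lambda$-equivariance following from the compatibility of the lifts $\theta_a^\Delta$ and $\theta_a^\nabla$ with $\theta_a$ via the quotient $\pi^\lambda$ and the fixed inclusion $L(\lambda) \hookrightarrow \nabla(\lambda)$. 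The main obstacle will be the careful verification that this coset-of-identity piece really is a $G^\lambda$-direct summand realizing $E \otimes \Delta(\lambda)$ with the expected $G^\lambda$-structure.
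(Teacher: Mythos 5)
Your treatment of the $\Ext^i$-vanishing for $i>0$ coincides with the paper's. For the $\Hom$ computation you take a genuinely different and more elaborate route. The paper argues directly at the level of $G$-modules: a nonzero map $\Delta(\lambda,E)\to\nabla(\mu,E')$ forces $L(\lambda,E)$ to occur in $\nabla(\mu,E')$ and $L(\mu,E')$ to occur in $\Delta(\lambda,E)$, whence $[\lambda,E]\le[\mu,E']\le[\lambda,E]$ by Proposition~\ref{prop:standard-disconn} and its costandard counterpart; and when $[\lambda,E]=[\mu,E']$ any nonzero map must have image exactly $L(\lambda,E)$, hence is a scalar multiple of the composite $\Delta(\lambda,E)\twoheadrightarrow L(\lambda,E)\hookrightarrow\nabla(\lambda,E)$. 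You instead pass through Frobenius reciprocity and a Mackey-type identification of the identity-coset direct summand of $\Delta(\lambda,E)|_{G^\lambda}$, which is correct (the double cosets $G^\lambda g G^\lambda$ are open and closed, so the decomposition by support is $G^\lambda$-stable, and evaluation at $1$ identifies the identity piece with $E\otimes\Delta(\lambda)$), but longer. What your route buys is that it replaces the use of the partial order on the classes $[\lambda,E]$ by the observation that $\lambda$ and ${}^a\lambda$ are $\le$-incomparable for $a\notin A^\lambda$.

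There is, however, a gap in the write-up. Your $G^\circ$-level computation plus $A$-invariance gives $\Hom_G(\Delta(\lambda,E),\nabla(\mu,E'))=0$ only when $\lambda$ and $\mu$ lie in \emph{different} $A$-orbits, and your Frobenius/Mackey argument is then invoked only under the hypothesis $[\lambda,E]=[\mu,E']$. The case where $\lambda$ and $\mu$ lie in the \emph{same} $A$-orbit but $[\lambda,E]\ne[\mu,E']$ (i.e.\ after reducing to $\lambda=\mu$, the case $E\not\cong E'$) is never addressed: there $\Hom_{G^\circ}$ is nonzero, so the $A$-invariants do not vanish for free. Fortunately the same Frobenius/Mackey argument covers it: after reducing to $\lambda=\mu$ it yields $\Hom_G(\Delta(\lambda,E),\nabla(\lambda,E'))\cong\Hom_{G^\lambda}(E\otimes\Delta(\lambda),E'\otimes\nabla(\lambda))$, the head/socle factoring then lands you in $\Hom_{G^\lambda}(E\otimes L(\lambda),E'\otimes L(\lambda))$, and this vanishes for $E\not\cong E'$ by Proposition~\ref{prop:Glambda} and Schur's lemma. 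You should state this case explicitly so the argument is complete.
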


\begin{proof}
As in the proof of Lemma~\ref{lem:induced-semisimple}, for any $i>0$ we have
\[
\Ext_G^{i}(\Delta(\lambda,E), \nabla(\mu,E')) \cong \\
\bigl( \Ext_{G^\circ}^i(\Delta(\lambda,E), \nabla(\mu,E')) \bigr)^A.
\]
As $G^\circ$-modules $\Delta(\lambda,E)$ is isomorphic to a direct sum of Weyl modules, and $\nabla(\mu,E')$ is isomorphic to a direct sum of induced modules. Hence, the right-hand side vanishes unless $i=0$, which proves the first claim.

For the second claim we remark that if
$\Hom_G(\Delta(\lambda,E), \nabla(\mu,E')) \neq 0$,
then $L(\lambda,E)$ is a composition factor of $\nabla(\mu,E')$, so that $[\lambda,E] \leq [\mu,E']$, and $L(\mu,E')$ is a composition factor of $\Delta(\lambda,E)$, so that $[\mu,E'] \leq [\lambda,E]$. We deduce that $[\mu,E']=[\lambda,E]$. Moreover, in this case any nonzero morphism in this space must be a multiple of the composition
\[
\Delta(\lambda,E) \twoheadrightarrow L(\lambda,E) \hookrightarrow \nabla(\lambda,E),
\]
which concludes the proof.
\end{proof}

\subsection{Highest weight structure}
\label{ss:hw}

Let $\mathscr{C}$ be a finite-length $\bk$-linear abelian category such that $\Hom_{\mathscr{C}}(M,N)$ is finite-dimensional for any $M$, $N$ in $\mathscr{C}$.
Let $\mathscr{S}$ be the set of isomorphism classes of irreducible objects of $\mathscr{C}$. Assume that $\mathscr{S}$ is equipped with a partial order $\leq$, and that for each $s \in \mathscr{S}$ we have a fixed representative of the simple object $L_s$. Assume also we are given, for any $s \in \mathscr{S}$, objects $\Delta_s$ and $\nabla_s$, and morphisms $\Delta_s \to L_s$ and $L_s \to \nabla_s$. For $\mathscr{T} \subset \mathscr{S}$, we denote by $\mathscr{C}_{\mathscr{T}}$ the Serre subcategory of $\mathscr{C}$ generated by the objects $L_t$ for $t \in \mathscr{T}$. We write $\mathscr{C}_{\leq s}$ for $\mathscr{C}_{\{t \in \mathscr{S} \mid t \leq s\}}$, and similarly for $\mathscr{C}_{<s}$. Finally, recall that an \emph{ideal} of $\mathscr{S}$ is a subset $\mathscr{T} \subset \mathscr{S}$ such that if $t \in \mathscr{T}$ and $s \in \mathscr{S}$ are such that $s \leq t$, then $s \in \mathscr{T}$.

Recall that
the category $\mathscr{C}$ (together with the above data) is said to be a \emph{highest weight category} if the following conditions hold:
\begin{enumerate}
\item
\label{it:qh-def-fin}
for any $s \in \mathscr{S}$, the set $\{t \in \mathscr{S} \mid t \leq s\}$ is finite;
\item
\label{it:qh-def-split}
for each $s \in \mathscr{S}$, we have 
$\End_{\mathscr{C}}(L_s) = \bk$;
\item
\label{it:Delta-nabla-proj-inj}
for any $s \in \mathscr{S}$ and any ideal $\mathscr{T} \subset \mathscr{S}$ such that $s \in \mathscr{T}$ is maximal, $\Delta_s \to L_s$ is a projective cover in $\mathscr{C}_{\mathscr{T}}$ and $L_s \to \nabla_s$ is an injective envelope in $\mathscr{C}_{\mathscr{T}}$;
\item
\label{it:qh-def-ker}
the kernel of $\Delta_s \to L_s$ and the cokernel of $L_s \to \nabla_s$ belong to $\mathscr{C}_{<s}$;
\item
\label{it:qh-def-ext2}
we have $\Ext^2_{\mathscr{C}}(\Delta_s, \nabla_t) = 0$ for all $s, t \in \mathscr{S}$.
\end{enumerate}
In this case, the poset $(\mathscr{S}, \leq)$ is called the \emph{weight poset} of $\mathscr{C}$.

See~\cite[\S 7]{riche-hab} for the basic properties of highest weight categories (following Cline--Parshall--Scott and Be{\u\i}linson--Ginzburg--Soergel).

We can finally state the main result of this section.

\begin{thm}
\label{thm:hw-structure}
Assume that $p \nmid |A|$.
The category $\Rep(G)$, equipped with the poset
\[
\left.
\left\{
(\lambda, E) \, \left| \,
\begin{array}{c}
\text{$\lambda \in \bXp$ and $E$ an isom. class} \\
\text{of simple $\sA^\lambda$-modules}
\end{array}
\right\} \right.
\right/ A
\]
(with the order defined in~\eqref{eqn:def-order}) and the objects $\Delta(\lambda,E)$, $L(\lambda,E)$, $\nabla(\lambda,E)$, is a highest weight category.
\end{thm}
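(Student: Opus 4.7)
The plan is to verify the five axioms of a highest weight category one by one, using the constructions and vanishing results developed in Sections~\ref{sec:simples} and~\S\S\ref{ss:standard}--\ref{ss:costandard}. Most of the work has already been done; what remains is essentially assembly.

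First, for axiom~\eqref{it:qh-def-fin} (local finiteness of the poset): for any $\lambda \in \bXp$ the set $\{\mu \in \bXp \mid \mu \leq \lambda\}$ is finite by standard facts about the dominance order on the weight lattice of $G^\circ$, and each stabilizer $A^\lambda$ is finite, so the set $\Irr(\sA^\mu)$ is finite for each $\mu$; moreover the number of $A$-orbits in the set of pairs $(\mu,E')$ with $\mu \leq \lambda$ is bounded by the product of these cardinalities (times $|A|$). For axiom~\eqref{it:qh-def-split}, $L(\lambda,E)$ is simple by Lemma~\ref{lem:Ind-simple} and $\bk$ is algebraically closed, so Schur's lemma gives $\End_G(L(\lambda,E)) = \bk$.

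Axiom~\eqref{it:qh-def-ker} is exactly Proposition~\ref{prop:standard-disconn} together with its costandard analogue stated in~\S\ref{ss:costandard}. Axiom~\eqref{it:qh-def-ext2} is even stronger in our setting: Lemma~\ref{lem:Ext-vanishing} shows that $\Ext^i_G(\Delta(\lambda,E),\nabla(\mu,E'))=0$ for \emph{all} $i>0$, and in particular for $i=2$.

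The nontrivial axiom to verify is~\eqref{it:Delta-nabla-proj-inj}. Fix $s=[\lambda,E]$ and an ideal $\mathscr{T}$ of the poset in which $s$ is maximal. Since all composition factors of $\Delta(\lambda,E)$ other than $L(\lambda,E)$ are of the form $L(\mu,E')$ with $[\mu,E']<[\lambda,E]$ (by Proposition~\ref{prop:standard-disconn}) and $\mathscr{T}$ is an ideal containing $s$, the module $\Delta(\lambda,E)$ lies in $\mathscr{C}_{\mathscr{T}}:=\Rep(G)_{\mathscr{T}}$. To see that $\Delta(\lambda,E)\to L(\lambda,E)$ is a projective cover in $\mathscr{C}_{\mathscr{T}}$, I first check projectivity: for any $t=[\mu,E']\in\mathscr{T}$, Lemma~\ref{lem:Ext1-Delta} gives $\Ext^1_G(\Delta(\lambda,E),L(\mu,E'))=0$, because $t>s$ is impossible by maximality of $s$. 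Since $\Ext^1$ in the Serre subcategory $\mathscr{C}_{\mathscr{T}}$ injects into $\Ext^1$ in $\Rep(G)$, this shows $\Delta(\lambda,E)$ is projective in $\mathscr{C}_{\mathscr{T}}$. The cover property (essential surjection) follows from Proposition~\ref{prop:standard-disconn}: $L(\lambda,E)$ is the \emph{unique} irreducible quotient, so the kernel of $\Delta(\lambda,E)\to L(\lambda,E)$ is the (unique) maximal submodule, i.e.\ the radical, making the map an essential surjection. The injective envelope statement for $L(\lambda,E)\hookrightarrow\nabla(\lambda,E)$ is proved dually, using the costandard analogues discussed in~\S\ref{ss:costandard}.

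The main obstacle I anticipate is purely bookkeeping — namely, making sure that the mild subtlety about $\Ext^1$ in a Serre subcategory versus $\Ext^1$ in the ambient category is handled cleanly (it is, since a short exact sequence with outer terms in $\mathscr{C}_{\mathscr{T}}$ automatically has middle term in $\mathscr{C}_{\mathscr{T}}$), and that the ``unique simple quotient'' conclusion of Proposition~\ref{prop:standard-disconn} is being invoked correctly to deduce the essential-cover condition. No new computation is needed beyond what is already recorded in Lemmas~\ref{lem:Ind-simple}, \ref{lem:Ext1-Delta}, \ref{lem:Ext-vanishing} and Proposition~\ref{prop:standard-disconn}.
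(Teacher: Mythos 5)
Your proposal is correct and takes essentially the same route as the paper, which simply lists Theorem~\ref{thm:disconn-class}, Proposition~\ref{prop:standard-disconn}, Lemma~\ref{lem:Ext1-Delta}, the costandard variants from~\S\ref{ss:costandard}, and Lemma~\ref{lem:Ext-vanishing} as supplying all five axioms; your write-up just makes the bookkeeping explicit, including the (standard and correctly handled) point that $\Ext^1$ in the Serre subcategory $\mathscr{C}_{\mathscr{T}}$ injects into $\Ext^1_G$.
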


\begin{proof}
The desired properties are verified in Theorem~\ref{thm:disconn-class}, Proposition~\ref{prop:standard-disconn} and Lemma~\ref{lem:Ext1-Delta}, their variants for costandard objects (see~\S\ref{ss:costandard}), and Lemma~\ref{lem:Ext-vanishing}.
\end{proof}

\section{Grothendieck groups}
\label{sec:Groth}

Our goal in this section is to prove a generalization of a result of Serre~\cite{serre-Groth} providing a description of the Grothendieck group of 
any split connected reductive group over a
strictly Henselian discrete valuation ring of mixed characteristic. (In~\cite{serre-Groth}, the author considers more general coefficients, but we will restrict to a setting which is sufficient for the application we have in mind; see~\cite{ahr}.)

\subsection{Setting}

We will denote by $\O$ 
a strictly Henselian discrete valuation ring.  We denote its residue field by $\F$, and its fraction field by $\K$.  Recall that $\F$ is separably closed by definition.  We also let $\overline{\F}$ and $\overline{\K}$ be algebraic closures of $\F$ and $\K$, respectively. We will assume that $\K$ has characteristic $0$, and that $\F$ has characteristic $p>0$.

\begin{lem}
\label{lem:reductive-split}
Any reductive group scheme over $\O$ (in the sense of~\cite{sga3.3})
is split.
\end{lem}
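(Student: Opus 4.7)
The plan is to exploit the fact that $\O$, being a complete discrete valuation ring with algebraically closed residue field, is strictly henselian. Two consequences of this will be used: every smooth morphism $X \to \Spec(\O)$ whose special fiber is nonempty admits a section (by Hensel's lemma applied to an $\F$-point), and the \'etale fundamental group of $\Spec(\O)$ is trivial (since every finite \'etale $\O$-algebra is a product of copies of $\O$).

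Let $H$ be a reductive $\O$-group scheme. First I would produce a maximal torus $T \subset H$ defined over $\O$. By a standard result of SGA3 (Expos\'e~XII), the functor sending an $\O$-scheme $S$ to the set of maximal tori of $H_S$ is representable by a smooth affine $\O$-scheme. Its fiber over $\F$ is nonempty, since the reductive $\F$-group $H_\F$ contains a maximal torus ($\F$ being algebraically closed). By the observation above, this smooth $\O$-scheme has an $\O$-point, yielding the desired $T \subset H$.

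Next I would argue that $T$ is automatically split over $\O$. Via Cartier duality, tori over $\Spec(\O)$ correspond to locally constant \'etale sheaves of finitely generated free $\Z$-modules, equivalently to finitely generated free $\Z$-modules equipped with a continuous action of the \'etale fundamental group of $\Spec(\O)$. Since this fundamental group is trivial, every such action is trivial, and hence every torus over $\O$ is split; in particular $T \cong \Gm^n$. Given a split maximal torus in $H$, the splitness of $H$ as a reductive group scheme (in the sense of SGA3) follows from the standard construction of root subgroups relative to $T$.

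No step in this argument is genuinely difficult; the whole proof is an assembly of classical results, the key input being the strict henselianity of $\O$. Alternatively, one may simply quote the statement (SGA3, Expos\'e~XXII) that every reductive group scheme over a strictly henselian local base is split.
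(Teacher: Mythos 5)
Your proof is correct but follows a genuinely different route from the paper's. The paper starts with a split torus $K$ over $\O$ together with a closed embedding of its special fiber into $H_\F$, lifts this embedding to an $\O$-group homomorphism $f : K \to H$ using~\cite[Exp.~XI, Cor.~4.2]{sga3.2} together with~\cite[Th.~18.5.17]{ega4}, checks by a Nakayama argument on the character-graded coordinate rings that $f$ is a closed embedding, and finally invokes the constancy of the reductive rank~\cite[Exp.~XIX, Th.~2.5]{sga3.3} to conclude that $K$ is a maximal torus. You instead invoke the representability of the scheme of maximal tori by a smooth affine $\O$-scheme and Hensel's lemma to obtain a maximal torus over $\O$ directly, then appeal to the triviality of the \'etale fundamental group of $\Spec(\O)$ to split that torus. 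Both arguments hinge on the same structural inputs (strict henselianity of $\O$ via completeness and algebraic closedness of $\F$, plus smoothness to lift from the special fiber), but your version outsources the lifting step to the representability theorem rather than building the lift by hand, making it shorter at the cost of quoting a heavier piece of machinery; the paper's version is more self-contained in that it only uses the deformation-theoretic lifting of torus homomorphisms. Your final shortcut -- citing directly that reductive group schemes over strictly henselian local rings are split -- is also valid and is essentially the cleanest formulation, though one should double-check the precise reference, as the statement in that form is perhaps more readily located in Conrad's exposition of reductive group schemes than in the original SGA3 expos\'es.
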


\begin{proof}\footnote{This proof, which was communicated to us by Torsten Wedhorn, replaces a more complicated proof that appeared in a previous draft of this paper.}
According to~\cite[Exp.~XXII, Corollaire~2.3]{sga3.3}, any reductive group scheme over $\O$ splits after base change along a suitable \'etale extension $\O \to \O'$.  But because $\O$ is strictly Henselian,~\cite[Proposition~18.8.1(c)]{ega4} tells us that $\O \to \O'$ admits a section. It follows that any reductive group scheme over $\O$ is split.
\end{proof}

In this section we will consider an affine $\O$-group scheme $G$, a closed normal subgroup $G^\circ \subset G$, and we will denote by $A$ the factor group of $G$ by $G^\circ$ in the sense of~\cite[\S I.6.1]{jantzen} (i.e.~of~\cite[III, \S 3, no.~3]{dg}).
We will make the following assumptions:
\begin{enumerate}
\item
\label{it:ass-1}
$G^\circ$ is a reductive group scheme over $\O$ (which is automatically split by Lemma~\ref{lem:reductive-split});
\item
\label{it:ass-2}
$A$ is the constant group scheme associated with a finite group $\mathbf{A}$ (in the sense of~\cite[\S I.8.5(a)]{jantzen}), and moreover $p$ does not divide $|\mathbf{A}|$.
\end{enumerate}

These assumptions have the following consequence.

\begin{lem}
 The $\O$-group scheme $G$ is flat and of finite type.
\end{lem}

\begin{proof}
 By~\cite[III, \S 3, Proposition~2.5]{dg} (see also~\cite[\S I.5.7]{jantzen}), the morphism $G \to A$ is flat and of finite type. Since $A$ is clearly flat and of finite type over $\O$, we deduce the same properties for $G$.
\end{proof}

If $\bk$ is one of $\F$, $\overline{\F}$, $\K$ or $\overline{\K}$, we set
\[
G_\bk := \Spec(\bk) \times_{\Spec(\O)} G, \quad G^\circ_\bk := \Spec(\bk) \times_{\Spec(\O)} G^\circ.
\]
Then by~\cite[Equation~I.5.5(4)]{jantzen}, the quotient $G_\bk / G^\circ_\bk$ is the constant $\bk$-group scheme associated with $\mathbf{A}$; in other words
$G_\bk$ is an extension of the constant (hence smooth) $\bk$-group scheme associated with $\mathbf{A}$ by the smooth group scheme $G^\circ_\bk$. In view of~\cite[Proposition~8.1]{milne} it follows that $G_\bk$ is smooth, and then that $G$ itself is smooth (see~\cite[Tag 01V8]{stacks}).

In particular, the groups $G_{\overline{\F}}$ and $G_{\overline{\K}}$ are algebraic groups (over $\overline{\F}$ and $\overline{\K}$) in the usual ``naive'' sense. Since $G^\circ_{\overline{\F}}$ is connected and $\mathbf{A}$ is finite, the latter group identifies with the group of components of $G_{\overline{\F}}$, and $G^\circ_{\overline{\F}}$ with the identity component of $G_{\overline{\F}}$ (which justifies our notation). Similarly, $\mathbf{A}$ also identifies with the group of components of $G_{\overline{\K}}$, and $G^\circ_{\overline{\K}}$ is the identity component of $G_{\overline{\K}}$.

\begin{lem}
\label{lem:surjectivity}
The morphism $G(\O) \to \mathbf{A}$ induced by $\varpi$ is surjective.
\end{lem}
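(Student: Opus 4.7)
The plan is to decompose the map $G(\O) \to \mathbf{A}$ as the composition
\[
G(\O) \to G(\F) \to A(\F) = A(\O) = \mathbf{A},
\]
and argue that each arrow is surjective. The second equality holds because $A$ is the constant group scheme on $\mathbf{A}$ and $\Spec(\O)$ is connected; more generally both $A(\O)$ and $A(\F)$ are canonically $\mathbf{A}$, and the reduction map between them is the identity.

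For the first arrow, I would invoke smoothness of $G$ over $\O$, which was established just before the statement of the lemma (from smoothness of $G_\F$ over $\F$ and standard descent via~\cite[Tag 01V8]{stacks}). Since $\O$ is a complete (hence henselian) local ring with residue field $\F$, the infinitesimal lifting criterion for smoothness (applied inductively to lift an $\F$-point through $\O/\mathfrak{m}^n$ and then passing to the limit by completeness) gives that the reduction map $G(\O) \to G(\F)$ is surjective.

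For the second arrow, $G_\F$ is a smooth affine algebraic group over the algebraically closed field $\F$, with identity component $G^\circ_\F$ and group of components $\mathbf{A}$. Thus $G_\F = \bigsqcup_{a \in \mathbf{A}} C_a$ as a scheme, where each $C_a$ is a smooth $G^\circ_\F$-torsor over $\Spec(\F)$. Such torsors are trivial because $\F$ is algebraically closed and $G^\circ_\F$ is smooth (equivalently, $H^1_{\mathrm{\acute{e}t}}(\Spec \F, G^\circ_\F) = 1$, or, even more concretely, each $C_a$ is a nonempty smooth $\F$-variety and hence has an $\F$-point). This shows the component map $G_\F(\F) \to \mathbf{A}$ hits every class.

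Composing the two surjections yields the lemma. There is no serious obstacle: everything reduces to the already-proved smoothness of $G$ over $\O$ together with two standard facts (Hensel lifting through a smooth morphism over a complete local ring, and the triviality of torsors under smooth groups over algebraically closed fields). The one small point worth flagging is that the hypothesis $p \nmid |\mathbf{A}|$ plays no role here; surjectivity at the level of $\O$-points only requires smoothness of $G$, and the étaleness of $A$ is automatic since $A$ is a constant finite group scheme.
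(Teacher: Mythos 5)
Your proof is correct and takes essentially the same approach as the paper: the paper's argument is the commutative square with $G(\O) \to G(\F)$ surjective by formal smoothness (citing~\cite[Th\'eor\`eme~18.5.17]{ega4}, which is precisely the Hensel-lifting fact you invoke) and $G(\F) \to A(\F) = A(\O)$ surjective because $\F$ is algebraically closed. You simply unfold the same two ingredients as a composition rather than a square, and supply the (correct) elaborations that the paper leaves implicit.
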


\begin{proof}
We consider the commutative diagram
\[
\begin{tikzcd}
G(\O) \ar[r] \ar[d] & A(\O) \ar[d, equal] \\
G(\F) \ar[r] & A(\F)
\end{tikzcd}
\]
where the horizontal maps are induced by $\varpi$ and the vertical ones by the quotient morphism $\O \to \F$. Here since $\O$ and $\F$ are integral domains the two groups in the right-hand column identify with $\mathbf{A}$, and the right-hand vertical arrow is an isomorphism. On the other hand, the left-hand vertical arrow is surjective by~\cite[Th\'eor\`eme~18.5.17]{ega4}.

To finish the proof, it remains to show that $G(\F) \to A(\F)$ is surjective. To do this, we consider the diagram
\[
\begin{tikzcd}
G(\F) \ar[r] \ar[d] & A(\F) \ar[d, equal] \\
G(\overline{\F}) \ar[r] & A(\overline{\F}).
\end{tikzcd}
\]
Here the bottom horizontal arrow is surjective, and the right-hand vertical arrow is again an isomorphism.  All the arrows commute with the Frobenius endomorphism, denoted by $\mathrm{Fr}$.  Since $A$ is a constant group scheme, its Frobenius endomorphism is the identity map.  Since $\overline{\F}$ is a purely inseparable extension of $\F$, for any $g \in G(\overline{\F})$, there exists an integer $n \ge 1$ such that $\mathrm{Fr}^n(g)$ lies in the image of $G(\F)$.  The result follows.
\end{proof}

Thanks to Lemma~\ref{lem:surjectivity}, we can (and will) choose a section $\iota : \mathbf{A} \to G(\O)$ of the projection induced by $\varpi$. Of course, we cannot assume that $\iota$ is a group morphism in general; but we will at least assume that $\iota(1)=1$. For simplicity, we will also denote by $\iota : A \to G$ the morphism of $\O$-group schemes defined by $\iota$, i.e.~the $\O$-scheme morphism associated with the algebra morphism $\cO(G) \to \cO(A)=\mathrm{Fun}(\mathbf{A},\O)$ (where $\mathrm{Fun}(\mathbf{A},\O)$ denotes the algebra of functions from $\mathbf{A}$ to $\O$) sending $f$ to the map $a \mapsto \iota(a)(f)$.
Base-changing to $\overline{\F}$ and to $\overline{\K}$, $\iota$ provides sections of the projections of $G_{\overline{\F}}$ and $G_{\overline{\K}}$ onto their respective groups of components.

\begin{lem}
\label{lem:G-product}
The morphism
\[
A \times G^\circ \to G
\]
defined by $(a,g) \mapsto \iota(a) \cdot g$ is an isomorphism of $\O$-schemes.
\end{lem}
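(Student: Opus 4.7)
The plan is to construct an explicit two-sided inverse. Denote the given morphism by $\phi : A \times G^\circ \to G$, $(a,g) \mapsto \iota(a) \cdot g$. I will define a morphism of $\O$-schemes $\psi : G \to A \times G^\circ$ which, on the functor of points, is given by
\[
\psi(g) = \bigl( \varpi(g), \, \iota(\varpi(g))^{-1} \cdot g \bigr).
\]
Scheme-theoretically, $\psi$ is assembled from $\varpi : G \to A$, the section $\iota : A \to G$, inversion, and multiplication in $G$; concretely, its second component is the composition
\[
G \xrightarrow{(\iota \circ \varpi,\, \id_G)} G \times G \xrightarrow{\mathrm{inv} \times \id} G \times G \xrightarrow{\mathrm{mult}} G.
\]

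The first step is to verify that this second component factors through the closed subscheme $G^\circ \hookrightarrow G$. Since $G^\circ$ is the kernel of $\varpi$ (i.e.\ the fiber $\varpi^{-1}(1)$), by the universal property this reduces to checking that $\varpi$ applied to $\iota(\varpi(g))^{-1}\cdot g$ is the identity section. Using that $\varpi$ is a group scheme morphism and that $\varpi \circ \iota = \id_A$ by construction of $\iota$, one computes on $R$-points
\[
\varpi\bigl( \iota(\varpi(g))^{-1} g \bigr) = \varpi(\iota(\varpi(g)))^{-1}\, \varpi(g) = \varpi(g)^{-1}\varpi(g) = 1,
\]
which gives the required factorization.

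The second step is to verify $\phi \circ \psi = \id_G$ and $\psi \circ \phi = \id_{A \times G^\circ}$, each by a direct functor-of-points calculation using only $\varpi \circ \iota = \id_A$ and associativity of multiplication. Both compositions unwind immediately; no further hypothesis is needed. The only genuinely nontrivial point in the argument is the factorization in the previous paragraph, and this is essentially forced by the defining property of the section $\iota$ secured in Lemma~\ref{lem:surjectivity}. Note that we make no use here of $\iota$ being a group morphism (which in general it is not), only that it is a section of $\varpi$ as a morphism of $\O$-schemes.
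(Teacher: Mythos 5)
Your proof is correct, and it takes a genuinely different route from the paper's. The paper's proof works on coordinate rings: it observes that the algebra map $\varphi : \mathcal{O}(G) \to \prod_{a \in \mathbf{A}} \mathcal{O}(G^\circ)$ becomes an isomorphism after applying $\F \otimes_\O (-)$ (which is the classical decomposition of a disconnected algebraic group over the algebraically closed residue field), notes that the target is finitely generated as an $\mathcal{O}(G)$-module, and then invokes a Nakayama-type lifting result (the variant of~\cite[Lemma~1.4.1]{br} for discrete valuation rings) to conclude that $\varphi$ is already an isomorphism over $\O$. You instead build an explicit two-sided inverse on the functor of points, relying only on the scheme-theoretic exactness $G^\circ = \ker(\varpi)$ (which is part of the content of forming the quotient $A = G/G^\circ$ in the sense of Demazure--Gabriel) and the identity $\varpi \circ \iota = \id_A$. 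Your argument is more elementary and self-contained: it sidesteps the citation to~\cite{br} and the base-change-to-$\F$ step entirely, and it makes transparent that the result needs nothing beyond the existence of the section $\iota$. The trade-off is that the paper's approach is more robust in spirit (it is the same pattern they reuse elsewhere for lifting isomorphisms from the special fiber), whereas yours depends on having the quotient structure of $\varpi$ available in clean form, which the paper has in fact set up by citing~\cite[\S I.6.1]{jantzen} and~\cite[III, \S 3]{dg}. Both are complete proofs.
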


\begin{proof}
Consider the algebra morphism $\varphi : \mathcal{O}(G) \to \prod_{a \in \mathbf{A}} \mathcal{O}(G^\circ)$ induced by our morphism; what we have to prove is that $\varphi$ is an isomorphism. From the remarks preceding Lemma~\ref{lem:surjectivity}, we know that the algebra morphisms $\overline{\F} \otimes_\O \varphi$ and $\overline{\K} \otimes_\O \varphi$ are isomorphisms; hence so are the morphisms $\F \otimes_\O \varphi$ and $\K \otimes_\O \varphi$. From the invertibility of $\K \otimes_\O \varphi$ and the fact that $\mathcal{O}(G)$ is flat (hence torsion free) we deduce that $\varphi$ is injective.

Now we denote by $C$ the cokernel of $\varphi$. To prove that $C=0$ it suffices to prove that for any maximal ideal $\mathfrak{m} \subset \mathcal{O}(G)$ the localization $C_{\mathfrak{m}}$ vanishes. Then, since $\prod_{a \in \mathbf{A}} \mathcal{O}(G^\circ)$ is finitely generated as an $\mathcal{O}(G)$-module (because $\mathcal{O}(G^\circ)$ is), so is $C$, so that by Nakayama's lemma it suffices to prove that $C_{\mathfrak{m}} / \mathfrak{m} C_{\mathfrak{m}} = C/\mathfrak{m} C$ vanishes. Now the kernel of the composition $\O \to \mathcal{O}(G)/\mathfrak{m}$ is either $\{0\}$ or the unique maximal ideal $\mathfrak{p}$ in $\O$. In the latter case $C/\mathfrak{m} C$ is a quotient of $C/\mathfrak{p} C=C \otimes_\O \F$, which vanishes since $\F \otimes_\O \varphi$ is an isomorphism. In the former case, the morphism $\O \to \mathcal{O}(G)/\mathfrak{m}$ factors through a morphism $\K \to \mathcal{O}(G)/\mathfrak{m}$, and then $C/\mathfrak{m} C=C \otimes_{\mathcal{O}(G)} \mathcal{O}(G)/\mathfrak{m}$ is a quotient of
\[
C \otimes_\O \mathcal{O}(G)/\mathfrak{m} = (C \otimes_\O \K) \otimes_\K \mathcal{O}(G)/\mathfrak{m},
\]
which vanishes since $\K \otimes_\O \varphi$ is invertible.
\end{proof}

\subsection{Statement}

Let us consider the Grothendieck groups
\[
\mathsf{K}(G), \qquad \mathsf{K}(G_\K), \qquad \mathsf{K}(G_\F)
\]
of the categories of (algebraic) $G$-modules of finite type over $\O$, of finite-dimensional (algebraic) $G_\K$-modules, and of finite-dimensional (algebraic) $G_\F$-modules, respectively. We will also denote by $\mathsf{K}_{\mathrm{pr}}(G)$ the Grothendieck group of the exact category of $G$-modules which are free of finite rank over $\O$. Following~\cite{serre-Groth} we consider the commutative diagram of natural morphisms of abelian groups
\begin{equation}
\label{eqn:diagram-Groth-groups}
\begin{tikzcd}
\mathsf{K}_{\mathrm{pr}}(G) \ar[rd] \ar[r, "\sim"] & \mathsf{K}(G) \ar[r, two heads] & \mathsf{K}(G_\K) \ar[ld, "d_G"] \\
& \mathsf{K}(G_\F).
\end{tikzcd}
\end{equation}
Here, on the upper line, the left horizontal map (which is induced by the natural inclusion of categories) is an isomorphism by~\cite[Proposition~4]{serre-Groth}.  The right horizontal map (induced by the exact functor $\K \otimes_\O (-)$) is surjective by~\cite[Th\'eor\`eme~1]{serre-Groth}. The map from the top left-hand corner to the group on the bottom line is induced by the (exact) functor $\F \otimes_\O (-)$. Finally, the map $d_G$ is the ``decomposition'' morphism from~\cite[Th\'eor\`eme~2]{serre-Groth}.

The main result of this section is the following.

\begin{thm}
\label{thm:Groth-gps}
All the maps in~\eqref{eqn:diagram-Groth-groups} are isomorphisms.
\end{thm}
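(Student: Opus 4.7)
The plan is to produce, for each $A$-orbit $[\lambda, E]$ parametrizing simple $G_\F$-modules via Theorem~\ref{thm:disconn-class}, an $\O$-integral Weyl module $\Delta_\O(\lambda, E)$ whose base changes to $\K$ and $\F$ recover the Weyl modules from Section~\ref{sec:disconn-hwt}. Combined with the highest-weight structure of Theorem~\ref{thm:hw-structure}, the classes of Weyl modules form $\Z$-bases of $\mathsf{K}(G_\K)$ and $\mathsf{K}(G_\F)$ indexed by the same combinatorial set; the decomposition map $d_G$ will send one basis to the other bijectively, proving $d_G$ is an isomorphism. The remaining arrows in~\eqref{eqn:diagram-Groth-groups} are then forced to be isomorphisms by a dévissage showing that $\{[\Delta_\O(\lambda, E)]\}$ also generates $\mathsf{K}_{\mathrm{pr}}(G)$ freely.

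For the construction, since $G^\circ$ is split reductive over $\O$ (Lemma~\ref{lem:reductive-split}), each $\lambda \in \bXp$ carries a canonical $\O$-free Weyl module $\Delta_\O(\lambda)$ with the correct base-change properties. By Remark~\ref{rmk:Alambda-standards}, the twisted group algebra $\sA^\lambda$ may be defined using isomorphisms of $\Delta_\O(\lambda)$ rather than of the (nonexistent over $\O$) simple module $L(\lambda)$; equivalently, the Weyl-module variant of Proposition~\ref{prop:end-induced} identifies $\sA^\lambda$ with the $\O$-algebra
\[
\sA^\lambda_\O := \End_{G^\lambda}\bigl( \Ind_{G^\circ}^{G^\lambda} \Delta_\O(\lambda) \bigr)^{\op},
\]
which is free of rank $|A^\lambda|$, and whose $\F$- and $\K$-reductions are the twisted group algebras $\sA^\lambda_\F$ and $\sA^\lambda_\K$ of~\S\ref{ss:twisted}. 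Since $p \nmid |A|$ and a fortiori $p \nmid |A^\lambda|$, Lemma~\ref{lem:semisimple-algebra} shows $\sA^\lambda_\F$ is a product of matrix algebras. Completeness of $\O$ permits lifting a complete set of primitive orthogonal idempotents from $\sA^\lambda_\F$ to $\sA^\lambda_\O$, and base-change to $\K$ produces primitive orthogonal idempotents in $\sA^\lambda_\K$; this yields a canonical bijection between isomorphism classes of simple $\sA^\lambda_\F$- and $\sA^\lambda_\K$-modules, together with interpolating $\O$-lattices $E_\O$. Setting
\[
\Delta_\O(\lambda, E) := \Ind_{G^\lambda}^{G}\bigl( E_\O \otimes_\O \Delta_\O(\lambda) \bigr),
\]
one obtains an $\O$-free $G$-module, and the explicit form~\eqref{eqn:induced-explicit} of induction together with the isomorphism $G \simeq A \times G^\circ$ from Lemma~\ref{lem:G-product} ensures that its base changes to $\K$ and $\F$ are $\Delta_\K(\lambda, E_\K)$ and $\Delta_\F(\lambda, E_\F)$, respectively.

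The main obstacles are twofold. First, one must verify that the constructions of~\S\S\ref{ss:twisted}--\ref{ss:conj} go through cleanly over $\O$: in particular, an integral analogue of Lemma~\ref{lem:conjugation-isom} is needed so that $\Delta_\O(\lambda, E)$ depends only on the orbit $[\lambda, E]$, and one must confirm that the idempotent-lifting bijection between simple $\sA^\lambda_\F$- and $\sA^\lambda_\K$-modules respects the $A$-action on pairs. Second, there is the dévissage producing a free $\Z$-generating set of $\mathsf{K}_{\mathrm{pr}}(G)$ by the classes $\{[\Delta_\O(\lambda, E)]\}$: given an $\O$-free $G$-module $V$, one filters $\F \otimes_\O V$ by the highest-weight order on its composition factors and lifts the filtration step-by-step to $V$ using the exact sequences furnished by Weyl modules and the $\Ext^1$-vanishing of Lemma~\ref{lem:Ext1-Delta}. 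Once this free basis is in hand, each arrow in~\eqref{eqn:diagram-Groth-groups} carries a basis to a basis, so all are isomorphisms.
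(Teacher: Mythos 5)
Your strategy is essentially the same as the paper's: construct an $\O$-integral form $\sA^\lambda_\O$ of the twisted group algebra together with $\O$-lattices interpolating between $\K$- and $\F$-simple modules, induce to $G$ to obtain $\O$-integral standard modules, and then observe that $d_G$ matches Weyl-module bases term by term. The differences are cosmetic rather than conceptual. Where you phrase the integral standard module in terms of a Weyl module $\Delta_\O(\lambda)$, the paper equivalently works with a chosen $\O$-lattice $L_\O(\lambda) \subset L_\K(\lambda)$ whose reduction is $\Delta_\F(\lambda)$; and where you invoke idempotent lifting over the complete local ring $\O$, the paper cites Tits' deformation theorem (these are the same underlying mechanism). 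For the remaining arrows in~\eqref{eqn:diagram-Groth-groups}, you propose a direct d\'evissage to show the classes $[\Delta_\O(\lambda,E)]$ freely generate $\mathsf{K}_{\mathrm{pr}}(G)$, whereas the paper simply cites Serre's Proposition~4, Th\'eor\`eme~1, and Th\'eor\`eme~3 from~\cite{serre-Groth}, which together reduce the whole theorem to the statement that $d_G$ is surjective --- this is cleaner and avoids the need to lift filtrations over $\O$, which is a bit delicate (the filtration of $\F\otimes_\O V$ by composition factors need not lift to a filtration of $V$ by $\O$-free pieces in any obvious way, so your ``step-by-step lift'' would need more justification).

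Two technical points you list under ``main obstacles'' are genuinely needed and are exactly the content of Lemmas~\ref{lem:actions-coincide} and~\ref{lem:isom-LO} in the paper, so your instinct is right: (i) one must know that the two $\mathbf{A}$-actions on $\bXp$ (coming from $G_\F$ and $G_{\ovK}$) coincide --- a priori they are defined via conjugacy of Borels over different fields, and the paper settles this by noting $d_{G^\circ}$ is an $\mathbf{A}$-equivariant isomorphism; and (ii) one must verify $\End_{G^\circ}(\Delta_\O(\lambda)) = \O$ and ${}^{\iota(a)}\Delta_\O(\lambda)\cong\Delta_\O(\lambda)$ for $a\in\mathbf{A}^\lambda$, so that the alleged rank-$|\mathbf{A}^\lambda|$ freeness of $\sA^\lambda_\O$ and its correct base changes actually hold. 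The paper proves these by a derived base-change argument over $\O$. Your proposal flags these as obstacles but does not resolve them; with those two lemmas supplied, your argument does go through.
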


According to~\cite[Th\'eor\`eme~3]{serre-Groth}, if $d_G$ is surjective, then the right-hand morphism on the upper line is automatically an isomorphism. Thus, to prove Theorem~\ref{thm:Groth-gps}, it is enough to prove that $d_G$ is an isomorphism. This will be accomplished in~\S\ref{ss:invertibility} below.

\subsection{Lattices}
\label{ss:lattices}

Our starting point will be~\cite[Th\'eor\`eme~5]{serre-Groth}, which is applicable here thanks to Lemma~\ref{lem:reductive-split}. This result asserts that if we consider the diagram
\begin{equation}
\label{eqn:diagram-Groth-groups-connected}
\begin{tikzcd}
\mathsf{K}_{\mathrm{pr}}(G^\circ) \ar[rd] \ar[r, "\sim"] & \mathsf{K}(G^\circ) \ar[r, two heads] & \mathsf{K}(G^\circ_\K) \ar[ld, "d_{G^\circ}"] \\
& \mathsf{K}(G^\circ_\F)
\end{tikzcd}
\end{equation}
similar to~\eqref{eqn:diagram-Groth-groups} but for the group $G^\circ$, then the decomposition morphism $d_{G^\circ}$ is an isomorphism, so that all the maps in~\eqref{eqn:diagram-Groth-groups-connected} are isomorphisms.  The main idea of the argument is as follows: first fix a split torus $T \subset G^\circ$ and set $\bX:=X^*(T)$.  Then both $\mathsf{K}(G^\circ_\K)$ and $\mathsf{K}(G^\circ_\F)$ can be embedded in $\Z[\bX]$ by taking characters, and $d_{G^\circ}$ is characterized by the property that it preserves characters.

Let us delve a bit further into the details of the behavior of $d_{G^\circ}$.  Choosing a system of positive roots in the root system of $(G^\circ,T)$, we obtain a Borel subgroup $B \subset G^\circ$ containing $T$ (chosen such that $B$ is the negative Borel subgroup), and a subset $\bX^+ \subset \bX$ of dominant weights. 

By the well-known representation theory of connected reductive groups over algebraically closed fields, both the set of isomorphism classes of simple $G^\circ_{\overline{\K}}$-modules and the set of isomorphism classes of simple $G^\circ_{\overline{\F}}$-modules are in bijection with $\bX^+$. More concretely, if $\lambda \in \bX^+$ and if $L_{\overline{\K}}(\lambda)$ is a simple $G^\circ_{\overline{\K}}$-module of highest weight $\lambda$, then there exists a  simple $G^\circ_{\K}$-module $L_\K(\lambda)$ and an isomorphism $\overline{\K} \otimes_\K L_\K(\lambda) \cong L_{\overline{\K}}(\lambda)$.  Moreover, $L_\K(\lambda)$ is unique up to isomorphism (which justifies the notation), and every simple $G^\circ_\K$-module is of this form. (See e.g.~\cite[\S 3.6]{serre-Groth} or~\cite[Corollary~II.2.9]{jantzen} for details.)

There is a similar description of simple $G^\circ_\F$-modules. Note also that the Weyl and dual Weyl $G^\circ_{\overline{\F}}$-modules have obvious $\F$-versions, that will be denoted $\Delta_\F(\lambda)$ and $\nabla_\F(\lambda)$ respectively.

Next, if $V_\O \subset L_\K(\lambda)$ is a $G^\circ$-stable $\O$-lattice, then the class of $\F \otimes_\O V_\O$ in $\mathsf{K}(G^\circ_\F)$ coincides with the class of the Weyl module $\Delta_{\F}(\lambda)$ of highest weight $\lambda$ (because $L_{\overline{\K}}(\lambda)$ and $\Delta_{\overline{\F}}(\lambda)$ have the same character). In fact, it is well known that the lattice $V_\O$ can be chosen in such a way that $\F \otimes_\O V_\O \cong \Delta_\F(\lambda)$ as $G^\circ_\F$-modules. For each $\lambda$ we will fix such a lattice, and denote it by $L_\O(\lambda)$.  To summarize, we have
\[
d_{G^\circ}([L_\K(\lambda)]) = [\Delta_\F(\lambda)].
\]

In the present setting, $\mathbf{A}$ is the group of components both of $G_{\overline{\F}}$ and of $G_{\overline{\K}}$. Identifying $T_{\overline{\F}}$ and $T_{\overline{\K}}$ with the universal maximal tori of $G^\circ_{\overline{\F}}$ and $G_{\overline{\K}}^\circ$ respectively (via the choice of Borel subgroups obtained from $B$ by base change), we obtain two actions of $\mathbf{A}$ on $\bX=X^*(T_{\overline{\F}})=X^*(T_{\overline{\K}})$, see~\S\ref{ss:disconn}. The description of this action involves the property that Borel subgroups are conjugate, which is not true over $\O$; so it is not clear from the definition that they must coincide. In the next lemma we will show that they do at least coincide on $\bX^+$.

\begin{lem}
\label{lem:actions-coincide}
The two actions of $\mathbf{A}$ on $\bX$ agree on $\bX^+$.
\end{lem}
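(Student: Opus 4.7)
The plan is to use an $\O$-lattice as a bridge between the two actions. Fix $a \in \mathbf{A}$ and $\lambda \in \bX^+$, and write $\mu_\F$ (resp.~$\mu_{\overline{\K}}$) for the image of $\lambda$ under the $\mathbf{A}$-action on $\bX$ defined over $\F$ (resp.~over $\overline{\K}$). The goal is to show $\mu_\F = \mu_{\overline{\K}}$. Both elements lie in $\bX^+$ since each action preserves $\bX^+$, as noted after~\eqref{eqn:combinatorial-action}.

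The key object is the twisted module ${}^{\iota(a)} L_\O(\lambda)$. Since $\iota(a) \in G(\O)$ and $G^\circ$ is normal in $G$, conjugation by $\iota(a)$ defines an automorphism of the $\O$-group scheme $G^\circ$, and the construction of~\S\ref{app:twist} applies over $\O$: the $\O$-module $L_\O(\lambda)$ with the twisted $G^\circ$-action is again a $G^\circ$-module, free of finite rank over $\O$. This construction is functorial in the base, so after extending scalars to $\F$ we obtain ${}^{\iota(a)_\F} (\F \otimes_\O L_\O(\lambda)) \cong {}^{\iota(a)_\F} \Delta_\F(\lambda)$, and after extending to $\overline{\K}$ we obtain ${}^{\iota(a)_{\overline{\K}}} L_{\overline{\K}}(\lambda)$. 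By~\eqref{eqn:twist-standard} applied to $G_\F$ and $G_{\overline{\K}}$, these are isomorphic to $\Delta_\F(\mu_\F)$ and $L_{\overline{\K}}(\mu_{\overline{\K}})$ respectively.

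Since $T$ is split over $\O$, the lattice ${}^{\iota(a)} L_\O(\lambda)$ (restricted to $T$ via the twisted $G^\circ$-action) decomposes as a direct sum of weight subspaces over $\bX$, each a direct summand over $\O$ of a definite rank. Its formal $T$-character in $\Z[\bX]$ is then preserved by the flat base changes to $\F$ and to $\overline{\K}$. Consequently the character of $\Delta_\F(\mu_\F)$ as a $T_\F$-module equals the character of $L_{\overline{\K}}(\mu_{\overline{\K}})$ as a $T_{\overline{\K}}$-module, both identified as elements of $\Z[\bX]$. But each of these is the Weyl character of the respective dominant weight (by Weyl's character formula in characteristic $0$, and by the definition of the Weyl module in characteristic $p$). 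Since the Weyl characters of distinct dominant weights are linearly independent in $\Z[\bX]$, we conclude $\mu_\F = \mu_{\overline{\K}}$.

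The only real point to check is that the twist ${}^{\iota(a)}L_\O(\lambda)$ makes sense over $\O$ and base-changes to the two expected objects; this is automatic from the functoriality of the twist construction (\S\ref{app:twist}), since $\iota(a) \in G(\O)$ defines an automorphism of the $\O$-group scheme $G^\circ$ whose reductions are exactly $\iota(a)_\F$ and $\iota(a)_{\overline{\K}}$. Everything else is standard character theory for split reductive group schemes.
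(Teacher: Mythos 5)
Your proposal is correct, and it rests on the same underlying observation as the paper's proof: the lattice $L_\O(\lambda)$ together with the section $\iota\colon\mathbf{A}\to G(\O)$ lets one define the twist ${}^{\iota(a)}L_\O(\lambda)$ over $\O$, and base change then interpolates between the characteristic-$0$ and characteristic-$p$ pictures. The paper packages this via the $\mathbf{A}$-equivariance of the decomposition map $d_{G^\circ}$ in diagram~\eqref{eqn:diagram-Groth-groups-connected} and then compares classes $[\Delta_\F(-)]$ in $\mathsf{K}(G^\circ_\F)$, whereas you unpack the same comparison directly in terms of formal $T$-characters in $\Z[\bX]$ and invoke linear independence of Weyl characters; since taking $T$-characters is precisely the embedding $\mathsf{K}(G^\circ_\F)\hookrightarrow \Z[\bX]^W$, the two arguments are essentially equivalent, with yours being somewhat more self-contained in that it does not explicitly appeal to the Grothendieck-group formalism of~\cite{serre-Groth}.
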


\begin{proof}
Let us provisionally denote the two actions of $\mathbf{A}$ on $\bX$ by $\cdot_{\overline{\F}}$ and $\cdot_{\overline{\K}}$. Since $\mathbf{A}$ acts by algebraic group automorphisms on $G^\circ$, this group acts on all the Grothen\-dieck groups in~\eqref{eqn:diagram-Groth-groups-connected}, and all the maps in this diagram are obviously $\mathbf{A}$-equivariant. Now for $\lambda \in \bX^+$ we have $a \cdot [L_\K(\lambda)] = [L_\K(a \cdot_{\overline{\K}} \lambda)]$, hence
\[
d_{G^\circ}(a \cdot [L_\K(\lambda)]) = [\Delta_\F(a \cdot_{\overline{\K}} \lambda)].
\]
On the other hand, we have $a \cdot [\Delta_\F(\lambda)] = [\Delta_\F(a \cdot_{\overline{\F}} \lambda)]$. Since $d_{G^\circ}$ is $\mathbf{A}$-equivariant, it follows that
\[
d_{G^\circ}(a \cdot [L_\K(\lambda)]) = a \cdot [\Delta_\F(\lambda)] = [\Delta_\F(a \cdot_{\overline{\F}} \lambda)]
\]
(see~\eqref{eqn:twist-standard}). We deduce that $[\Delta_\F(a \cdot_{\overline{\K}} \lambda)] = [\Delta_\F(a \cdot_{\overline{\F}} \lambda)]$, hence that $a \cdot_{\overline{\K}} \lambda = a \cdot_{\overline{\F}} \lambda$.
\end{proof}

From now on we fix $\lambda \in \bX^+$.
It follows in particular from Lemma~\ref{lem:actions-coincide} that the two possible definitions of the subgroup $\mathbf{A}^\lambda \subset \mathbf{A}$ (see~\S\ref{ss:twisted}) coincide.

\begin{lem}\phantomsection
\label{lem:isom-LO}
\begin{enumerate}
\item
\label{it:End-LO}
We have $\End_{G^\circ}(L_\O(\lambda)) = \O$.
\item
For any $a \in \mathbf{A}^\lambda$, there exists an isomorphism of $G^\circ$-modules
\[
\label{it:Hom-LO}
{}^{\iota(a)} \hspace{-1pt} L_\O(\lambda) \cong L_\O(\lambda).
\]
\end{enumerate}
\end{lem}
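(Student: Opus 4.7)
The plan is to exploit two facts: (i) $L_\K(\lambda)$ is \emph{absolutely} simple, since by construction $\overline{\K} \otimes_\K L_\K(\lambda) \cong L_{\overline{\K}}(\lambda)$ is simple, so by Schur's lemma $\End_{G^\circ_\K}(L_\K(\lambda)) = \K$; and (ii) the Weyl module $\Delta_\F(\lambda)$ is generated by its one-dimensional highest-weight space, so $\End_{G^\circ_\F}(\Delta_\F(\lambda)) = \F$. The argument then combines an ``integral Schur'' argument with Nakayama's lemma.

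For part~(1): Since $L_\O(\lambda)$ is $\O$-torsion free, the base-change map
\[
\End_{G^\circ}(L_\O(\lambda)) \hookrightarrow \End_{G^\circ_\K}(L_\K(\lambda)) = \K
\]
is injective, and it realizes the left-hand side as the $\O$-submodule of those scalars $c \in \K$ satisfying $c \cdot L_\O(\lambda) \subseteq L_\O(\lambda)$. Picking any $v \in L_\O(\lambda) \setminus \pi L_\O(\lambda)$ (with $\pi$ a uniformizer) shows that this submodule is exactly $\O$.

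For part~(2): By Lemma~\ref{lem:actions-coincide}, $a \in \mathbf{A}^\lambda$ implies $a \cdot \lambda = \lambda$ in $\bX^+$, so~\eqref{eqn:twist-standard} applied over $\K$ furnishes a $G^\circ_\K$-isomorphism $\psi : L_\K(\lambda) \xrightarrow{\sim} {}^{\iota(a)} L_\K(\lambda)$. Both $\psi(L_\O(\lambda))$ and ${}^{\iota(a)} L_\O(\lambda)$ are $G^\circ$-stable $\O$-lattices in ${}^{\iota(a)} L_\K(\lambda)$, so after multiplying $\psi$ by an appropriate power of $\pi$ I may arrange
\[
\psi(L_\O(\lambda)) \subseteq {}^{\iota(a)} L_\O(\lambda) \quad\text{but}\quad \psi(L_\O(\lambda)) \not\subseteq \pi \cdot {}^{\iota(a)} L_\O(\lambda).
\]
The restriction $\phi := \psi|_{L_\O(\lambda)}$ is then a morphism of $G^\circ$-modules over $\O$ whose reduction $\bar\phi : \Delta_\F(\lambda) \to {}^{\iota(a)} \Delta_\F(\lambda)$ is nonzero. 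Now~\eqref{eqn:twist-standard} over $\F$ gives ${}^{\iota(a)} \Delta_\F(\lambda) \cong \Delta_\F(\lambda)$, and combined with $\End_{G^\circ_\F}(\Delta_\F(\lambda)) = \F$ this forces $\bar\phi$ to be an isomorphism. By Nakayama's lemma the cokernel of $\phi$ vanishes, so $\phi$ is surjective, and equality of $\O$-ranks then makes it an isomorphism.

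The main obstacle is organizational rather than conceptual: one must carefully keep track of which $G^\circ$-action and which $\O$-lattice is being considered on the common underlying $\K$-vector space ${}^{\iota(a)} L_\K(\lambda) = L_\K(\lambda)$. Beyond this bookkeeping, no deep input is needed beyond absolute simplicity and Nakayama.
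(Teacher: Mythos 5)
Your proof is correct, but it takes a genuinely different route from the paper. The paper computes the full module $R\Hom_{G^\circ}(L_\O(\lambda),{}^{\iota(a)} L_\O(\lambda))$ in the derived category of $\O$-modules, applies derived base change to $\F$ and to $\overline{\K}$ (citing~\cite[Prop.\ A.6, A.8]{mr}), and deduces from the resulting identifications that $\Hom_{G^\circ}(L_\O(\lambda),{}^{\iota(a)} L_\O(\lambda)) \cong \O$, from which the isomorphism follows. Your approach sidesteps the derived machinery entirely: for part~(1) you use the injection $\End_{G^\circ}(L_\O(\lambda)) \hookrightarrow \End_{G^\circ_\K}(L_\K(\lambda)) = \K$ (absolute simplicity) together with lattice-preservation to pin down the endomorphism ring; for part~(2) you start from a rational isomorphism $\psi$, normalize it so that $\phi := \psi|_{L_\O(\lambda)}$ lands in ${}^{\iota(a)} L_\O(\lambda)$ with nonzero reduction, use $\End_{G^\circ_\F}(\Delta_\F(\lambda)) = \F$ (the same key $\F$-side input the paper relies on) to conclude $\bar\phi$ is an isomorphism, and finish with Nakayama. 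The trade-off is that the paper's argument computes $\Hom_{G^\circ}(L_\O(\lambda),{}^{\iota(a)} L_\O(\lambda))$ as an $\O$-module in one stroke and treats both parts uniformly, while your argument is more elementary and does not require the derived base-change results from~\cite{mr}. One small point of care: when you invoke ``\eqref{eqn:twist-standard} over $\K$,'' note that the formula in the paper is stated for an algebraically closed base field; it applies here because $G^\circ_\K$ is split (Lemma~\ref{lem:reductive-split}), so the highest-weight theory over $\K$ works just as over $\overline{\K}$, together with Lemma~\ref{lem:actions-coincide} to identify the two $\mathbf{A}$-actions on $\bXp$. It would be worth saying this explicitly.
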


\begin{proof}
We only explain the proof of~\eqref{it:Hom-LO}; the proof of~\eqref{it:End-LO} is similar.
Consider the object
\[
R\Hom_{G^\circ}(L_\O(\lambda),{}^{\iota(a)} \hspace{-1pt} L_\O(\lambda))
\]
of the derived category of $\O$-modules.
By~\cite[Lemma~II.B.5 and its proof]{jantzen}, this complex has bounded cohomology, and each of its cohomology objects is finitely generated. This implies that it is isomorphic (in the derived category of $\O$-modules) to a finite direct sum of shifts of finitely generated $\O$-modules.

It follows from~\cite[Proposition~A.6 and Proposition~A.8]{mr} that we have
\begin{align*}
\overline{\F} \lotimes_\O R\Hom_{G^\circ}(L_\O(\lambda),{}^{\iota(a)} \hspace{-1pt} L_\O(\lambda)) &\cong R\Hom_{G^\circ_{\overline{\F}}}(\Delta_{\overline{\F}}(\lambda), \Delta_{\overline{\F}}(\lambda)), \\
\overline{\K} \lotimes_\O R\Hom_{G^\circ}(L_\O(\lambda),{}^{\iota(a)} \hspace{-1pt} L_\O(\lambda)) &\cong R\Hom_{G^\circ_{\overline{\K}}}(L_{\overline{\K}}(\lambda), L_{\overline{\K}}(\lambda)).
\end{align*}
Now we have $R\Hom_{G^\circ_{\overline{\K}}}(L_{\overline{\K}}(\lambda), L_{\overline{\K}}(\lambda)) \cong \overline{\K}$, so that $\Hom_{G^\circ}(L_\O(\lambda), {}^{\iota(a)} \hspace{-1pt} L_\O(\lambda))$ is a sum of $\O$ and a torsion module. But since $\Hom_{G^\circ_{\overline{\F}}}(\Delta_{\overline{\F}}(\lambda),\Delta_{\overline{\F}}(\lambda))={\overline{\F}}$, this torsion module is zero; in other words we have $\Hom_{G^\circ}(L_\O(\lambda), {}^{\iota(a)} \hspace{-1pt}L_\O(\lambda)) \cong \O$. If $f : L_\O(\lambda) \to {}^{\iota(a)} \hspace{-1pt} L_\O(\lambda)$ is a generator of this rank-$1$ $\O$-module, the $G_{{\overline{\F}}}$-module morphism ${\overline{\F}} \otimes_\O f$ is an isomorphism, so that $f$ is also an isomorphism.
\end{proof}

\subsection{Comparison of twisted group algebras}
\label{ss:comparison}

We continue with the setting of~\S\ref{ss:lattices} (and in particular with our fixed $\lambda \in \bX^+$).

By Lemma~\ref{lem:isom-LO} we can choose, for any $a \in \mathbf{A}^\lambda$, an isomorphism $\theta_a : L_\O(\lambda) \simto {}^{\iota(a)} \hspace{-1pt} L_\O(\lambda)$. Then $\overline{\K} \otimes_\O \theta_a$ is an isomorphism from $L_{\overline{\K}}(\lambda)$ to ${}^{\iota(a)} \hspace{-1pt} L_{\overline{\K}}(\lambda)$, and for $a,b \in \mathbf{A}^\lambda$ the scalar $\alpha(a,b) \in \overline{\K}$ defined in~\S\ref{ss:twisted} using these isomorphisms in fact belongs to $\O^\times$. In particular, if $\mathscr{A}^\lambda_{\overline{\K}}$ is the associated twisted group algebra (over $\overline{\K}$), then the $\O$-lattice $\mathscr{A}^\lambda_\O := \bigoplus_{a \in A^\lambda} \O \cdot \rho_a$ is an $\O$-subalgebra in $\mathscr{A}^\lambda_{\overline{\K}}$. On the other hand, ${\overline{\F}} \otimes_\O \theta_a$ is an isomorphism from $\Delta_{\overline{\F}}(\lambda)$ to ${}^{\iota(a)} \hspace{-1pt} \Delta_{\overline{\F}}(\lambda)$, and by Remark~\ref{rmk:Alambda-standards} the algebra $\mathscr{A}^\lambda_{\overline{\F}}$ from~\S\ref{ss:twisted} (now for the group $G_{\overline{\F}}$ and its simple module $L_{\overline{\F}}(\lambda)$) can be described as the twisted group algebra of $\mathbf{A}^\lambda$ defined by the cocyle sending $(a,b)$ to the image of $\alpha(a,b)$ in ${\overline{\F}}$.

Summarizing, we have obtained an $\O$-algebra $\mathscr{A}^\lambda_\O$ which is free over $\O$ and such that
\[
\overline{\K} \otimes_\O \mathscr{A}^\lambda_\O \cong \mathscr{A}^\lambda_{\overline{\K}}, \quad {\overline{\F}} \otimes_\O \mathscr{A}^\lambda_\O \cong \mathscr{A}^\lambda_{\overline{\F}}.
\]
From Lemma~\ref{lem:semisimple-algebra} we know that $\mathscr{A}^\lambda_{\overline{\F}}$ and $\mathscr{A}^\lambda_{\overline{\K}}$ are products of matrix algebras (over ${\overline{\F}}$ and $\overline{\K}$ respectively). In fact, the same arguments show that $\mathscr{A}^\lambda_\F:=\F \otimes_\O \mathscr{A}^\lambda_\O$ and $\mathscr{A}^\lambda_\K:=\K \otimes_\O \mathscr{A}^\lambda_\O$ are also products of matrix algebras (over $\F$ and $\K$ respectively). Hence we are in the setting of Tits' deformation theorem (see e.g.~\cite[Theorem~7.4.6]{geck-pfeiffer}), and we deduce that we have
a canonical bijection between the sets of isomorphism classes of simple $\mathscr{A}^\lambda_\K$-modules and isomorphism classes of simple $\mathscr{A}^\lambda_\F$-modules, which sends a simple module $M$ to $\F \otimes_\O M_\O$, where $M_\O$ is any $\mathscr{A}^\lambda_\O$-stable $\O$-lattice in $M$.

If $E$ be a finite-dimensional $\mathscr{A}^\lambda_\K$-module, the same procedure as in~\S\ref{ss:lambda-E} allows us to define a $G^\lambda_\K$-module structure on $E \otimes_\K L_\K(\lambda)$, where $G^\lambda_\K$ is the inverse image of $\mathbf{A}^\lambda$ under the map $G_\K \to A$ induced by $\varpi$. Similarly, copying the definitions in Lemma~\ref{lem:Delta-disconn} and Proposition~\ref{prop:standard-disconn}, if $E'$ be a finite-dimensional $\mathscr{A}^\lambda_\F$-module, then we can consider the $G_\F$-module $\Delta_\F(\lambda,E')$, which is an $\F$-form of $\Delta_{\overline{\F}}(\lambda,\overline{\F} \otimes_\F E')$.

\begin{lem}
\label{lem:d-simples}
Let $E$ be a simple $\mathscr{A}^\lambda_\K$-module, and let $\tilde{E}$ be the simple $\mathscr{A}^\lambda_\F$-module corresponding to $E$ under the bijection above. Then we have
\[
d_G([\Ind_{G^\lambda_\K}^{G_\K}(E \otimes_\K L_\K(\lambda))]) = [\Delta_\F(\lambda,\tilde{E})].
\]
\end{lem}

\begin{proof}
If $E_\O \subset E$ is an $\mathscr{A}^\lambda_\O$-stable $\O$-lattice in $E$, then $E_\O \otimes_\O L_\O(\lambda)$ has a natural structure of $G^\lambda$-module (where $G^\lambda=\varpi^{-1}(\mathbf{A}^\lambda)$), and is a $G^\lambda$-stable $\O$-lattice in $E \otimes_\K L_\K(\lambda)$. Inducing to $G$, we deduce that $\Ind_{G^\lambda}^{G}(E_\O \otimes_\O L_\O(\lambda))$ is a $G$-stable $\O$-lattice in $\Ind_{G^\lambda_\K}^{G_\K}(E \otimes_\K L_\K(\lambda))$, whose modular reduction is $\Delta_\F(\lambda,\tilde{E})$.
\end{proof}

\subsection{Invertibility of \texorpdfstring{$d_G$}{dG}}
\label{ss:invertibility}

We can now prove that $d_G$ is an isomorphism, which will finish the proof of Theorem~\ref{thm:Groth-gps}.

In fact, for any $\lambda \in \bX^+$, since $\mathscr{A}^\lambda_\K$ is a product of matrix algebras the assignment $E \mapsto \overline{\K} \otimes_\K E$ induces a bijection between the sets of isomorphism classes of simple modules for the algebras $\mathscr{A}^\lambda_\K$ and $\mathscr{A}^\lambda_{\overline{\K}}$ from~\S\ref{ss:comparison}. Then, using Theorem~\ref{thm:disconn-class} and arguing as in~\cite[\S 3.6]{serre-Groth}, we see that the similar operation induces a bijection between the sets of isomorphism classes of simple $G_\K$-modules and of simple $G_{\overline{\K}}$-modules.

The same construction gives a bijection between the sets of isomorphism classes of simple $G_\F$-modules and of simple $G_{\overline{\F}}$-modules.

Let us now fix a subset $\Lambda \subset \bX^+$ of representatives for the $\mathbf{A}$-orbits on $\bX^+$.
By the remarks above, the classes of the modules $\Ind_{G^\lambda_\K}^{G_\K}(E \otimes L_\K(\lambda))$, where $(\lambda,E)$ runs over the pairs consisting of an element $\lambda \in \Lambda$ and a simple $\mathscr{A}^\lambda_\K$-module, form a basis of $\mathsf{K}(G_\K)$ (see in particular Remark~\ref{rmk:classification-simples}\eqref{it:rmks-representatives}). In view of Lemma~\ref{lem:d-simples}, Theorem~\ref{thm:hw-structure}, and the preceding paragraph, the image of this basis under $d_G$ is a basis of $\mathsf{K}(G_\F)$. Hence, $d_G$ is indeed an isomorphism.


\end{document}